  \newtheorem{theorem}{Theorem}[section]
  \newtheorem{lemma}[theorem]{Lemma}
  \newtheorem{proposition}[theorem]{Proposition}
  \newtheorem{corollary}[theorem]{Corollary}
  \newtheorem{definition}[theorem]{Definition}
  \newtheorem{remark}[theorem]{Remark}
\numberwithin{equation}{section}
\newcommand{\PPln}{\mathcal{P}^{{\log}}}
\newcommand{\Div}{\divergence}
\newcommand{\ep}{\bfvarepsilon}
\newcommand{\R}{\mathbb R}
\newcommand{\N}{\mathbb N}
\newcommand{\E}{\mathbb E}
\newcommand{\p}{\mathbb P}
\newcommand{\Q}{Q}
\newcommand{\F}{\mathcal F}
\newcommand{\q}{\overline{q}}
\newcommand{\dd}{\mathrm{d}}
\newcommand{\dx}{\,\mathrm{d}x}
\newcommand{\dt}{\,\mathrm{d}t}
\newcommand{\dxt}{\,\mathrm{d}x\,\mathrm{d}t}
\newcommand{\ds}{\,\mathrm{d}\sigma}
\newcommand{\dxs}{\,\mathrm{d}x\,\mathrm{d}\sigma}
\DeclareMathOperator{\divergence}{div}
\newcommand{\bfz}{{\bold{z}}}
\newcommand{\bfE}{{\bold{E}}}
\newcommand{\bfv}{{\bold{v}}}
\newcommand{\bfS}{{\bold{S}}}
\newcommand{\bff}{{\bold{f}}}
\newcommand{\bfP}{{\bold{P}}}
\newcommand{\bfvarepsilon}{{\boldsymbol{\varepsilon}}}
\newcommand{\bfphi}{{\boldsymbol{\varphi}}}
\newcommand{\bfvarphi}{{\boldsymbol{\varphi}}}
\newcommand{\bfxi}{{\boldsymbol{\xi}}}
\newcommand{\bfu}{{\bold{u}}}
\newcommand{\bfg}{{\bold{g}}}
\newcommand{\bfw}{{\bold{w}}}
\newcommand{\bfa}{{\bold{a}}}
\newcommand{\bfb}{{\bold{b}}}
\newcommand{\bfF}{{\bold{F}}}
\newcommand{\bfC}{{\bold{C}}}
\newcommand{\bfW}{{\bold{W}}}
\newcommand{\bfX}{{\bold{X}}}
\newcommand{\bfeta}{{\boldsymbol{\eta}}}
\newcommand{\bfomega}{{\boldsymbol{\omega}}}
\newcommand{\bfbeta}{{\boldsymbol{\beta}}}
\newcommand{\bfe}{{\bold{e}}}
\newcommand{\bfSigma}{{\boldsymbol{\Sigma}}}
\newcommand{\bfmu}{{\boldsymbol{\mu}}}
\newcommand{\dif}{\mathrm{d}}
\newcommand{\mf}{\mathscr{F}}
\newcommand{\prst}{\mathbb{P}}
\newcommand{\mn}{\mathbb{N}}
\newcommand{\mt}{\mathbb{T}^n}
\renewcommand{\dif}{\operatorname{d}\!}
\newcommand{\lebe}{\operatorname{L}}
\newcommand{\sobo}{\operatorname{W}}
\newcommand{\tn}{\mathbb{T}^{n}}
\newcommand{\e}{\operatorname{e}}
\newcommand{\imag}{\operatorname{i}}
\newcommand{\hold}{C}
\newcommand{\vv}{\mathbf{v}}
\newcommand{\ff}{\mathbf{f}}
\newcommand{\uu}{\mathbf{u}}
\newcommand{\db}[1]{\textcolor{black}{  #1}}
\begin{document}


\title[Electro-rheological fluids: martingale and strong solutions]{Electro-rheological fluids under random influences: martingale and strong solutions}

\author[D. Breit]{Dominic Breit}
\address[\textsc{D. Breit}]{Department of Mathematics, Heriot-Watt University, Riccarton Edinburgh EH14 4AS, UK}
\email{d.breit@hw.ac.uk}

\author[F. Gmeineder]{Franz Gmeineder}
\address[\textsc{F. Gmeineder}]{Universit\"{a}t Bonn, Mathematisches Institut, Endenicher Allee 60, Bonn, Germany}
\email{fgmeined@math.uni-bonn.de}

\begin{abstract}
We study generalised Navier--Stokes equations governing the motion of an electro-rheological fluid subject to
stochastic perturbation. Stochastic effects are implemented through (i) random initial data, (ii) a forcing term in the momentum equation represented by a
multiplicative white noise and (iii) a random character of the variable exponent $p=p(\omega,t,x)$ (as a result of a random electric field). We show the existence of a weak martingale solution provided the variable exponent satisfies $p\geq p^->\frac{3n}{n+2}$ ($p^->1$ in two dimensions). Under additional assumptions we obtain also stochastically strong solutions. 
\end{abstract}

\subjclass[2010]{60H15, 35R60, 76D03,  35Q30}
\keywords{Electro-rheological fluids, stochastic Navier-Stokes equations, martingale solution, pathwise solution}

\date{\today}

\maketitle
\setcounter{tocdepth}{1}


\section{Introduction}

Electro-rheological fluids are special smart fluids
which change their material properties due to the application of an
electric field firstly observed by Winslow \cite{Wi} in 1949. Since then a vast
development in the chemical constitution of electro-rheological fluids has taken place
and nowadays dramatic changes by a
factor of $10^3$ in 1ms in the viscosity are possible.
This provides the opportunity for the gainful exploitation of this fact in technological applications for instance in clutches, shock absorbers, valves, actuators and exercise equipment.

The simplest approach for the
modelling of such suspensions is to treat them in a homogenised sense within the framework of continuum
mechanics and in this respect, we restrict ourself to incompressible fluids with density $\varrho>0$. The conservation
of mass and the balance of linear momentum are given by
\begin{align}\label{0.1}
\left\{\begin{array}{rc}
\partial_t (\varrho\bfv)-\Div \bfS=-\Div(\varrho\bfv\otimes\bfv)-\nabla \pi+\varrho\bff+\bff_e& \mbox{in $Q$,}\\
\Div \bfv=0\qquad\qquad\qquad\,\,\,\,& \mbox{in $Q$},
\end{array}\right.
\end{align}
where $Q=(0,T)\times\mathcal O$ denotes the parabolic cylinder ($\mathcal O$ is a bounded domain in $\R^n$, $n=2,3$) and $\otimes$ is the tensor product in $\R^n$ (that is we have $\bfa\otimes\bfb=\bfa \bfb^T$ for $\bfa,\bfb\in\R^n$). Here $\bfv:Q\rightarrow\mathbb{R}^n$ is the velocity field, $\pi:Q\rightarrow\mathbb{R}$ the pressure, $\bfS:Q\rightarrow \mathbb{R}^{n\times n}$ the viscous stress tensor whereas $\bff:Q\rightarrow \R^n$ is the external mechanical body force and $\bff_e:Q\rightarrow \R^n$ the electromagnetic force. The material properties of an electro-rheological  fluid - according to Rajagopal and R{\r u}{\v z}i{\v c}ka \cite{RaRu1,RaRu2} - are described by the relation 
\begin{align}
\bfS&=\alpha_{21}\big((1+|\ep(\bfv)|^2)^{\frac{p-1}{2}}-1\big)\bfE\otimes\bfE+(\alpha_{31}+\alpha_{33}|\bfE|^2)(1+|\ep(\bfv)|^2)^{\frac{p-2}{2}}\ep(\bfv)\nonumber\\
&+\alpha_{51}(1+|\ep(\bfv)|^2)^{\frac{p-2}{2}}(\bfE\otimes\ep(\bfv)\bfE+\bfE\otimes\ep(\bfv)\bfE).
\label{0.2}
\end{align}
Here $\bfE:Q\rightarrow\R^n$ is the electric field (which is
a solution to the quasi-static Maxwell equations and is not
influenced by the motion of the fluid), $\ep(\bfv)=\tfrac{1}{2}\big(\nabla\bfv+\nabla\bfv^T\big)$ the symmetric gradient of the velocity field and $\alpha_{ij}$ are material constants. The exponent $p=p(|\bfE|^2)$ depends on the strength of the electric field (and hence on time and space) and satisfies in $Q$
\begin{align}\label{0.2b}
1<p^{-}\leq  p\leq p^+<\infty.
\end{align}
In the mathematical literature about electro-rheological fluids (starting with \cite{Ru} and \cite{Di}) it is common to study the constitutive law
\begin{align} 
  \label{0.3a}
  \bfS=\mu(1+|\ep(\bfv)|)^{p(\cdot)-2}\ep(\bfv),\quad\mu>0,
\end{align}
which contains the same mathematical difficulties as \eqref{0.2} but simplifies the calculations.
Essentially, there are two parts in the model where randomness can occur:
\begin{itemize}
\item The electromagnetic force is mainly influenced by the gradient of the electric field $\bfE$ and the electric polarization $\bfP$ such that $\bff_e=[\nabla\bfE]\bfP$.
All missing quantities which are neglected here (for instance magnetic field and magnetic polarization) can be summarized in some random perturbation. In addition, it can incorporate physical uncertainties and turbulence in the fluid motion.
\item The exponent $p$ depends of the strength of the electric field which is a solution to Maxwell's equation, the latter having been widely studied in literature.
Randomness naturally appears in the Maxwell equation (see, for instance \cite{CHL1,CHL2} for stochastic Maxwell equations), and the randomness in the Maxwell equation transfers to randomness in the exponent in the model \eqref{0.3}.  In conclusion, the assumption of a random exponent is very reasonable and required by applications.
\end{itemize}
Second, it is not possible to give an explicit formula for the exponent $p$. Its dependence on the electric field has to be determined via physical experiments. Hence some (random) derivation from the ``real'' exponent is expected.

In this respect, the aim of this paper is to give a rigorous analysis of the following stochastic model for electro-rheological fluids (without loss of generality we assume that $\varrho=1$ and $\bff_e=0$)
\begin{align}\label{0.4}
\left\{\begin{array}{rc}
\dd\bfv=\Div\bfS\dt-\Div(\bfv\otimes\bfv)\dt-\nabla\pi\dt+\bff\dt+\Phi(\bfv)\dd W
& \mbox{in $Q$,}\\
\Div \bfv=0\qquad\qquad\qquad\qquad\qquad\,\,\,\,& \mbox{in $Q$,}\\
\bfv(0)=\bfv_0\,\qquad\qquad\qquad\qquad\qquad&\mbox{ \,in $\mathcal O$,}\end{array}\right.
\end{align}
with $\bfS$ given by 
\begin{align} 
  \label{0.3}
  \bfS=\mu(1+|\ep(\bfv)|)^{p(\cdot)-2}\ep(\bfv),\quad\mu>0.
\end{align}
We suppose that the electric field $\bfE$ is given and that $p=p(\omega,t,x)$ satisfies \eqref{0.2b}. The quantity $W$ denotes a cylindrical Wiener process with values in some Hilbert space and $\Phi$ is nonlinear in $\bfv$ with linear growth, cp.~Section \ref{Probability Setup} for further details.

In the general three-dimensional case, regularity and uniqueness of solutions to \eqref{0.4}--\eqref{0.3}
is a longstanding open problem (already in the deterministic situation) even if $p\equiv2$, leading to the classical Navier--Stokes equations for Newtonian fluids. Consequently, the solution is understood weakly in space-time (in the sense of distributions) and also weakly in the probabilistic sense (i.e., the underlying probability space is part of the solution). This concept of stochastically weak solutions already appears on the level of stochastic ODEs if uniqueness fails.

As far as stochastic PDEs are concerned, a milestone was the existence
of martingale solutions to the stochastic Navier--Stokes equation (\eqref{0.4}--\eqref{0.3} with $p\equiv2$) by Flandoli-Gatarek \cite{FlGa}. Today there exists an abundant amount of literature concerning the dynamics of incompressible Newtonian fluids driven by stochastic forcing. We refer to the lecture notes by Flandoli \cite{Fl}, the monograph of Kuksin and
Shyrikian \cite{KukShi}, the survey by Romito \cite{Ro} as well as the references cited therein for a recent overview. Much less is known if other fluid types are concerned. Just very recently, an analysis of non-Newtonian fluids (see \cite{Br2,TeYo,Yo}) and compressible fluids (see \cite{BrHo} and \cite{2015arXiv150400951S}) subject to stochastic forcing started.

The analysis the system \eqref{0.4}--\eqref{0.3} brings a completely new aspect into play: a random variable exponent. As a consequence, solutions are located in a random function space generated by the a priori information
\begin{align*}
\E\left[\int_Q|\ep(\bfv)(\omega,t,x)|^{p(\omega,t,x)}\dxt\right]<\infty.
\end{align*}
So, we have
\begin{align*}
\ep(\bfv)(\omega,\cdot)\in \lebe^{p(\omega,\cdot)}(Q)\quad \text{for $\p$-a.e. }\omega\in\Omega,
\end{align*}
where
\begin{align}\label{eq:Lpx}
  \lebe^{p(\cdot)}(G)=\Big\{f\in
    \lebe^1(G):\,\,\int_{G}|f(y)|^{p(y)}\,\dd y < \infty\Big\}
\end{align}
for $G\subset \mathbb R^m$ and $p:G\rightarrow [1,\infty)$ measurable. Variable exponent Lebesgue spaces (and Sobolev spaces) as in \eqref{eq:Lpx} have been studied
extensively over the last two decades motivated by the model for electro-rheological fluids from \cite{RaRu1,RaRu2}, and we refer to \cite{DiHaHaRu} for a comprehensive treatment. As far as stochastic problems are concerned, a first analysis
of problems involving random variable exponents can be found in \cite{VaWiZi} (see also \cite{BaVaWiZi} for a previous result on a stochastic $p(t,x)$-Laplacian). In this work, the existence and uniqueness of weak solutions of a stochastic $p(\omega,t,x)$-Laplacian type equation is established by use of the variational approach, and problems connected to compactness and non-uniqueness do not occur.

The boundary conditions in the real world applications are quite
complicated and of substantial influence on the fluid motion. Nevertheless, our goal is to focus on
the effect of a random variable exponent as well as stochastic perturbations imposed through stochastic volume forces. So, for a first analysis
we consider \emph{periodic} boundary conditions, where the physical domain
is identified with the flat torus
\[
\mt = \Big([0,1]\Big|_{\{0,1\}} \Big)^n.
\]
The first main result of this paper is the existence of a weak martingale solution
to \eqref{0.4}--\eqref{0.3} under periodic boundary conditions where the variable exponent $p$ is Lipschitz continuous in $x$ and satisfies
\begin{align}\label{eq:3011}
\inf_{\Omega\times Q}p>\frac{3n}{n+2},
\end{align}
see Theorem \ref{thm:main} for the precise statement. This generalises the results
from \cite{TeYo} to the case of variable exponents. 
As a consequence of the nature of martingale solutions we are not able to describe the variable exponent as a given function defined on $\Omega\times Q$. Instead, we rather describe a probability law on $\hold^0([0,T]\times \mt)$. 

Our approach is based on a finite-dimensional Galerkin approximation and a refined stochastic compactness method involving Skorokhod's representation theorem. Since the system \eqref{0.4}--\eqref{0.3} is nonlinear in the gradient of the velocity field we have to demonstrate its compactness in $W^{1,1}(\mt)$ first.
This is achieved by fractional estimates for $\nabla\bfv$
inspired by the results from \cite[Chapter 5]{MNRR}, where deterministic problems with constant $p$ are considered. Under more restrictive assumptions on the variable exponent $p$, we are able to show pathwise uniqueness of solutions. As a consequence, we obtain pathwise solutions (see Theorem \ref{thm:main3}) using the method by Gy\"ongy-Krylov.
Eventually, we are concerned with the existence of analytically strong solutions (see Definitions \ref{def:strong} and \ref{def:strongstrong}), where equation \eqref{0.4}$_1$ holds almost everywhere in space. This is based on the existence of second derivatives of the velocity field.
Because of the non-standard growth character of \eqref{0.3} this is much more involved than the situation with constant $p$. By simply differentiating equation \eqref{0.4}$_1$ we are left with an a priori unbounded integral, cp. \eqref{eq:reg}.
This issue can be overcome by combining a parabolic interpolation as in \cite{AcMiSe} with an improved moment estimate, cp. Theorem \ref{thm:2.1'}. Consequently, we obtain weak (or even strong) pathwise solutions to \eqref{0.4}--\eqref{0.3}, see Theorem \ref{thm:main2} and Corollary \ref{cor:strong}.

The paper is organised as follows. In Section \ref{sec:framework} we present the mathematical framework, the various solution concepts to \eqref{0.4}--\eqref{0.3} as well as the main results.
In Section \ref{sec:galerkin} we study the finite-dimensional approximation
to \eqref{0.4}--\eqref{0.3} and derive uniform a prior estimates.
Section \ref{sec:weak} is dedicated to the existence of martingale solutions.
Under more restrictive assumptions on the exponent $p$, we then show existence of stochastically strong solutions. In the final section we establish the existence of analytically strong solutions subject to suitable additional assumptions imposed on the data. 

\subsection*{Acknowledgments.} The authors gratefully acknowledge support through the Edinburgh Mathematical Society during a stay of the second author at Heriot Watt University Edinburgh in November 2016, where this work had been commenced.\\
The authors would like to thank the referee for the careful reading of the manuscript and the valuable suggestions.

\section{Framework and Main Results}
\label{sec:framework}
\subsection{Function Space Setup}\label{sec:functionspaces}
In this section we briefly introduce the function spaces to be dealt with in the main part of the paper. Incorporating the periodic boundary conditions, all spatial function spaces are defined on the torus $\tn$. 
Specifically, we define for $0<\kappa<\infty$ and $1<q<\infty$ the corresponding Bessel--Sobolev spaces by 
\begin{align*}
& \sobo^{\kappa,q}(\tn):=\left\{ v\colon\tn\to\R^{n}\colon\; \|v\|_{\kappa,q}^{q}:= \sum_{k\in\mathbb{Z}}\langle k\rangle^{\kappa q}|c_{k}(v)|^{q} <\infty\right\}, \\
& \sobo_{\Div}^{\kappa,q}(\tn):=\sobo^{\kappa,q}(\tn)^{n}\cap\{v\in\lebe^{1}(\tn;\R^{n})\colon\,\Div(v)=0\,\text{in the sense of distributions}\},
\end{align*}
where $\langle\xi\rangle:=\sqrt{1+|\xi|^{2}}$ and $c_{k}(v)$ are the Fourier coefficients of $v\colon\tn\to\R^{n}$ with respect to the standard Fourier basis $\{x\mapsto\exp{(\db{2\pi}\imag k\cdot x)}\}_{k}$. Given a real Banach space $(X,\|\cdot\|)$, we moreover introduce the fractional Sobolev space $\sobo^{\kappa,q}(0,T;X)$ 
as the collection of all measurable $v\colon [0,T]\to X$ such that $v\in\lebe^{q}(0,T;X)$ (in the sense of Bochner integrability) and  
\begin{align*}
[v]_{\kappa,q}:=\int_{0}^{T}\int_{0}^{T}\frac{\|v(s)-v(t)\|_{X}^{q}}{|s-t|^{1+\kappa q}}\dif s\dif t<\infty.
\end{align*}
Let us note that the former space could be defined similarly by use of Fourier coefficients, however, we refrained from doing so to emphasize the non--periodicity with respect to time. \\
\db{We continue with a brief introduction of variable exponent Lebesgue spaces. For a given continuous function $p:Q\rightarrow[1,\infty)$ with $Q=(0,T)\times\mathbb T^n$ we define
the variable exponent Lebesgue space $\lebe^{p(\cdot)}(Q)$ by
\begin{align*}
  \lebe^{p(\cdot)}(Q)  =\sup\Big\{f\in
    \lebe^1(Q):\,\,\int_{Q}|f(t,y)|^{p(t,y)}\,\dd y\dt < \infty\Big\}.
\end{align*}
It is a Banach space together with the Luxemburg norm
\begin{align}\label{normptx}
\|f\|_{p(\cdot)}=\inf\Big\{k\geq0:\,\,\int_{Q}\Big|\frac{f(t,y)}{k}\Big|^{p(t,y)}\,\dd y\dt\leq 1\Big\}.
\end{align}
For most of the interesting functional analytical properties of $\lebe^{p(\cdot)}(Q)$ some mild regularity of $p$ is needed.
We say that a function $g: Q \to \mathbb R$ is $\log$-H{\"o}lder continuous in $Q$ if there exists a constant
$c \geq 0$ such that
\begin{align*}
  |g (X)-g (Y)| &\leq \frac{c}{\log
    (e+1/|X-Y|)},
\end{align*}
for all $X\not=Y\in Q$.
The smallest such constant~$c$ is the $\log$-H{\"o}lder constant
of~$g$. We define $\PPln(Q)$ to consist of those exponents $p\in
\lebe^1(Q)$ for which $\frac{1}{p} \,:\,  Q \to (0,1]$ is $\log$-H{\"o}lder
continuous. The norm $\|p\|_{\PPln(Q)}$ is the $\log$-H{\"o}lder constant
of~$1/p$. 
For $p\in \PPln(Q)$ almost all properties of the classical Lebesgue spaces extend to $\lebe^{p(\cdot)}(Q)$. In particular smooth functions are dense with respect to the norm given in \eqref{normptx}.}

Lastly, we shall sometimes surpress the target space and write, e.g., $\sobo^{\kappa,q}(\mathbb{T}^{n})$ instead of $\sobo^{\kappa,q}(\mathbb{T}^{n})^{n}$. However, no ambiguities will arise from this.

\subsection{Probability Setup}
\label{Probability Setup}
Let $(\Omega,\mf,\p)$ be a probability space endowed with a filtration $(\mf_t)=(\mf_t)_{t\geq0}$ which is a nondecreasing
family of sub-$\sigma$-fields of $\mf$, i.e., $\mf_s\subset\mf_t$ for $0\leq s\leq t\leq T$. We further assume that $(\mf_t)_{t\geq0}$ is right-continuous and $\mf_0$ contains all the $\p$-negligible events in $\mf$.

For a Banach space $(X,\|\cdot\|_X)$ and corresponding Borel $\sigma$-algebra $\mathfrak{B}(X)$, we denote by for $1\leq p<\infty$ by $\lebe^p(\Omega;X)$ the Banach space of all measurable functions $v:(\Omega,\mf)\rightarrow (X,\mathfrak B(X))$ such that
\begin{align*}
\E\big[\|v\|_X^p\big]=\int_\Omega\|v\|_X^p\,\dd\p <\infty.
\end{align*}
Let $\mathfrak U$ be a Hilbert space with orthonormal basis $(e_k)_{k\in\N}$ and let $\lebe_2(\mathfrak U,\lebe^2(\mt))$ be the set of Hilbert-Schmidt operators from $\mathfrak U$ to $\lebe^2(\mt)$. Moreover, define the auxiliary space $\mathfrak U_0\supset \mathfrak U$  as
\begin{align}\label{eq:U0}
\begin{aligned}
\mathfrak U_0&:=\left\{e=\sum_{k=1}^{\infty} \alpha_ke_k:\,\,\sum_{k=1}^{\infty} \frac{\alpha_k^2}{k^2}<\infty\right\},\\
\|e\|^2_{\mathfrak U_0}&:=\sum_{k=1}^\infty \frac{\alpha_k^2}{k^2},\quad e=\sum_{k=1}^{\infty} \alpha_ke_k.
\end{aligned}
\end{align}
Throughout the paper we consider a cylindrical $(\mf_t)$-Wiener process
$W=(W_t)_{t\geq0}$ which has the form
\begin{align}\label{eq:W}
\sobo=\sum_{k\in\N} \beta_k e_k
\end{align}
with a sequence $(\beta_k)$ of independent real valued $(\mf_t)$-Wiener processes. The embedding $\mathfrak U\hookrightarrow \mathfrak U_0$ is Hilbert-Schmidt and trajectories of $W$ are $\p$-a.s. continuous with values in $\mathfrak U_0$ (see \cite{PrZa}).
Now, for  $\Psi\in \lebe^2(\Omega;\lebe^2(0,T;\lebe_2(\mathfrak U,\lebe^2(\mt))))$ $(\mf_t)$-progressively measurable\footnote{We understand progressive measurability for non-continuous processes in the sense of random distributions introduced in \cite[Section 2.2]{BFHbook}.} we have that
$\int_0^t \Psi\,\dd W$
defines a $\p$-almost surely continuous $\lebe^2(\mt)$-valued $(\mf_t)$-martingale (cp.~\cite{PrZa} for stochastic calculus in infinite dimensions). Moreover, we can multiply with test-functions because
 \begin{align*}
\int_{\mt}\int_0^t \Psi\,\dd W\cdot \bfphi\dx=\sum_{k=1}^\infty \int_0^t\int_{\mt} \Psi e_k\cdot\bfphi\dx\,\dd\beta_k,\quad \bfphi\in \lebe^2(\mt),
\end{align*}
is well--defined.

In the SPDEs appearing in this paper we consider a noise coefficient $\Phi(\bfv)$
(depending on the solution $\bfv$) with values in $\lebe_2(\mathfrak U,\lebe^2(\mt))$.
We suppose the following linear growth assumptions on $\Phi$: For each $\bfz\in \lebe^2(\mt)$ there is a mapping $\Phi(\bfz):\mathfrak U\rightarrow \lebe^{2}(\mt)$ defined by $\Phi(\bfz)e_k=g_k(\bfz(\cdot))$. In particular, we suppose
that $g_k\in C^1(\R^n)$
and the following conditions for some $L\geq0$
\begin{align}\label{eq:phi}
&\sum_{k\in\N}|g_k(\bfxi)|^2 \leq L(1+|\bfxi|^2),\quad\sum_{k\in\N}|\nabla g_k(\bfxi)|^2 \leq L,\quad\bfxi\in\R^n.
\end{align}
\subsection{Martingale solutions}
Now we are in position to give a precise formulation of the meaning of a martingale solutions. We start with a weak martingale solution.
This solution is weak on both senses. Derivatives have to be understood in the sense of distributions (weak in the PDE-sense) and the underlying probability space is not a priori given but is part of the problem (weak in the probabilistic sense). Accordingly, the initial condition is given as a Borel probability measure on
$L^2_{\Div}(\mt)$. The same applies for the forcing $\bff$ which will be given
as a Borel probability measure on
$L^2(Q)$
As usual the moments of data measured via
\begin{align*}
C_r(\Lambda_0,\Lambda_\bff)
&=
\int_{\lebe^2_{\Div}(\mt)}
\big\|\bfu\big\|_{\lebe^2(\mt)}^{2r}\,\dd\Lambda_0(\bfu)+
\int_{\lebe^2(\Q)}\big\|\bfg\big\|_{\lebe^2(\Q)}^{2r}\,\dd\Lambda_\bff(\bfg)
\end{align*}
transfer to the solution.
Solutions as described above are called martingale solutions due to the connection to
the so-called Stroock--Varadhan martingale problem (see, e.g., \cite[Chap. 5.4]{KS}).
\begin{definition}[Weak martingale solution]\label{def:weak}
Let \db{$\Lambda$ be a Borel probability law on $\lebe^{2}_{\Div}(\tn)\times\lebe^{2}(Q)\times C^{0}([0,T]\times \mt)$ with marginals $\Lambda_0,\Lambda_\bff, \Lambda_p$.} 
Then a quintuple
$$\big((\Omega,\mf,(\mf_t),\p),\bfv,\bff,p,W)$$
is called a \emph{weak martingale solution} to \eqref{0.4}--\eqref{0.3} with the initial datum $\Lambda_0$, right-hand-side $\Lambda_\bff$ and exponent $\Lambda_p$ provided
\begin{enumerate}
\item $(\Omega,\mf,(\mf_t),\p)$ is a stochastic basis with a complete right-continuous filtration,
\item $W$ is an $(\mf_t)$-cylindrical Wiener process,
\item $\bff\in \lebe^2(\Omega,\F,\p;\lebe^2(\Q))$ is $(\mf_t)$-progressively measurable
\item $p\in \db{\lebe^2(\Omega,\F,\p;C^{0}([0,T]\times \mt))}$ is $(\mf_t)$-progressively measurable,
\item the velocity field satisfies $\bfv\in C_w([0,T];\lebe^2(\mt))$, $\ep(\bfv)\in \lebe^{p(\cdot)}(Q)$, $\p$-a.s. and is $(\mf_t)$-progressively measurable, 
\item we have \db{$\Lambda=\p\circ (\bfv(0),\bff,p)^{-1}$},
\item for all
 $\bfvarphi\in C^\infty_{\Div}(\mt)$ and all $t\in[0,T]$ there holds $\p$-a.s.\footnote{By $:$ we denote the inner product between matrices, that is $\bold{A}:\bold{B}=\sum_{ij}A_{ij}B_{ij}$ for $\bold{A},\bold{B}\in\R^{n\times n}$.}
\begin{align*}
\int_{\mt}\bfv(t)\cdot\bfvarphi\dx &+\int_0^t\int_{\mt}\mu(1+|\ep(\bfv)|)^{p(\cdot)-2}\ep(\bfv):\ep(\bfphi)\dxs-\int_0^t\int_{\mt}\bfv\otimes\bfv:\ep(\bfphi)\dxs\\&=\int_{\mt}\bfv(0)\cdot\bfvarphi\dx
+\int_{\mt}\int_0^t\bff\cdot\bfphi\dxs+\int_{\mt}\int_0^t\Phi(\bfv)\,\dd W\cdot \bfvarphi\dx.
\end{align*}
\end{enumerate}
\end{definition}

We obtain the following result.

\begin{theorem}[Weak martingale solution]\label{thm:main}
Suppose that
\begin{equation}\label{initial}
\int_{\lebe^{2}_{\Div}(\mt)}\big\|\bfu\big\|_{\sobo^{1,2}(\mt)}^2\,\dd\Lambda_0(\bfu)<\infty,\quad
\int_{\lebe^2(\Q)}\big\|\bfg\big\|_{L^2(0,T;\sobo^{1,2}(\mt))}^2\,\dd\Lambda_\bff(\bfg)<\infty,\\
\end{equation}
as well as $C_r(\Lambda_0,\Lambda_\bff)<\infty$ for all $1\leq r<\infty$.
Moreover, assume that 
\begin{align}\label{eq:lawp}
\Lambda_p\big\{h\in \db{\PPln(Q)}:p^-\leq h\leq p^+,\,\,\|h\|_\infty+\|\nabla h\|_\infty\leq\,c_p\big\}=1,
\end{align}
where $c_p<\infty$ and
\begin{align}\label{eq:p-p+}
\frac{3n}{n+2}<p^-\leq p^+<\frac{n+2}{n}p^-
\end{align}
\db{for some deterministic constants $p^-$ and $p^+$. Additionally, suppose that
\begin{align}\label{eq:lawp2}
\int_{\hold^0([0,T]\times \mt)}\big\|h\big\|_{\PPln(Q)}\,\dd\Lambda_p(h)<\infty
\end{align}}
Finally, assume that $\Phi$ satisfies \eqref{eq:phi}.
Then there is a weak martingale solution to \eqref{0.4}--\eqref{0.3} in the sense of Definition \ref{def:weak}. We have the energy estimate
\begin{align}\label{eq:regularityestimate1a}
\begin{split}
\E&\bigg[\sup_{t\in(0,T)}\int_{\mt} |\bfv(t)|^2\dx+\int_{\Q} |\ep(\bfv)|^{p(\cdot)}\dxt\bigg]^r\leq c\big(1+C_r(\Lambda_0,\Lambda_\bff)\big).
\end{split}
\end{align}
for \db{any $r\geq1$}.
\end{theorem}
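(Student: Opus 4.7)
My plan is to follow the by now standard Galerkin/Skorokhod route, with the genuine novelty being the joint propagation of the random variable exponent $p$ through the compactness and monotonicity arguments. On a fixed stochastic basis realising the joint law $\Lambda\otimes\mathrm{Law}(W)$, I would let $(\psi_k)_{k\in\N}$ be the Stokes eigenbasis on $\mt$ and $P_N$ the associated Galerkin projection, and solve the finite--dimensional It\^o SDE
$$\dd\bfv_N=P_N\bigl(\Div\bfS_N-\Div(\bfv_N\otimes\bfv_N)+\bff\bigr)\dt+P_N\Phi(\bfv_N)\dd W,\qquad \bfv_N(0)=P_N\bfv_0,$$
with $\bfS_N=\mu(1+|\ep(\bfv_N)|)^{p-2}\ep(\bfv_N)$. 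Global strong solvability on $[0,T]$ is standard since $p\leq p^{+}$ deterministically by \eqref{eq:lawp} and all spatial norms on $\Span\{\psi_1,\dots,\psi_N\}$ are equivalent. Applying It\^o's formula to $\|\bfv_N\|_{\lebe^2}^{2}$, combined with Burkholder--Davis--Gundy and the growth assumption \eqref{eq:phi}, would give the uniform moment bound
$$\E\sup_{t\in[0,T]}\|\bfv_N(t)\|_{\lebe^2}^{2r}+\E\Bigl(\int_Q|\ep(\bfv_N)|^{p(\cdot)}\dxt\Bigr)^{r}\leq c\bigl(1+C_r(\Lambda_0,\Lambda_\bff)\bigr)$$
for every $r\geq 1$, which is the template for \eqref{eq:regularityestimate1a}.

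Next I would derive a uniform fractional time--regularity estimate for $\bfv_N$ with values in a space of negative spatial regularity, obtained by bounding each term of the Galerkin equation separately (the stochastic contribution via It\^o's isometry applied to $P_N\Phi(\bfv_N)$). Combined with the spatial modular control on $\ep(\bfv_N)$ and Korn's inequality, an Aubin--Lions--Simon argument furnishes tightness of the laws of $\bfv_N$ in $\lebe^2(0,T;\lebe^2(\mt))\cap C_w([0,T];\lebe^2(\mt))$. The hypotheses \eqref{eq:lawp}--\eqref{eq:lawp2} make the marginal $\Lambda_p$ supported on a relatively compact subset of $\hold^0([0,T]\times\mt)$, so tightness of the joint family $(\bfv_N,\bff,p,W)$ on the appropriate path space follows. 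Jakubowski's extension of Skorokhod's theorem then yields a new probability space on which an extracted subsequence converges almost surely to a limit $(\bfv,\bff,p,W)$ with the correct marginal laws, the exponent sequence $\tilde p_N$ converging uniformly to $\tilde p$.

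The central difficulty, and the step I expect to be the main obstacle, is identifying the weak limit of the viscous stress with the monotone nonlinearity built from the limit pair $(\bfv,p)$. The convective term presents no surprise: strong convergence $\bfv_N\to\bfv$ in $\lebe^2(Q)$ together with \eqref{eq:p-p+} suffices, as $p^->3n/(n+2)$ is precisely the Serrin threshold that makes $\bfv\otimes\bfv\in \lebe^1_{t,x}$. For the viscous term I would run a variable--exponent variant of the Minty/Browder argument: the fractional--derivative technique of \cite[Ch.\,5]{MNRR}, transported to the stochastic setting, should yield compactness of $\ep(\bfv_N)$ in $\lebe^1(Q)$ and hence a.e.\ convergence along a further subsequence. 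The $\log$-H\"older regularity of $p$ secured by \eqref{eq:lawp2} then guarantees the usual density and duality properties of the variable exponent spaces, and Vitali's theorem upgrades the a.e.\ limit of $\bfS_N$ to strong convergence in $\lebe^{p'(\cdot)}(Q)$; the uniform convergence $\tilde p_N\to\tilde p$ is crucial here to match the varying modular structure as $N\to\infty$.

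To close the construction, the stochastic integral on the new basis is identified as $\int_0^\cdot\Phi(\bfv)\dd W$ by the now--standard procedure: the limiting martingale part is shown to be a continuous square--integrable martingale with the correct quadratic variation, after which a representation theorem delivers the cylindrical Wiener process driving the limit equation. The energy estimate \eqref{eq:regularityestimate1a} finally passes to the limit by Fatou's lemma and weak lower semicontinuity of the modular $\int_Q|\ep(\cdot)|^{p(\cdot)}\dxt$ along sequences with uniformly converging exponents, completing the construction of a weak martingale solution.
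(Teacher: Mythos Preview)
Your overall route---Galerkin approximation, uniform $r$-th moment energy estimate via It\^o and BDG, fractional estimates in the spirit of \cite[Ch.~5]{MNRR}, tightness, Skorokhod, identification of the limit---is exactly the paper's. Two points, however, need correction.

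First, the architecture of your compactness step is inverted. You propose to obtain tightness only in $\lebe^{2}(0,T;\lebe^{2})\cap C_{w}([0,T];\lebe^{2})$, change probability space via Skorokhod, and \emph{afterwards} invoke the MNRR fractional estimates to get a.e.\ convergence of $\ep(\bfv_{N})$. But after Skorokhod you only possess the convergences that were built into the path space; pathwise compactness of gradients on the new basis does not follow automatically. The paper avoids this by feeding the fractional estimate directly into the path space: applying It\^o's formula to $g_{\lambda}(\|\nabla\bfv^{N}\|_{2}^{2})$ (i.e.\ testing the Galerkin system with $-\Delta\bfv^{N}$, which is where the periodic setting is essential) gives a uniform bound on $\nabla\bfv^{N}$ in $\lebe^{\overline p}(0,T;\sobo^{\beta,\overline p}(\tn))$ for some $\beta>0$ and any $\overline p<p^{-}$. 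Together with H\"older-in-time regularity with values in $\sobo^{-\ell,p_{0}}$, this yields tightness directly in the Polish space $C([0,T];\sobo^{-\ell,p_{0}}_{\Div})\cap\lebe^{\overline p}(0,T;\sobo^{1,\overline p}_{\Div})$, so that classical Skorokhod already delivers \emph{strong} convergence $\nabla\tilde\bfv^{N}\to\nabla\tilde\bfv$ in $\lebe^{\overline p}(Q)$. No Minty--Browder device is needed; since also $\tilde p^{N}\to\tilde p$ uniformly, the stress $\bfS_{\tilde p^{N}}(\cdot,\ep(\tilde\bfv^{N}))$ converges by continuity plus Vitali, as you yourself describe at the end.

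Second, you misattribute the role of the lower bound $p^{-}>\tfrac{3n}{n+2}$. It is not the threshold for $\bfv\otimes\bfv\in\lebe^{1}_{t,x}$ (that is free once $\bfv\in\lebe^{\infty}_{t}\lebe^{2}_{x}$). It is precisely what is needed to close the fractional estimate: testing with $-\Delta\bfv^{N}$ produces $\int_{\mt}|\nabla\bfv^{N}|^{3}\dif x$ from the convective term, and the condition $p^{-}>\tfrac{3n}{n+2}$ (together with the upper bound $p^{+}<\tfrac{n+2}{n}p^{-}$, which controls the contribution $\int_{\mt}|\nabla\bfv^{N}|^{p^{+}+\varrho}\dif x$ arising from the $x$-dependence of $p$) is what allows the interpolation between $\|\nabla\bfv^{N}\|_{p^{-}}$, $\|\nabla\bfv^{N}\|_{2}$ and the dissipation to absorb these terms.
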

\begin{remark}\label{rem:2.3}
Let us explain the assumptions on upper and lower bound on $p$ in \eqref{eq:p-p+}.
\begin{itemize}
\item The lower bound is the same as in the case of constant from \cite{TeYo} in the two and three dimensional case (\db{we do not consider higher dimensions as they are not of physical interest}). 
\item It will become clear from the proof of Theorem \eqref{thm:main}
that the assumption \eqref{eq:p-p+} can be relaxed to
\begin{align}\label{eq:p-p+2}
\frac{3n-4}{n}<p^-\leq p^+<np^-+4
\end{align}
provided $p^-\geq2$ (where the lower bound is redundant for $n=2,3$).
We decided for the version in \eqref{eq:p-p+} as it is physically meaningful that $p^-$ is as low as possible
whereas non-Newtonian fluids with growth-exponent higher than $p=3$ are not known (the case $p=3$ refers to the the classical Smagorinsky
model \cite{Sm}).
\end{itemize}
\end{remark}
\begin{remark}
By slightly refining our estimates it is possible to weaken the assumption in \eqref{eq:lawp} from a deterministic constant $c_p$ to a random variable $c_p$ with arbitrary high moments. This seems more realistic in view of the random character of the exponent.
\end{remark}
\begin{remark}
In contrast to the deterministic case we need assumptions between $p^-$ and $p^+$ to balance our estimates. In the deterministic case this can be avoided
by localizing the problem and arguing on a small parabolic cube where $p^-$ and $p^+$ are arbitrary close (recall that $p$ is continuous). This is not possible here
because of the random character of $p$.
\end{remark}

The method we are using in the proof of Theorem \ref{thm:main} originates
from \cite[Chap. 5]{MNRR}, where the deterministic problem with constant $p$ is studied. The key idea is to analyse fractional derivatives of the velocity gradient.
This method is only very powerful in the case of periodic boundary conditions, where a test with $\Delta \bfv^N$ ($\bfv^N$ is the Galerkin approximation of the velocity field) is possible. The situation in the two-dimensional situation is much better than the 3D case as we have
\begin{align*}
\int_{\mt}\bfv^N\otimes\bfv^N:\nabla\bfv^N\dx=0.
\end{align*}
Due to this we can expect solutions which are strong in PDE sense. Before we give a proper definition we have to introduce the pressure function (as we need a formulation which holds a.e. in space without test-functions).\\
Assume that $\big((\Omega,\mf,(\mf_t),\p),\bfv,\bff,p,W)$ is a weak martingale solution to \eqref{0.4}--\eqref{0.3} in the sense of Definition \ref{def:weak}. In particular, we have $\p$-a.s.
\begin{align*}
\int_{\mt}\bfv(t)\cdot\bfvarphi\dx &+\int_0^t\int_{\mt}\mu(1+|\ep(\bfv)|)^{p(\cdot)-2}\ep(\bfv):\ep(\bfphi)\dxs-\int_0^t\int_{\mt}\bfv\otimes\bfv:\ep(\bfphi)\dxs\\&=\int_{\mt}\bfv(0)\cdot\bfvarphi\dx
+\int_{\mt}\int_0^t\bff\cdot\bfphi\dxs+\int_{\mt}\int_0^t\Phi(\bfv)\,\dd W\cdot \bfvarphi\dx
\end{align*}
for all
 $\bfvarphi\in C^\infty_{\Div}(\mt)$ and all $t\in[0,T]$. Now, for $\bfphi\in C^\infty(\mt)$ we can insert $\bfphi-\nabla\Delta^{-1}\Div\bfphi$
and obtain
\begin{align}
\nonumber
\int_{\mt}\bfv(t)\cdot\bfvarphi\dx &+\int_0^t\int_{\mt}\mu(1+|\ep(\bfv)|)^{p(\cdot)-2}\ep(\bfv):\ep(\bfphi)\dxs-\int_0^t\int_{\mt}\bfv\otimes\bfv:\ep(\bfphi)\dxs\\
\label{eq:pressure}&=\int_{\mt}\bfv(0)\cdot\bfvarphi\dx
+\int_0^t\int_{\mt}\pi_{\mathrm{det}}\,\Div\bfphi\dxs+\int_{\mt}\int_0^t\bff\cdot\bfphi\dxs
\\
\nonumber&+\int_{\mt}\int_0^t\Phi(\bfv)\,\dd W\cdot \bfvarphi\dx+\int_{\mt}\int_0^t\Phi^\pi\,\dd W\cdot \bfvarphi\dx,
\end{align}
where 
\begin{align*}
\pi_{\mathrm{det}}&=\pi_{\mathrm{det}}^1+\pi_{\mathrm{det}}^2+\pi_{\mathrm{det}}^3,\\
\pi_{\mathrm{det}}^1&=\Delta^{-1}\Div\Div\big(\mu(1+|\ep(\bfv)|)^{p(\cdot)-2}\ep(\bfv)\big),\\
\pi_{\mathrm{det}}^2&=-\Delta^{-1}\Div\Div\big(\bfv\otimes\bfv\big),\\
\pi_{\mathrm{det}}^3&=\Delta^{-1}\Div\bff,\\
\Phi^\pi&=-\nabla\Delta^{-1}\Div\Phi(\bfv).
\end{align*}
This corresponds to the stochastic pressure decomposition introduced in \cite[Chap. 3]{Br2}. However, the situation with periodic boundary conditions we are considering here is much easier as the harmonic component of the pressure disappears. From a strong solution (in the PDE-sense) we expect that
\eqref{eq:pressure} holds without the use of the test-functions, i.e. we have
\begin{align*}
\bfv(t)&=\bfv(0)+\int_0^t\Big[\Div\Big(\mu(1+|\ep(\bfv)|)^{p(\cdot)-2}\ep(\bfv)\Big)-\Div\big(\bfv\otimes\bfv\big)
-\nabla\pi_{\mathrm{det}}+\bff\Big]\ds\\&+
\int_0^t\big[\Phi(\bfv)+\Phi^\pi\big]\,\dd W
\end{align*}
$\p$-a.s. for all $t\in[0,T]$. We remark that already under the assumptions of Theorem \ref{thm:main} we have enough spatial regularity to define $\Div\big(\bfv\otimes\bfv\big)$ as an $\lebe^1$-function (in fact $p^-\geq\frac{2n+2}{n+2}$ is required). So, the critical point is whether second derivatives of $\bfv$ exists and  $\Div\big((\kappa+|\ep(\bfv)|)^{p-2}\ep(\bfv)\big)$ is an $\lebe^1$-function. The required regularity of the pressure terms follows immediately from this and continuity properties of $\Delta^{-1}$ on Lebesgue and Sobolev spaces. Let us finally mention that regularity of $\bfv$ is usually measured via the nonlinear function $\bfF_p(\cdot,\ep(\bfv))$, where
$$\bfF_p(\omega,t,x,\bfeta)=(1+|\bfeta|)^{\frac{p(\omega,t,x)-2}{2}}\bfeta,\quad \bfeta\in \R^{n\times n}.$$
Now we are ready to define a strong martingale solution.
\begin{definition}[Strong martingale solution]\label{def:strong}
\db{Let $\Lambda$ be a Borel probability law on $\lebe^{2}_{\Div}(\tn)\times\lebe^{2}(Q)\times C^{0}([0,T]\times \mt)$ with marginals $\Lambda_0,\Lambda_\bff, \Lambda_p$.} 
Then a quintuple
$$\big((\Omega,\mf,(\mf_t),\p),\bfv,\bff,p,W)$$
is called a \emph{strong martingale solution} to \eqref{0.4}--\eqref{0.3} with the initial datum $\Lambda_0$, right-hand-side $\Lambda_\bff$ and exponent $\Lambda_p$ provided it is a weak martingale solution in the sense of Definition \ref{def:weak} and the following holds.
\begin{enumerate}
\item We have $\bfF_p(\cdot,\ep(\bfv))\in \lebe^2(0,T;\sobo^{1,2}(\mt))$ $\p$-a.s.,
\item there are $\pi_{\mathrm{det}}$ and $\Phi^\pi$ $(\mf_t)$-progressively measurable
such that $\pi_{\mathrm{det}}\in L^1(Q)$ and $\Phi^\pi\in L^2(0,T;\lebe_2(\mathfrak U;\lebe^2(\mt)))$ $\p$-a.s. as well as
\begin{align}\label{eq:strong}
\begin{aligned}
\bfv(t)&=\bfv(0)+\int_0^t\Big[\Div\Big(\mu(1+|\ep(\bfv)|)^{p(\cdot)-2}\ep(\bfv)\Big)-\Div\big(\bfv\otimes\bfv\big)
-\nabla\pi_{\mathrm{det}}+\bff\Big]\ds\\&+
\int_0^t\big[\Phi(\bfv)+\Phi^\pi\big]\,\dd W
\end{aligned}
\end{align}
$\p$-a.s. for all $t\in[0,T]$.
 \end{enumerate}
\end{definition}
\begin{theorem}[Strong martingale solution]\label{thm:main2}
Let the assumptions of Theorem \ref{thm:main} be satisfied.
Suppose that either we have
\begin{itemize}
\item[(i)] $n=2$ and $1<p^-\leq  p^+< 4$ or;
\item[(ii)] $n=3$ and $\frac{11}{5}<p^-\leq p^+\leq p^-+\frac{4}{5}$.
\end{itemize}
Then there is a strong martingale solution to \eqref{0.4}--\eqref{0.3} in the sense of Definition \ref{def:strong}. We have the estimate
\begin{align}\label{eq:regularityestimate1b}
\begin{split}
\E&\bigg[\sup_{t\in(0,T)}\int_{\mt} |\nabla\bfv(t)|^2\dx+\int_{\Q} |\nabla\bfF_p(\cdot,\ep(\bfv))|^{2}\dxt\bigg]\leq c(\Lambda_0,\Lambda_\bff).
\end{split}
\end{align}
\end{theorem}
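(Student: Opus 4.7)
The plan is to upgrade the first-order energy estimate to a second-order one at the Galerkin level, transfer it to the martingale limit, and then extract the pointwise formulation via the stochastic pressure decomposition. Starting from the Galerkin approximation $\bfv^N$ from Section \ref{sec:galerkin}, I would apply It\^o's formula to $\tfrac{1}{2}\|\nabla\bfv^N\|_{\lebe^{2}(\mt)}^{2}$; periodicity and solenoidality make testing the projected equation against $-\Delta\bfv^N$ admissible. Standard monotonicity manipulations for $p(\cdot)$-growth operators convert the dissipative contribution into $c\int_{\mt}|\nabla\bfF_{p}(\cdot,\ep(\bfv^N))|^{2}\dx$ up to a remainder $\mathcal R(\bfv^N,\nabla_{x}p)$ produced by the spatial dependence of the exponent. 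The It\^o correction and the martingale term are controlled as in the first-order estimate via the linear growth \eqref{eq:phi} and the Burkholder-Davis-Gundy inequality.

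The two decisive nonlinear terms are the convective and the remainder terms. In two dimensions (case (i)) the convective term $-\int_{\mt}(\bfv^N\otimes\bfv^N):\nabla\Delta\bfv^N\dx$ is absorbed using the Ladyzhenskaya inequality together with the first-order bound, and $p^{+}<4$ is the only residual constraint required to handle the $\nabla p$ remainder below. In three dimensions (case (ii)) the convective term has to be absorbed by the parabolic interpolation of Acerbi-Mingione-Seregin \cite{AcMiSe} phrased in terms of $\bfF_{p}$, which closes precisely when $p^{-}>\tfrac{11}{5}$. The remainder morally reads
\begin{align*}
\mathcal R(\bfv^N,\nabla_{x}p)\sim\int_{\mt}(1+|\ep(\bfv^N)|)^{p-2}\log(1+|\ep(\bfv^N)|)\,\ep(\bfv^N)\,|\nabla_{x}p|\,|\nabla^{2}\bfv^N|\dx,
\end{align*}
which is a priori not integrable. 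It is tamed by combining the improved moment bound of Theorem \ref{thm:2.1'}, which supplies arbitrarily high moments of $\E\int_{Q}|\ep(\bfv^N)|^{p+\delta}\dxt$ for some $\delta>0$, with a further parabolic interpolation against $\|\nabla\bfF_{p}(\cdot,\ep(\bfv^N))\|_{L^{2}(Q)}^{2}$; the closure of this interpolation is exactly what forces $p^{+}\leq p^{-}+\tfrac{4}{5}$ in the 3D case, after which a small multiple of $\|\nabla\bfF_{p}\|_{L^{2}(Q)}^{2}$ may be absorbed on the left-hand side.

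A Gronwall argument then yields uniform-in-$N$ bounds for $\nabla\bfv^N$ in $L^{\infty}(0,T;\lebe^{2}(\mt))$ and for $\nabla\bfF_{p}(\cdot,\ep(\bfv^N))$ in $L^{2}(Q)$, in expectation of any order. These bounds are transferred to the martingale limit $\bfv$ through the Skorokhod/Jakubowski representation already used in the proof of Theorem \ref{thm:main}, producing \eqref{eq:regularityestimate1b} by weak lower semicontinuity on the new probability space. To finally upgrade the weak formulation to the pointwise identity \eqref{eq:strong}, the enhanced regularity forces $\Div\bigl((1+|\ep(\bfv)|)^{p-2}\ep(\bfv)\bigr)\in\lebe^{1}(Q)$ and, in the present range of $p$, also $\Div(\bfv\otimes\bfv)\in\lebe^{1}(Q)$. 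The stochastic pressure decomposition recorded in \eqref{eq:pressure} then identifies $\pi_{\mathrm{det}}$ and $\Phi^{\pi}$ with the required integrability, and removing the test functions from \eqref{eq:pressure} delivers \eqref{eq:strong}.

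The main obstacle is precisely the log-weighted remainder $\mathcal R(\bfv^N,\nabla_{x}p)$: the $x$-dependence of the exponent obstructs the direct differentiation of \eqref{0.4}. Its absorption is delicate and relies crucially on the combination of the improved moment bound of Theorem \ref{thm:2.1'} and of the Acerbi-Mingione-Seregin parabolic interpolation, with the proximity of $p^{-}$ and $p^{+}$ imposed by (i)/(ii) dictating the precise exponent balance needed to close the estimate.
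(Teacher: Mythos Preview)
Your overall architecture matches the paper: It\^o's formula for $\tfrac12\|\nabla\bfv^N\|_{\lebe^2}^2$, the split into dissipation, $\nabla_x p$--remainder, convective term, stochastic integral and It\^o correction, followed by a Jakubowski--Skorokhod passage and the pressure decomposition \eqref{eq:pressure}. There is, however, a genuine gap in your two-dimensional argument.

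In case (i) the convective contribution is not ``absorbed using the Ladyzhenskaya inequality'': on $\mathbb T^2$ with periodic, divergence-free $\bfv^N$ one has the algebraic identity
\[
\int_{\mathbb T^2}\Div(\bfv^N\otimes\bfv^N)\cdot\Delta\bfv^N\dx=0,
\]
and the paper relies precisely on this vanishing. This is not cosmetic. A Ladyzhenskaya-type estimate produces a bound of the form $\|\bfv^N\|_{\lebe^2}^{1/2}\|\nabla\bfv^N\|_{\lebe^2}\|\nabla^2\bfv^N\|_{\lebe^2}^{3/2}$, and absorbing the second-order factor requires the dissipation to control $\|\nabla^2\bfv^N\|_{\lebe^2}^{2}$. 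For $p^-<2$ the dissipative term is only $\int(1+|\ep(\bfv^N)|)^{p(\cdot)-2}|\nabla\ep(\bfv^N)|^2\dx$, strictly weaker than $\|\nabla^2\bfv^N\|_{\lebe^2}^2$, and the absorption does not close. The full range $1<p^-\leq p^+<4$ in (i) is attainable only because the convective term disappears.

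Two smaller corrections. First, Theorem \ref{thm:2.1'} yields arbitrarily high moments of $\int_Q|\ep(\bfv^N)|^{p(\cdot)}\dxt$, not of $\int_Q|\ep(\bfv^N)|^{p+\delta}\dxt$; the super-$p$ integrability is obtained \emph{a posteriori} by interpolating the second-order estimate (giving $\bfF_p\in L^2_tL^{2\sigma}_x$ via Sobolev) against $L^\infty_tL^\tau_x$ control from $\sup_t\|\nabla\bfv^N\|_{\lebe^2}^2$. Second, in three dimensions the paper lumps the convective bound $\int|\nabla\bfv^N|^3$ and the $\nabla_x p$--remainder into a single term $\int_Q|\ep(\bfv^N)|^{\overline q}\dxt$ with $\overline q=\max\{3,p^++\varrho\}$, and the closure condition of the interpolation, $\overline q<p^-+\tfrac{4}{5}$, simultaneously yields $p^->\tfrac{11}{5}$ and $p^+<p^-+\tfrac{4}{5}$; the two restrictions in (ii) thus share a common origin rather than the separate ones you indicate.
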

\begin{remark}
\noindent
\begin{itemize}
\item We remark that the most interesting situation for physical applications is
when $p$ can vary between 1 and 2 as assumed in part (i) of Theorem \ref{thm:main2}.
This refers to a range between a Newtonian fluid ($p=2$) and a plastic material ($p$ close to 1) which has been observed in experiments on electro-rheological fluids. 
\item Similar to \eqref{thm:main2} ii) it is also possible to gain a result in two dimensions if $p^+\geq4$.
In this case the assumption reads as $p^+<p^-+1$. However this situation is outside the range of physical interest and we leave the details to the reader.
\end{itemize}
\end{remark}

\subsection{\db{Stochastically strong} solutions.}
We are now concerned with the question whether a solution to \eqref{0.4}--\eqref{0.3} can be constructed on a given probability space and a given initial velocity $\bfv_0$ (which is a a random variable rather than a probability law).
This goes hand in hand with the question of unique solvability and holds already on the level of stochastic ODEs  (see, e.g., \cite[Chap. 5]{KS}). We start with a formulation which is weak in the PDE-sense.
\begin{definition}[\db{Weak stochastically strong solution}] \label{def:strsol}
Let $(\Omega,\mf,(\mf_t),\p)$ be a stochastic basis with a complete right-continuous filtration and let ${W}$ be an $(\mf_t) $-cylindrical Wiener process.
Let
$\bfv_0$ be an $\lebe^{2}(\mt)$-valued $\mf_0$-measurable random variable. Let $\bff$ and $p$ be  $(\mf_t)$-progressively measurable processes
such that $\bff\in \lebe^2(\Q)$ and $p\in \hold^{0}([0,T]\times \mt)$ with $p\geq1$ $\p$-a.s..
A function $\bfv$ is called a \db{weak stochastically strong solution} solution to \eqref{0.4}--\eqref{0.3} provided
\begin{enumerate}
\item the velocity field satisfies $\bfv\in C_w([0,T];\lebe^2(\mt))$, $\ep(\bfv)\in \lebe^{p(\cdot)}(Q)$, $\p$-a.s. and is $(\mf_t)$-progressively measurable, 
\item we have $\bfv(0)=\bfv_0$ $\p$-a.s.,
\item for all
 $\bfvarphi\in C^\infty_{\Div}(\mt)$ and all $t\in[0,T]$ there holds $\p$-a.s.
\begin{align*}
\int_{\mt}\bfv(t)\cdot\bfvarphi\dx &+\int_0^t\int_{\mt}\mu(1+|\ep(\bfv)|)^{p(\cdot)-2}\ep(\bfv):\ep(\bfphi)\dxs-\int_0^t\int_{\mt}\bfv\otimes\bfv:\ep(\bfphi)\dxs\\&=\int_{\mt}\bfv(0)\cdot\bfvarphi\dx
+\int_{\mt}\int_0^t\bff\cdot\bfphi\dxs+\int_{\mt}\int_0^t\Phi(\bfv)\,\dd W\cdot \bfvarphi\dx.
\end{align*}
\end{enumerate}
\end{definition}
We obtain the following result (recall Remark \ref{rem:2.3} for the assumptions on $p$ below in \eqref{eq:p-p+pathwise} below).
\begin{theorem}[\db{Weak stochastically strong solution}]\label{thm:main3}
Let
$\bfv_0$ be an $\lebe^{2}(\mt)$-valued $\mf_0$-measurable random variable. Let $\bff$ and $p$ be  $(\mf_t)$-progressively measurable processes such that $\bff\in \lebe^2(\Q)$ and $p\in \hold^{0}([0,T]\times \mt)$ $\p$-a.s. Suppose that
\begin{equation}\label{initial}
\E\big\|\bfv_0\big\|_{\lebe^{2}(\mt)}^{2r}<\infty,\quad
\E\big\|\bff\big\|_{\lebe^2(Q)}^{2r}<\infty,\quad \E\|p\|_{\PPln(Q)}<\infty.
\end{equation}
for all $1\leq r<\infty$ as well as
\begin{equation}\label{initial}
\E\big\|\bfv_0\big\|_{\sobo^{1,2}(\mt)}^2<\infty,\quad
\E\big\|\bff\big\|_{L^2(0,T;\sobo^{1,2}(\mt))}^2<\infty.
\end{equation}
Moreover, assume that we have $\p$-a.s.
\begin{align*}
p^-\leq p\leq p^+,\,\,\|p\|_\infty+\|\nabla p\|_\infty\leq\,c_p,
\end{align*}
where $c_p<\infty$ and
\begin{align}\label{eq:p-p+pathwise}
\frac{n+2}{2}\leq p^-\leq p^+<np^-+4.
\end{align}
Finally, assume that $\Phi$ satisfies \eqref{eq:phi}.
Then there is a \db{weak stochastically strong solution} to \eqref{0.4}--\eqref{0.3} in the sense of Definition \ref{def:strsol}. We have the energy estimate
\begin{align}\label{eq:regularityestimate1c}
\begin{split}
\E&\bigg[\sup_{t\in(0,T)}\int_{\mt} |\bfv(t)|^2\dx+\int_{\Q} |\ep(\bfv)|^{p(\cdot)}\dxt\bigg]^r\\&\leq \,c\,\E\bigg[\int_{\mt} |\bfv_0|^2\dx+\int_{\Q} |\bff|^2\dxt\bigg]^r.
\end{split}
\end{align}
\end{theorem}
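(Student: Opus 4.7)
The plan is to obtain a probabilistically strong solution via the Gy\"ongy--Krylov principle, combining the existence of a martingale solution from Theorem~\ref{thm:main} with a pathwise uniqueness argument that exploits the hypothesis $p^-\geq(n+2)/2$. Since $(n+2)/2>3n/(n+2)$ for $n\in\{2,3\}$, Theorem~\ref{thm:main} applies to $\Lambda:=\p\circ(\bfv_0,\bff,p)^{-1}$ and yields the moment estimate \eqref{eq:regularityestimate1a}. More importantly, the Galerkin approximations $\bfv^N$ produced in Section~\ref{sec:galerkin} are already defined on the original stochastic basis (driven by the prescribed $(\bfv_0,\bff,p,W)$) and satisfy the same uniform bounds.

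Next I would establish pathwise uniqueness. Given two weak stochastically strong solutions $\bfv_1,\bfv_2$ with identical data on the same basis, set $\bfw:=\bfv_1-\bfv_2$. The identity $\int_{\mt}(\bfv_i\cdot\nabla)\bfw\cdot\bfw\dx=0$, valid since $\Div\bfv_i=0$, reduces the convective contribution in the It\^o expansion of $\tfrac12\|\bfw\|_{\lebe^2}^2$ to $\int_{\mt}(\bfw\cdot\nabla)\bfv_2\cdot\bfw\dx$. Because $p\geq p^-\geq 2$, pointwise strong monotonicity of $\bfS$ combined with Korn's inequality provides a dissipation bounded below by $c\|\nabla\bfw\|_{\lebe^2}^2$, while H\"older and Gagliardo--Nirenberg together with Young's inequality give
\begin{align*}
\Bigl|\int_{\mt}(\bfw\cdot\nabla)\bfv_2\cdot\bfw\dx\Bigr|\leq C\|\nabla\bfv_2\|_{\lebe^{p^-}}\|\bfw\|_{\lebe^2}^{2-n/p^-}\|\nabla\bfw\|_{\lebe^2}^{n/p^-}\leq\varepsilon\|\nabla\bfw\|_{\lebe^2}^2+C_\varepsilon\|\nabla\bfv_2\|_{\lebe^{p^-}}^{2p^-/(2p^--n)}\|\bfw\|_{\lebe^2}^2.
\end{align*}
The Young exponent $2p^-/(2p^--n)$ is at most $p^-$ precisely when $p^-\geq(n+2)/2$, so the last factor is time-integrable thanks to \eqref{eq:regularityestimate1a} and Korn's inequality in variable-exponent spaces (available by $p\in\PPln(Q)$). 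The It\^o correction is absorbed via \eqref{eq:phi}, and a standard stochastic Gr\"onwall argument paired with the BDG inequality gives $\E\sup_{t\leq T}\|\bfw(t)\|_{\lebe^2}^2=0$.

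With pathwise uniqueness and existence of a martingale solution for the law $\Lambda$, I would then apply the Gy\"ongy--Krylov characterisation of convergence in probability to the Galerkin sequence $\bfv^N$ living on the original basis. Each pair $(\bfv^N,\bfv^M)$ is tight in the topology used in Section~\ref{sec:weak}; any subsequential limit in law is supported on pairs of martingale solutions with the common data $(\bfv_0,\bff,p,W)$, and pathwise uniqueness forces it onto the diagonal. Hence $\bfv^N$ converges in probability to a limit $\bfv$ defined on the original $(\Omega,\mf,\p)$ with $\bfv(0)=\bfv_0$; identification of $\bfv$ as a weak stochastically strong solution is then routine along the arguments of Section~\ref{sec:weak}, and the energy estimate \eqref{eq:regularityestimate1c} is inherited from \eqref{eq:regularityestimate1a}.

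The principal obstacle is the convective estimate, which is essentially sharp at $p^-=(n+2)/2$; the variable-exponent setting forces all integrability statements to be routed through the deterministic lower bound $p^-$, and bridging $\ep(\bfv)\in\lebe^{p(\cdot)}(Q)$ to $\nabla\bfv\in\lebe^{p^-}(Q)$ relies crucially on the log-H\"older regularity of $p$ enforced by \eqref{eq:lawp}--\eqref{eq:lawp2} via Korn's inequality for variable exponents.
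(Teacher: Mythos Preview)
Your proposal is correct and follows essentially the same strategy as the paper: pathwise uniqueness via strong monotonicity of $\bfS$ (using $p^-\geq 2$) together with the convective estimate at the threshold $p^-\geq(n+2)/2$, followed by the Gy\"ongy--Krylov argument applied to the Galerkin approximations living on the original basis. The only cosmetic differences are that the paper closes the uniqueness step by applying It\^o's formula to $e^{-\int_0^t G\,ds}\|\bfw\|_{\lebe^2}^2$ (Schmalfuss' trick) rather than a BDG--Gr\"onwall combination, and that constant-exponent Korn in $\lebe^{p^-}$ already suffices since $|\ep(\bfv)|^{p^-}\leq 1+|\ep(\bfv)|^{p(\cdot)}$, so the variable-exponent Korn inequality you invoke is not actually needed.
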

\begin{remark}
 As in the deterministic case (see \cite{MNRR} and \cite{Di}) the assumptions
on $p^-$ yielding uniqueness are rather restrictive. The same bounds are needed in Theorem \ref{thm:main3} for the existence of \db{stochastically strong} solutions.
\end{remark}
Having a look at Definitions \ref{def:strong} and \ref{def:strsol} we can expect strong \db{stochastically strong solutions} if the assumptions of Theorems \ref{thm:main2} and \ref{thm:main3} are satisfied. These solutions are strong in both senses.
\begin{definition}[\db{Strong stochastically strong solution}] \label{def:strongstrong}
Let $(\Omega,\mf,(\mf_t),\p)$ be a stochastic basis with a complete right-continuous filtration and let ${W}$ be an $(\mf_t) $-cylindrical Wiener process.
Let
$\bfv_0$ be an $\lebe^{2}(\mt)$-valued $\mf_0$-measurable random variable. Let $\bff$ and $p$ be  $(\mf_t)$-progressively measurable processes such that $\bff\in \lebe^2(\Q)$ and $p\in \hold^{0}([0,T]\times \mt)$ with $p\geq 1$ $\p$-a.s.
A function $\bfv$ is called a \db{strong stochastically strong solution} to \eqref{0.4}--\eqref{0.3} provided it is a \db{weak stochastically strong solution} in the sense of Definition \ref{def:strsol} and the following holds.
\begin{enumerate}
\item We have $\bfF_p(\cdot,\ep(\bfv))\in \lebe^2(0,T;\sobo^{1,2}(\mt))$ $\p$-a.s.,
\item there are $\pi_{\mathrm{det}}$ and $\Phi^\pi$ $(\mf_t)$-progressively measurable such that\\$\pi_{\mathrm{det}}\in \lebe^1(0,T;\sobo^{1,1}(\mt))$ and $\Phi^\pi\in \lebe^2(0,T;\lebe_2(\mathfrak U;\lebe^2(\mt)))$ $\p$-a.s. as well as
\begin{align}\label{eq:strong'}
\begin{aligned}
\bfv(t)&=\bfv(0)+\int_0^t\Big[\Div\Big(\mu(1+|\ep(\bfv)|)^{p(\cdot)-2}\ep(\bfv)\Big)-\Div\big(\bfv\otimes\bfv\big)
-\nabla\pi_{\mathrm{det}}+\bff\Big]\ds\\&+
\int_0^t\big[\Phi(\bfv)+\Phi^\pi\big]\,\dd W
\end{aligned}
\end{align}
$\p$-a.s. for all $t\in[0,T]$.
 \end{enumerate}
\end{definition}
By combining the ideas of the proofs of Theorems \ref{thm:main2} and \ref{thm:main3} we obtain the following corollary (see end of Section \ref{sec:2d} for the proof).
\begin{corollary}
\label{cor:strong}
Let the assumptions of Theorem \ref{thm:main2} be satisfied. Suppose in addition that $p^-\geq\frac{n+2}{2}$.
Then there is a \db{strong stochastically strong solution} to \eqref{0.4}--\eqref{0.3} in the sense of Definition \ref{def:strongstrong}. 
\end{corollary}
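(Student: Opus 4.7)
The strategy is to combine the strong-solution construction underlying Theorem \ref{thm:main2}, which yields the extra regularity $\bfF_p(\cdot,\ep(\bfv))\in\lebe^2(0,T;\sobo^{1,2}(\mt))$ that makes the pointwise identity \eqref{eq:strong'} meaningful, with the pathwise-uniqueness plus Gy\"ongy--Krylov mechanism used in the proof of Theorem \ref{thm:main3}. The additional hypothesis $p^-\geq\frac{n+2}{2}$ is exactly what makes the Sobolev embedding $\sobo^{1,p^-}(\mt)\hookrightarrow\lebe^\infty(\mt)$ available (with the minor modification that in two dimensions $p^-=2$ suffices since then $\sobo^{1,2}\hookrightarrow\lebe^q$ for all $q<\infty$), which in turn is the ingredient closing the uniqueness estimate for two solutions sharing the same data, as in Theorem \ref{thm:main3}.

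Concretely, I would run the Galerkin scheme of Section \ref{sec:galerkin} directly on the given stochastic basis $(\Omega,\mf,(\mf_t),\p)$ with the prescribed random data $(\bfv_0,\bff,p)$. Along the sequence $\bfv^N$ one derives in parallel the basic energy bound underlying \eqref{eq:regularityestimate1c} and the second-derivative/parabolic-interpolation bound underlying \eqref{eq:regularityestimate1b}; both are available at the Galerkin level under the assumptions of Theorem \ref{thm:main2}, through testing with $-\Delta\bfv^N$, parabolic interpolation in the spirit of \cite{AcMiSe}, and the improved moment estimate already exploited there. The lower bound $p^-\geq\frac{n+2}{2}$ then provides, via Sobolev embedding, the $\lebe^\infty$-type control of $\nabla\bfv$ that was decisive for pathwise uniqueness in Theorem \ref{thm:main3}.

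With a priori bounds and pathwise uniqueness in hand, I would invoke the Gy\"ongy--Krylov principle exactly as in Theorem \ref{thm:main3}: for any two subsequences of $(\bfv^N)$ the joint law together with the random data is tight on a suitable quasi-Polish path space (supporting weak convergence in $C_w([0,T];\lebe^2(\mt))$ and strong convergence of $\bfF_p(\cdot,\ep(\bfv^N))$ in $\lebe^2(Q)$). A Jakubowski--Skorokhod extraction produces two limit quintuples which, by the analysis of Theorem \ref{thm:main2}, are strong martingale solutions driven by the same initial datum, forcing and exponent; pathwise uniqueness then forces their joint law to be concentrated on the diagonal, which by Gy\"ongy--Krylov yields convergence in probability of $\bfv^N$ on the original basis. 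The limit $\bfv$ is a weak stochastically strong solution in the sense of Definition \ref{def:strsol} and, by lower semicontinuity of the strong norms together with the continuity of $\Delta^{-1}\Div$ on the relevant Lebesgue and Sobolev spaces, satisfies all regularity requirements of Definition \ref{def:strongstrong}, together with \eqref{eq:strong'}.

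The main obstacle is the simultaneous compatibility of the two sets of hypotheses: the upper bounds on $p^+$ coming from Theorem \ref{thm:main2} (namely $p^+<4$ for $n=2$, $p^+\leq p^-+\frac{4}{5}$ for $n=3$) must coexist with the lower bound $p^-\geq\frac{n+2}{2}$, and the moment estimates along the Galerkin sequence must be uniform enough to close both the strong second-derivative estimate and the uniqueness comparison. A direct check shows that the admissible ranges do intersect nontrivially (producing $2\leq p^-\leq p^+<4$ for $n=2$ and $\frac{5}{2}\leq p^-\leq p^+\leq p^-+\frac{4}{5}$ for $n=3$), so within this intersection the argument closes using only the analytical ingredients already developed in Theorems \ref{thm:main2} and \ref{thm:main3}.
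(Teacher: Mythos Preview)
Your overall strategy is correct and matches the paper's: run the Galerkin scheme on the given basis, combine the strong a priori estimates underlying Theorem~\ref{thm:main2} with the pathwise uniqueness of Proposition~\ref{prop:unique}, and pass to the limit via the Gy\"ongy--Krylov argument, using the Jakubowski--Skorokhod theorem because the path space carries a weak topology on the $\bfF$-component and is therefore only sub-Polish. The paper does exactly this, citing the sub-Polish version of Gy\"ongy--Krylov from \cite[Chapter~2, Theorem~2.10.3]{BFHbook}.

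There is, however, a genuine error in your explanation of the role of the hypothesis $p^-\geq\frac{n+2}{2}$. It does \emph{not} give the Sobolev embedding $\sobo^{1,p^-}(\mt)\hookrightarrow\lebe^\infty(\mt)$: that embedding requires $p^->n$, which is strictly stronger (for $n=3$ one has $\frac{n+2}{2}=\frac{5}{2}<3$, and even for $n=2$ the borderline $p^-=2$ fails). In the actual uniqueness proof (Proposition~\ref{prop:unique}) the convective term is estimated via the Gagliardo--Nirenberg interpolation $\|\bfw\|_{2p^-/(p^--1)}\leq\|\bfw\|_2^{\alpha}\|\nabla\bfw\|_2^{1-\alpha}$ followed by Young's inequality, leading to a Gronwall weight $G(\omega,t)\sim\|\nabla\bfv^1(t)\|_{p^-}^{2p^-/(2p^--n)}$. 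The condition $G\in\lebe^1(0,T)$ $\p$-a.s.\ then forces $\frac{2p^-}{2p^--n}\leq p^-$, which is equivalent to $p^-\geq\frac{n+2}{2}$. No $\lebe^\infty$ control of $\nabla\bfv$ enters. Since your argument ultimately invokes Theorem~\ref{thm:main3} (whose proof contains the correct estimate), the strategy still closes; but the mechanism you describe is wrong and should be replaced by the interpolation/Gronwall reasoning above.
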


\section{Galerkin approximation}
\label{sec:galerkin}
Our approach is a stochastic variant of the usual Galerkin ansatz, thereby reducing the problem of interest to an stochastic ordinary differential equation. In this respect, we firstly record the following fundamental fact on eigenvector expansions for the Stokes operator, the proof of which can be found in the appendix of \cite{MNRR}:
\begin{lemma}
There is a sequence $(\lambda_k)\subset\R$ and a sequence of functions $(\bfw_{k})\subset \sobo_{\Div}^{1,2}(\mt)$ such that the following hold:
\begin{enumerate}
\item For each $k\in\mathbb{N}$, $\bfw_{k}$ is an eigenvector to the eigenvalue $\lambda_k$ of the Stokes--operator in the sense that
\begin{align*}
\langle \bfw_{k},\bfphi\rangle_{\sobo^{1,2}(\mt)}= \lambda_k\int_{\mt}\bfw_k\cdot\bfvarphi\dx\quad\text{for all }\bfvarphi\in \sobo_{\Div}^{1,2}(\mt),
\end{align*}
\item $\int_{\mt}\bfw_k\cdot\bfw_{m}\dif x=\delta_{km}$ for all $k,m\in\mathbb{N}$,
\item $1\leq\lambda_1\leq \lambda_2\leq...$ and $\lambda_k\rightarrow\infty$,
\item $\langle\tfrac{\bfw_k}{\sqrt{\lambda_k}},\tfrac{\bfw_m}{\sqrt{\lambda_m}}\rangle_{\sobo^{1,2}(\mt)}=\delta_{km}$ for all $k,m\in\mathbb{N}$,
\item $(\lambda_k^{-1/2}\bfw_k)$ is a Hilbert space basis of $\sobo_{\Div}^{1,2}(\mt)$.
\end{enumerate}
\end{lemma}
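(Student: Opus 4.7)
The plan is to realise $(\bfw_k)$ as the eigenvectors of a compact self-adjoint operator on $L^2_{\Div}(\mt)$ via a standard Lax--Milgram/spectral theorem argument; the periodic, divergence-free setting causes no essential complication over the classical Stokes set-up.

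First, I would introduce on $\sobo^{1,2}_{\Div}(\mt)$ the symmetric, continuous, coercive bilinear form
\begin{equation*}
a(\bfu,\bfvarphi):=\langle \bfu,\bfvarphi\rangle_{\sobo^{1,2}(\mt)}=\int_{\mt}\bfu\cdot\bfvarphi\dx+\int_{\mt}\nabla\bfu:\nabla\bfvarphi\dx,
\end{equation*}
and for each $\bff\in \lebe^{2}_{\Div}(\mt)$ use Lax--Milgram to solve $a(T\bff,\bfvarphi)=\int_{\mt}\bff\cdot\bfvarphi\dx$ for all $\bfvarphi\in\sobo^{1,2}_{\Div}(\mt)$. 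This defines a linear operator $T\colon \lebe^{2}_{\Div}(\mt)\to\sobo^{1,2}_{\Div}(\mt)\hookrightarrow \lebe^{2}_{\Div}(\mt)$, where I need to observe that $T\bff$ is automatically divergence-free (by construction of the test space). Self-adjointness of $T$ on $\lebe^{2}_{\Div}(\mt)$ follows from symmetry of $a$, and positivity $\int_{\mt}\bff\cdot T\bff\dx=a(T\bff,T\bff)\geq0$ is immediate; moreover $\ker T=\{0\}$ since $T\bff=0$ forces $\int\bff\cdot\bfvarphi=0$ for all divergence-free test-fields, hence $\bff=0$ in $\lebe^2_{\Div}(\mt)$.

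Next, I would invoke compactness: since $\sobo^{1,2}(\mt)\hookrightarrow \lebe^{2}(\mt)$ compactly by Rellich--Kondrachov, the operator $T$ is a compact self-adjoint positive injective operator on the Hilbert space $\lebe^{2}_{\Div}(\mt)$. The Hilbert--Schmidt (spectral) theorem then yields a countable $\lebe^2$-orthonormal basis $(\bfw_k)$ of $\lebe^{2}_{\Div}(\mt)$ with $T\bfw_k=\mu_k\bfw_k$, $\mu_k>0$, $\mu_k\searrow 0$. Setting $\lambda_k:=\mu_k^{-1}$ and rewriting the defining equation gives
\begin{equation*}
\langle \bfw_k,\bfvarphi\rangle_{\sobo^{1,2}(\mt)}=a(\bfw_k,\bfvarphi)=\lambda_k\int_{\mt}\bfw_k\cdot\bfvarphi\dx\qquad\forall\,\bfvarphi\in\sobo_{\Div}^{1,2}(\mt),
\end{equation*}
which is property (a); property (b) is built into the spectral theorem, and $\lambda_k\to\infty$ with $\lambda_k\geq 1$ (since testing (a) with $\bfvarphi=\bfw_k$ gives $\lambda_k=\|\bfw_k\|_{\sobo^{1,2}}^{2}\geq\|\bfw_k\|_{\lebe^{2}}^{2}=1$) gives (c).

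Property (d) then follows from a direct calculation: by (a) with $\bfvarphi=\bfw_m$ and (b),
\begin{equation*}
\langle \tfrac{\bfw_k}{\sqrt{\lambda_k}},\tfrac{\bfw_m}{\sqrt{\lambda_m}}\rangle_{\sobo^{1,2}(\mt)}=\tfrac{1}{\sqrt{\lambda_k\lambda_m}}\,\lambda_k\!\int_{\mt}\bfw_k\cdot\bfw_m\dx=\sqrt{\tfrac{\lambda_k}{\lambda_m}}\,\delta_{km}=\delta_{km}.
\end{equation*}
For (e) it remains to show that $(\lambda_k^{-1/2}\bfw_k)$ is not merely orthonormal but total in $\sobo^{1,2}_{\Div}(\mt)$; this is the only step that needs a small argument. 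I would take $\bfv\in\sobo^{1,2}_{\Div}(\mt)$ with $\langle \bfv,\bfw_k/\sqrt{\lambda_k}\rangle_{\sobo^{1,2}}=0$ for every $k$, use (a) to rewrite this as $\lambda_k\int_{\mt}\bfv\cdot\bfw_k\dx=0$, hence $\bfv\perp\bfw_k$ in $\lebe^2_{\Div}(\mt)$ for all $k$, and conclude $\bfv=0$ from the fact that $(\bfw_k)$ is an $\lebe^2$-orthonormal \emph{basis} of $\lebe^2_{\Div}(\mt)$. The main (minor) obstacle is this density step, which in turn rests on injectivity of $T$ on $\lebe^2_{\Div}(\mt)$; once that is established the remainder is bookkeeping.
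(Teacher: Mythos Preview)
Your proof is correct and follows the standard Lax--Milgram/spectral-theorem route for constructing the Stokes eigenbasis. Note that the paper does not actually prove this lemma but simply refers to the appendix of \cite{MNRR}; your argument is essentially the one found there, so there is nothing further to compare.
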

We consider the Skorokhod representation of the law $\Lambda\db{\otimes} \Gamma$, where $\Gamma$ is the law of a cylindrical Wiener process on $\mathfrak U$. We obtain
a probability space $(\Omega,\mf,\p)$, \db{a random variable $(\bfv_0,\bff,p)$ with law $\Lambda$}, as well as a cylindrical Wiener process $W=\sum_k \beta_ke_k$. Finally, we set
$$\mf_t:=\sigma\Big(\db{\sigma[\bfv_0]}\cup\sigma[ p\db{|_{[0,t]}}]\cup \sigma[\bff\db{|_{[0,t]}}]\cup\bigcup_{k=1}^\infty\sigma[ W_k\db{|_{[0,t]}}]\cup\db{\{N\in\mf:\p(N)=0\}}\Big),\quad t\in[0,T].$$
Our objective for the rest of the section is to establish the existence of solutions $\vv^{N}$ of the system \eqref{0.1} in the particular form
\begin{align}
\bfv^N=\sum_{k=1}^N c_k^N\bfw_k=\bfC^N\cdot\bfomega^N,\quad \bfomega^N=(\bfw_1,...,\bfw_N),
\end{align}
where $\bfC^N=(c_i^N):\Omega\times (0,T)\rightarrow \R^N$. Our aim is hereafter to solve ($k=1,...,N$)
\begin{align}
\begin{aligned}
\int_{\mt} &\dd\bfv^N\cdot\bfw_k\dx+\int_{\mt}\bfS_p(\cdot,\ep(\bfv^N)):\ep(\bfw_k)\dx\dt\\ & =\int_{\mt}\bfv^N\otimes\bfv^N:\nabla\bfw_k\dx\dt+\int_{\mt}\bff\cdot\bfw_k\dx\dt+\int_{\mt} \Phi(\bfv^N)\,\dd W^N\cdot\bfw_k\dx,
\\
&\bfv^N(0)=\mathcal P^N\bfv_0.\label{eq:gal}
\end{aligned}
\end{align}
with
\begin{align*} 
  \bfS_p(\omega,t,x,\bfeta)=\mu(1+|\bfeta|)^{p(\omega,t,x)-2}\bfeta.
\end{align*}
Here $\mathcal P^N:\lebe^{2}_{\Div}(\mt)\rightarrow \mathcal X_N:=\mathrm{span}\left\{\bfw_1,...,\bfw_N\right\}$ is the orthogonal projection, i.e.
\begin{align*}
\mathcal P^N\bfu=\sum_{k=1}^N\langle \bfu,\bfw_k\rangle_{\lebe^{2}}\bfw_k.
\end{align*} 
The equation above is to be understood $\mathbb P$ a.s. and for a.e. $t$ and we set
\begin{align*}
W^N=\sum_{k=1}^N\bfe_k \beta_k=\bfe^N\cdot \bfbeta^N.
\end{align*}
It is equivalent to solving
\begin{align}\label{SDE*}
\begin{cases}\dd\bfC^N&=\big[\bfmu (t,\bfC^N)\big]\dt+\bfSigma (\bfC^N)\,\dd\bfbeta^N_t\\
\bfC^N(0)&=\bfC_0\end{cases}
\end{align}
with the abbreviations
\begin{align*}
\bfmu (\bfC^N)&=\bigg(-\int_{\mt}\bfS_p(\cdot,\bfC^N\cdot\ep( \bfw^N)):\ep(\bfw_k)\dx+\int_{\mt}(\bfC^N\cdot\bfw^N)\otimes (\bfC^N\cdot\bfw^N):\nabla\bfw_k\dx\bigg)_{k=1}^N\\
&+\bigg(\int_{\mt}\bff(t)\cdot\bfw_k\dx\bigg)_{k=1}^N,\\
\bfSigma(\bfC^N)&=\bigg(\int_\mt \Phi(\bfC^N\cdot W^N)\bfe_l\cdot \bfw_k\dx\bigg)_{k,l=1}^N,\\
\bfC_0&=\Big(\langle\bfv_0,\bfw_k\rangle_{\lebe^{2}(\mt)}\Big)_{k=1}^N.
\end{align*}
We apply the results from \cite{PrRo}, Thm. 3.1.1. In the following we will check the assumptions. We have by the monotonicity of $\bfS_p$
\begin{align*}
\big(\bfmu(t,\bfC^N)&-\bfmu(t,\tilde{\bfC}^N)\big)\cdot\big(\bfC^N-\tilde{\bfC}^N\big)\\
&=-\int_{\mt}\big(\bfS_p(\cdot,\ep(\bfv^N))-\bfS_p(\cdot,\ep(\tilde{\bfv}^N))\big):\big(\ep(\bfv^N)-\ep(\tilde{\bfv}^N)\big)\dx\\
&+\int_{\mt}\big(\bfv^N\otimes \bfv^N-\tilde{\bfv}^N\otimes \tilde{\bfv}^N\big):\big(\ep(\bfv^N)-\ep(\tilde{\bfv}^N)\big)\dx\\
&\leq \int_{\mt}\big(\bfv^N\otimes \bfv^N-\tilde{\bfv}^N\otimes \tilde{\bfv}^N\big):\big(\ep(\bfv^N)-\ep(\tilde{\bfv}^N)\big)\dx.
\end{align*}
If $|\bfC^N|\leq R$ and $|\tilde{\bfC}^N|\leq R$ there holds
\begin{align*}
\big(\bfmu(t,\bfC^N)&-\bfmu(t,\tilde{\bfC}^N)\big)\cdot\big(\bfC^N-\tilde{\bfC}^N\big)
\leq c(R,N)|\bfC^N-\tilde{\bfC}^N|^2.
\end{align*}
Here we took into account boundedness of $\bfw_k$ and $\nabla\bfw_k$.
This implies weak monotonicity in the sense of \cite{PrRo}, (3.1.3) using Lipschitz continuity $\bfSigma$ in $\bfC^N$, cp. (\ref{eq:phi}).
On account of $\int_{\mt} \bfv^N\otimes\bfv^N:\ep(\bfv^N)\dx=0$ there holds
further
\begin{align*}
\bfmu(t,\bfC^N)\cdot\bfC^N&=-\int_{\mt}\bfS_p(\cdot,\ep(\bfv^N)):(\ep(\bfv^N)\dx+\int_{\mt}\bff(t)\cdot\bfv^N\dx\leq c\,(1+\|\bff(t)\|_2\|\bfv^N\|_2)\\
&\leq (1+\|\bff(t)\|_2)(1+\|\bfv^N\|^2)\leq \,c\,(1+\|\bff(t)\|_2)(1+|\bfC^N|^2).
\end{align*}
So we have using the linear growth of $\bfSigma$ which follows from \ref{eq:phi}
\begin{align*}
\bfmu(\bfC^N)\cdot\bfC^N+|\bfSigma(\bfC^N)|^2\leq c(+\|\bfv^N\|_2^2)\big(1+|\bfC^N|^2\big).
\end{align*}
As the integral $\int_0^T (1+\|\bff(t)\|_2)\dt$ is finite $\p$-a.s. this yields weak coercivity in the sense of \cite{PrRo}, (3.1.4). We obtain a unique strong solution $\bfC^N\in \lebe^2(\Omega;C[0,T])$ to the SDE (\ref{SDE*}).\\\

We obtain the following a priori estimate.

\begin{theorem}\label{thm:2.1'}
Assume (\ref{0.3}) with $p:\Omega\times Q\rightarrow(1,\infty)$, (\ref{eq:phi}) and
for some $r\geq1$
\begin{equation}\label{initial'}
\int_{\lebe^2_{\Div}(\mt)}\big\|\bfu\big\|_{\lebe^2(\mt)}^{2r}\,\dd\Lambda_0(\bfu)<\infty,\quad
\int_{\lebe^2(\Q)}\big\|\bfg\big\|_{\lebe^2(\Q)}^{2r}\,\dd\Lambda_\bff(\bfg)<\infty.
\end{equation}
Then there holds uniformly in $N$
\begin{align}\label{eq:regularityestimate1}
\begin{split}
\E&\bigg[\sup_{t\in(0,T)}\int_{\mt} |\bfv^N(t)|^2\dx+\int_{\Q} |\ep(\bfv^N)|^{p(\cdot)}\dxt\bigg]^r\leq C_r(\Lambda_0,\Lambda_\bff),\\
C_r(\Lambda_0,\Lambda_\bff)&=c\,\bigg(1+
\int_{\lebe^2_{\Div}(\mt)}\big\|\bfu\big\|_{\lebe^2(\mt)}^{2r}\,\dd\Lambda_0(\bfu)+
\int_{\lebe^2(\Q)}\big\|\bfg\big\|_{\lebe^2(\Q)}^{2r}\,\dd\Lambda_\bff(\bfg)\bigg),
\end{split}
\end{align}
provided $C_r(\Lambda_0,\Lambda_\bff)$ is finite.
\end{theorem}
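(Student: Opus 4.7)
The strategy is to apply the finite-dimensional It\^o formula to $\Psi(\bfC^N)=\tfrac12 |\bfC^N|^2 = \tfrac12\|\bfv^N\|_{\lebe^2(\mt)}^2$, using the SDE \eqref{SDE*}, and then to exploit the monotonicity/coercivity structure of $\bfS_p$. Because each $\bfw_k$ is divergence-free, so is $\bfv^N$, and testing the momentum equation \eqref{eq:gal} with $\bfv^N$ itself annihilates the convective term through the identity $\int_{\mt}\bfv^N\otimes\bfv^N:\ep(\bfv^N)\dx=0$. Consequently, It\^o's formula yields $\mathbb P$-a.s., for every $t\in[0,T]$,
\begin{align*}
\tfrac12\|\bfv^N(t)\|_{\lebe^2}^2 + \int_0^t\!\!\int_{\mt}\bfS_p(\cdot,\ep(\bfv^N))\!:\!\ep(\bfv^N)\dxs
&= \tfrac12\|\mathcal P^N\bfv_0\|_{\lebe^2}^2 + \int_0^t\!\!\int_{\mt}\bff\cdot\bfv^N\dxs \\
&\quad + M^N(t) + \tfrac12\int_0^t\!\sum_{k=1}^N\|\mathcal P^N g_k(\bfv^N)\|_{\lebe^2}^2\ds,
\end{align*}
where $M^N(t):=\sum_{k=1}^N\int_0^t\langle g_k(\bfv^N),\bfv^N\rangle_{\lebe^2}\,\dd\beta_k$ is the stochastic integral contribution.

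Next I would use the elementary pointwise inequality $\bfS_p(\omega,t,x,\bfeta):\bfeta = \mu(1+|\bfeta|)^{p(\omega,t,x)-2}|\bfeta|^2 \geq c_\mu\,|\bfeta|^{p(\omega,t,x)}-c_\mu$ (valid since $p\geq p^->1$ everywhere) to recover the modular norm on the left. On the right, $|\int_{\mt}\bff\cdot\bfv^N\dx|\leq \tfrac12\|\bff\|_2^2+\tfrac12\|\bfv^N\|_2^2$ by Cauchy--Schwarz and Young, while the It\^o correction is bounded by $\tfrac{L}{2}|\mt|+\tfrac{L}{2}\|\bfv^N\|_2^2$ thanks to the growth assumption \eqref{eq:phi}. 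Taking the supremum in time on the left, raising to the power $r$, and then taking expectations produces
\begin{align*}
\E\Big[\sup_{s\le t}\|\bfv^N(s)\|_2^{2r} + \Big(\!\int_0^t\!\!\int_\mt |\ep(\bfv^N)|^{p(\cdot)}\dxs\Big)^{\!r}\Big] \leq c\,\E\Big[&1+\|\bfv_0\|_2^{2r}+\|\bff\|_{\lebe^2(Q)}^{2r}\\[-2pt] &+\Big(\!\int_0^t\!\|\bfv^N\|_2^2\ds\Big)^{\!r}+\sup_{s\le t}|M^N(s)|^r\Big].
\end{align*}

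Finally, the martingale term is handled via Burkholder--Davis--Gundy and \eqref{eq:phi}:
\begin{align*}
\E\sup_{s\leq t}|M^N(s)|^r \leq c_r\,\E\Big(\int_0^t\!\|\bfv^N\|_2^2\,(L+L\|\bfv^N\|_2^2)\ds\Big)^{\!r/2} \leq \tfrac12\E\sup_{s\leq t}\|\bfv^N(s)\|_2^{2r}+c_r\,\E\Big(1+\int_0^t\!\|\bfv^N\|_2^2\ds\Big)^{\!r},
\end{align*}
where Young's inequality was used in the last step so that the first term can be absorbed into the left-hand side. After absorption, H\"older in time bounds $(\int_0^t\|\bfv^N\|_2^2\ds)^r \leq T^{r-1}\int_0^t\|\bfv^N\|_2^{2r}\ds$, and Gronwall's lemma yields \eqref{eq:regularityestimate1} with a constant independent of $N$. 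The main obstacle is purely bookkeeping: one must pass from the pathwise energy identity to the $r$-th moment bound while simultaneously controlling the stochastic integral (BDG), absorbing its top-order part (Young), and then closing the loop (Gronwall); the variable exponent plays essentially no role here, since the coercivity of $\bfS_p:\ep$ against $|\ep|^{p(\cdot)}$ holds pointwise and integrates directly to the required modular quantity.
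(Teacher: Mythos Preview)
Your proof is correct and follows essentially the same approach as the paper: apply It\^o's formula to $\tfrac12|\bfC^N|^2$, use the cancellation of the convective term and the coercivity of $\bfS_p$, bound the force term by Young, the It\^o correction by \eqref{eq:phi}, and the martingale term by Burkholder--Davis--Gundy followed by Young's inequality to absorb the top-order part, then close with Gronwall. The only cosmetic difference is that the paper organizes the right-hand side into labeled pieces $J_1,J_2,J_3$ before estimating, whereas you carry the estimates inline.
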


\begin{proof}
We apply It\^{o}'s formula to the function $f(\bfC)=\tfrac{1}{2}|\bfC|^2$ which shows
\begin{align*}
\frac{1}{2}\|\bfv^N(t)\|_{\lebe^2(\mt)}^2
&=\frac{1}{2}\|\bfC^N(0)\|_{\lebe^2(\mt)}^2+\sum_{k=1}^N\int_0^t  c_k^N\,\dd(c_k^N)+\frac{1}{2}\sum_{k=1}^N\int_0^t \,\dd\langle\langle c_k^N\rangle\rangle\\
&=\frac{1}{2}\|\mathcal P^N\bfv_0\|_{\lebe^2(\mt)}^2-\int_0^t\int_{\mt}\bfS_p(\cdot,\ep(\bfv^N)):\ep(\bfv^N)\dxs\\
&+\int_0^t\int_{\mt}\bff\cdot\bfv^N\dxs+\int_{\mt}\int_0^t\bfv^N\cdot \Phi(\bfv^N)\,\dd\sobo^N\dx\\
&+\frac{1}{2}\int_{\mt}\int_0^t\,\dd\Big\langle\Big\langle\int_0^\cdot\Phi(\bfv^N)\,\dd\sobo^N\Big\rangle\Big\rangle\dx.
\end{align*}
Here we used $\dd\bfv^N=\sum_{k=1}^N \dd c_k^N\bfw_k$, $\int_{\mt}\bfv^N\otimes\bfv^N:\nabla\bfv^N\dx=0$ and property (ii) of the base $(\bfw_k)$.
Now we can follow, taking the $r$-th power, and the supremum, building expectations and using (\ref{0.3}) that
\begin{align*}
\E\bigg[\sup_{(0,T)}\int_{\mt} &|\bfv^N(t)|^2\dx+\int_0^T\int_{\mt} |\ep(\bfv^N)|^{p(\cdot)}\dxs\bigg]^r\\&\leq \,c\,\E\bigg[1+\|\bfv_0\|^2_{\lebe^2(\mt)}+J_1(T)+\sup_{(0,T)}J_2(t)+J_3(T)\bigg]^r.
\end{align*}
Here we abbreviated
\begin{align*}
J_1(t)&=\int_0^t\int_{\mt}|\bff||\bfv^N|\dxs,\\
J_2(t)&=\int_{\mt}\int_0^t\bfv^N\cdot \Phi(\bfv^N)\,\dd W^N\dx,\\
J_3(t)&=\int_{\mt}\int_0^t\,\dd\Big\langle\Big\langle\int_0^\cdot\Phi(\bfv^N)\,\dd W^N\Big\rangle\Big\rangle\dx.
\end{align*}
We obviously have
\begin{align*}
J_1\leq\,\int_0^t\int_{\mt}|\bff|^2\dxs+\int_0^t\int_{\mt}|\bfv^N|^2\dxs.
\end{align*}
Straightforward calculations show on account of (\ref{eq:phi})
\begin{align*}
\E[J_3]^r&=\E\bigg[\sum_{k=1}^N\int_0^t\bigg(\int_{\mt}  \Phi(\bfv^N)\bfe_k\dx\bigg)^2\ds\bigg]^r\\
&\leq\E\bigg[\sum_{k=1}^\infty\int_0^t\int_{\mt}  |g_k(\bfv^N)|^2\dxs\bigg]^r\\
&\leq\,c\,\E\bigg[1+\int_0^t\int_{\mt} |\bfv^N|^2\dxs\bigg]^r.
\end{align*}
On account of Burgholder-Davis-Gundi inequality, Young's inequality and (\ref{eq:phi}) we gain
\begin{align*}
\E\bigg[\sup_{t\in(0,T)}|J_2(t)|\bigg]^r&=\E\bigg[\sup_{t\in(0,T)}\bigg|\int_0^t\int_{\mt}\bfv^N\cdot\Phi(\bfv^N)\dx\,\dd W^N\bigg|\bigg]^r\\
&=\E\bigg[\sup_{t\in(0,T)}\bigg|\int_0^t\sum_{k=1}^N\int_{\mt}\bfv^N\cdot g_k(\bfv^N)\dx\,\dd\beta_k\bigg|\bigg]^{r}\\
&\leq c\,\E\bigg[\int_0^T\sum_{k=1}^N\bigg(\int_{\mt}\bfv^N\cdot g_k(\bfv^N)\dx\bigg)^2\dt\bigg]^{\frac{1}{2}}\\
&\leq c\,\E\bigg[\bigg(\int_0^T\bigg(\sum_{k=1}^\infty \int_{\mt}|\bfv^N|^2\dx\int_{\mt} |g_k(\bfv^N)|^2\dx\bigg)\dt\bigg]^{\frac{r}{2}}\\
&\leq c\,\E\bigg[1+\int_0^T\bigg( \int_G|\bfv^N|^2\dx\bigg) ^2\dt\bigg]^{\frac{r}{2}}\\
&\leq \delta\,\E\bigg[\sup_{t\in(0,T)}\int_G|\bfv^N|^2\dx\bigg]^r+c(\delta)\,\E\bigg[1+\int_0^T\int_G|\bfv^N|^2\dxt\bigg]^r,
\end{align*}
where $\delta>0$ is arbitrary.
This finally proves the claim by Gronwall's lemma for $\delta$ sufficiently small using $\Lambda_0=\p\circ\bfv_0^{-1}$ and $\Lambda_\bff=\p\circ\bff^{-1}$.
\end{proof}

\section{Analytically weak solutions}
\label{sec:weak}
This section is devoted to the proof of Theorems \ref{thm:main} and \ref{thm:main3}. In view of compactness, our main concern is the derivation of fractional estimates for $\nabla\bfv^N$. Based on this we are able to apply the stochastic compactness method employing Skorokhod's theorem to pass to the limit in the Galerkin approximation from the previous section.

\subsection{Fractional differentiability}
To set up fractional estimates in a convenient manner, we introduce the concave function for $\theta\geq0$
\begin{align*}
g(\theta)=g_{\lambda}(\theta):=\begin{cases}\frac{1}{1-\lambda}(1+\theta)^{1-\lambda},\quad &\lambda\neq1\\
\ln(1+\theta),\quad &\lambda=1
\end{cases}
\end{align*}
for 
\begin{align*}
\lambda=\tfrac{2(\overline q-p^{-})}{np^{-}-\overline qn+4},
\end{align*}
where $\overline q=\max\{3,p^++\varrho\}$ with $\varrho>0$ arbitrarily small. The additional power $\varrho$ arises from the elementary inequality
\begin{align}\label{eq:varrho}
\ln(1+|\bfxi|)\leq c_\varrho (1+|\bfxi|^\varrho)\quad\bfxi\in \R^{n\times n}.
\end{align}
Note that the denominator in the definition of $\lambda$ is positive as long as
\begin{align}\label{eq:p_new}
p^->\frac{\overline qn-4}{n}.
\end{align}
Similar to \cite[section 3]{TeYo} we have the following theorem.
\begin{theorem}\label{thm:2.1'''}
Suppose that
\begin{equation}\label{initial}
\int_{\lebe^{2}_{\Div}(\mt)}\big\|\bfu\big\|_{\sobo^{1,2}(\mt)}^2\,\dd\Lambda_0(\bfu)<\infty,\quad
\int_{\lebe^2(\Q)}\big\|\bfg\big\|_{L^2(0,T;\sobo^{1,2}(\mt))}^2\,\dd\Lambda_\bff(\bfg)<\infty.
\end{equation}
Moreover, assume that $\p$-a.s. $p\in \hold^0([0,T]\times\mt)$ such that $\p$-a.s. we have
\begin{align}\label{eq:lawp'}
1<p^-\leq p\leq p^+,\,\,\|\nabla p\|_\infty\leq\,c_p,
\end{align}
where $c_p<\infty$ and that \eqref{eq:p_new} holds.
Finally, assume that $\Phi$ satisfies \eqref{eq:phi}. Then we have
\begin{itemize}
\item[a)] If $p^{-}\geq 2$ then there holds uniformly in $N$:
\begin{align*}
\E&\bigg[\int_0^T\frac{\|\nabla^2\bfv^N(t)\|_2^2}{(1+\|\nabla\bfv^N(t)\|_2^2)^\lambda}\dt\bigg]\leq\, C_1(\Lambda_0,\Lambda_\bff).
\end{align*}
\item[a)] If $p^{-}< 2$ then there holds uniformly in $N$:
\begin{align*}
\E&\bigg[\int_0^T\frac{\|\nabla^2\bfv^N(t)\|_{p^{-}}^2}{(1+\|\nabla\bfv^N(t)\|_2^2)^\lambda(1+\|\nabla\bfv^N(t)\|_{p^{-}})^{2-p^{-}}}\dt\bigg]\leq\,
C_1(\Lambda_0,\Lambda_\bff).
\end{align*}
\end{itemize}
\end{theorem}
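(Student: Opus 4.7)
The plan is to test the Galerkin system \eqref{eq:gal} formally with $-\Delta \bfv^N$, which is admissible on the torus because the Stokes eigenfunctions $\bfw_k$ diagonalize $\Delta$, so $-\Delta \bfv^N=\sum_k\lambda_k c_k^N \bfw_k\in\mathcal X_N$. This converts the Galerkin identity into an evolution equation for $\tfrac{1}{2}\|\nabla\bfv^N(t)\|_2^2$. To close the estimate in the presence of the supercritical convective term, I would then apply It\^o's formula to the scalar process $G(t):=g_\lambda(\|\nabla\bfv^N(t)\|_2^2)$. Concavity of $g_\lambda$ (i.e.\ $g_\lambda''\leq 0$) makes the quadratic-variation correction term work in our favour (it can be discarded), while $g_\lambda'(\theta)=(1+\theta)^{-\lambda}$ supplies precisely the damping factor $(1+\|\nabla\bfv^N\|_2^2)^{-\lambda}$ that appears inside the time integral in both parts (a) and (b) of the statement.

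For the elliptic term, integration by parts gives $-\sum_i\int_{\mt}\partial_i \bfS_p(\cdot,\ep(\bfv^N)):\partial_i\ep(\bfv^N)\dx$; expanding the derivative of $\bfS_p=\mu(1+|\ep(\bfv^N)|)^{p(\cdot)-2}\ep(\bfv^N)$ yields the coercive contribution $c\int(1+|\ep(\bfv^N)|)^{p(\cdot)-2}|\nabla\ep(\bfv^N)|^2\dx$ plus an error of the form $\int \nabla p\cdot\ln(1+|\ep(\bfv^N)|)(1+|\ep(\bfv^N)|)^{p(\cdot)-1}|\nabla\ep(\bfv^N)|\dx$, the latter being controlled by $\|\nabla p\|_\infty\leq c_p$ together with \eqref{eq:varrho} (which is the reason the additional exponent $\varrho$ enters the definition of $\overline q$) and then absorbed into the good term by Young's inequality; Korn's inequality on $\mt$ lets us pass from $\nabla\ep(\bfv^N)$ to $\nabla^2\bfv^N$. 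The convective term, after a second integration by parts, is $\int \partial_i(\bfv^N\otimes\bfv^N):\partial_i\nabla\bfv^N\dx$, which I would bound by H\"older's inequality combined with Gagliardo--Nirenberg interpolation between $\|\bfv^N\|_2$, $\|\nabla\bfv^N\|_{\overline q}$ and $\|\nabla^2\bfv^N\|_q$ (with $q=2$ in case (a) and $q=p^-$ in case (b)). The noise produces a martingale contribution that vanishes under expectation, together with an It\^o correction bounded by $L\|\nabla\bfv^N\|_2^2$ via \eqref{eq:phi}; the forcing term is handled by Cauchy--Schwarz using the moment bounds in \eqref{initial}.

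The choice $\lambda=2(\overline q-p^-)/(np^--\overline q n+4)$ is exactly what is needed so that the convective contribution, after Gagliardo--Nirenberg, carries a factor $(1+\|\nabla\bfv^N\|_2^2)^{\lambda}$, which cancels $g_\lambda'$ and leaves a term that can be absorbed into the dissipation or into the a priori bound supplied by Theorem \ref{thm:2.1'}. When $p^-\geq 2$ the coercive weight $(1+|\ep(\bfv^N)|)^{p(\cdot)-2}\geq 1$ yields directly $\int|\nabla^2\bfv^N|^2\dx$ after Korn, delivering statement (a). When $p^-<2$ the weight can vanish and one must descend to an $L^{p^-}$ bound: writing $|\nabla^2\bfv^N|^{p^-}\leq\big[(1+|\ep(\bfv^N)|)^{p(\cdot)-2}|\nabla\ep(\bfv^N)|^2\big]^{p^-/2}(1+|\ep(\bfv^N)|)^{(2-p^-)p^-/2}$ and applying H\"older produces the extra denominator $(1+\|\nabla\bfv^N\|_{p^-})^{2-p^-}$ appearing in (b). Closing the estimate is then a stochastic Gr\"onwall argument against the energy bound \eqref{eq:regularityestimate1}. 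The main obstacle is the simultaneous presence of the variable exponent, which generates the unavoidable $\ln$-corrections and $\nabla p$-terms, and the supercritical convective term; pinning down $\lambda$ through the relationship between $p^-$, $p^+$ and $\overline q$ in \eqref{eq:p_new} is the subtle balancing act that makes the two obstructions cancel out.
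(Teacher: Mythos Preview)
Your proposal is correct and follows the paper's argument almost step for step: test \eqref{eq:gal} with $-\Delta\bfv^N$, apply It\^{o}'s formula to $g_\lambda(\|\nabla\bfv^N\|_2^2)$, drop the nonpositive quadratic-variation correction by concavity, split the elliptic contribution into the coercive piece and the $\nabla p$-error, and treat the noise exactly as you indicate. The one place where your sketch is looser than the paper is the interpolation for the convective term: rather than interpolating with $\|\bfv^N\|_2$, the paper bounds both the convective term and the $\nabla p$-error crudely by $\int_{\mt}(1+|\nabla\bfv^N|^{\overline q})\dx$ and then controls $\|\nabla\bfv^N\|_{\overline q}^{\overline q}$ by a two-parameter Lyapunov interpolation between $\|\nabla\bfv^N\|_2$, $\|\nabla\bfv^N\|_{p^-}$ and $\|\nabla\bfv^N\|_{np^-/(n-q)}$, coupled with the Sobolev-type inequality \eqref{eq:franzwoistderbeweis?} relating the last norm to the coercive integral; after Young's inequality this yields a small multiple of the coercive term plus $\|\nabla\bfv^N\|_2^{2\lambda}(1+\|\nabla\bfv^N\|_{p^-})^{p^-}$, whose first factor cancels $g_\lambda'$ and whose remainder is bounded directly by Theorem~\ref{thm:2.1'}---no Gr\"onwall step is needed.
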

\begin{proof}
%
We start with the evolution of $\|\nabla\bfv^N(t)\|_{\lebe^2(\mt)}^2$. Applying It\^{o}'s formula to the mapping $ \bfC\mapsto \|\nabla\bfv\|_2^2$, where $\bfC=(c^1,...,c^N)$ and $\bfv$ are related through $\bfv=\sum_{k=1}^N c_k\bfw_k$.
We obtain
\begin{align*}
& \frac{1}{2}\|\nabla\bfv^N(t)\|_{\lebe^2(\mt)}^2
=\frac{1}{2}\|\nabla\mathcal P^N\bfv_0\|_{\lebe^2(\mt)}^2-\int_0^t\int_{\mt} D_\bfxi\bfS(\cdot,\ep(\bfv^N))(\partial_{\gamma}\ep(\bfv^N),\partial_{\gamma}\ep(\bfv^N))\dxs\\&-\int_0^t\int_{\mt} D_x\bfS(\cdot,\ep(\bfv^N)):\partial_{\gamma}\nabla\bfv^N\dxs+\int_0^t\int_{\mt} \Div\big(\bfv^N\otimes\bfv^N\big):\Delta\bfv^N\dxs\\
&+\int_0^t\int_{\mt}\partial_{\gamma}\bfv^N
\cdot\partial_{\gamma}\Big(\Phi(\bfv^N)\,\dd W\Big)\dx+\frac{1}{2}\int_{\mt}\int_0^t\dd\Big\langle\Big\langle\int_0^{\cdot}\partial_{\gamma} \big(\Phi(\bfv^N)\,\dd W\big)\Big\rangle\Big\rangle\dx,
\end{align*}
where the sum is taken over all $\gamma\in\{1,\dots,n\}$. 
Now we apply It\^{o}'s formula
to the mapping $ \bfC^N\mapsto g_\lambda (\|\nabla\bfv\|_2^2)$ and obtain
\begin{align*}
g_\lambda (\|\nabla\bfv^N(t)\|_2^2)&=g_\lambda (\|\nabla\bfv^N(0)\|_2^2)+\int_0^t\frac{1}{(1+\|\nabla\bfv^N\|^2_2)^\lambda}\dd \|\nabla\bfv^N\|_2^2\\&-\frac{\lambda}{2}\int_0^t\frac{1}{(1+\|\nabla\bfv^N\|^2_2)^{\lambda+1}}\dd \big\langle\big\langle\|\nabla\bfv^N\|_2^2\big\rangle\big\rangle,
\end{align*}
where we have
\begin{align*}
\int_0^t&\frac{2}{(1+\|\nabla\bfv^N\|^2_2)^\lambda}\dd \|\nabla\bfv^N\|_2^2\\=&-\int_0^t\frac{1}{(1+\|\nabla\bfv^N\|^2_2)^\lambda}\int_{\mt} D_\bfxi\bfS(\cdot,\ep(\bfv^N))(\partial_{\gamma}\ep(\bfv^N),\partial_{\gamma}\ep(\bfv^N))\dxs\\&-\int_0^t\frac{2}{(1+\|\nabla\bfv^N\|^2_2)^\lambda}\int_{\mt} D_x\bfS(\cdot,\ep(\bfv^N)):\partial_{\gamma}\nabla\bfv^N\dxs\\&+\int_0^t\frac{2}{(1+\|\nabla\bfv^N\|^2_2)^\lambda}\int_{\mt} \Div\big(\bfv^N\otimes\bfv^N\big):\Delta\bfv^N\dxs\\
&+\int_0^t\frac{2}{(1+\|\nabla\bfv^N\|^2_2)^\lambda}\int_{\mt}\partial_{\gamma}\bfv^N
\cdot\partial_{\gamma}\Big(\Phi(\bfv^N)\,\dd W\Big)\dx\\&+\int_0^t\int_{\mt}\frac{1}{(1+\|\nabla\bfv^N\|^2_2)^\lambda}\dd\Big\langle\Big\langle\int_0^{\cdot}\partial_{\gamma} \big(\Phi(\bfv^N)\,\dd W\big)\Big\rangle\Big\rangle\dx\\
&=-J_1-J_2+J_3+J_4+J_5.
\end{align*}
Moreover, there holds
\begin{align*}
-\frac{\lambda}{2}&\int_0^t\frac{1}{(1+\|\nabla\bfv^N\|^2_2)^{\lambda+1}}\dd \big\langle\big\langle\|\nabla\bfv^N\|_2^2\big\rangle\big\rangle\leq 0.
\end{align*}
$\p$-a.s. such that this term can be neglected. We start with the lower estimate
\begin{align*}
J_1
&\geq c\int_0^t\frac{1}{(1+\|\nabla\bfv^N\|^2_2)^\lambda}\int_{\mt}(1+|\ep(\bfv^N)|)^{p(\cdot)-2}
|\nabla\ep(\bfv^N)|^2\dxs\\
&\geq c\int_0^t\frac{1}{(1+\|\nabla\bfv^N\|^2_2)^\lambda}\int_{\mt}(1+|\ep(\bfv^N)|)^{p^{-}-2}
|\nabla\ep(\bfv^N)|^2\dxs.
\end{align*}
All other terms will be estimate form above. By Young's inequality we obtain
using \eqref{eq:varrho}
\begin{align*}
J_2
&\leq c\int_0^t\frac{1}{(1+\|\nabla\bfv^N\|^2_2)^\lambda}\int_{\mt}\ln(1+|\ep(\bfv^N)|)(1+|\ep(\bfv^N)|)^{p(\cdot)-1}
|\nabla\ep(\bfv^N)|\dxs\\
&\leq \kappa\int_0^t\frac{1}{(1+\|\nabla\bfv^N\|^2_2)^\lambda}\int_{\mt}(1+|\ep(\bfv^N)|)^{p(\cdot)-2}
|\nabla\ep(\bfv^N)|^2\dxs\\&+c(\kappa)\int_0^t\frac{1}{(1+\|\nabla\bfv^N\|^2_2)^\lambda}\int_{\mt}\big(1+|\nabla\bfv^N|^{\q}\big)\dxt,
\end{align*}
where $\kappa>0$ is arbitrary. For $\kappa$ small enough we will be able to absorb the corresponding term in $J_1$.
Moreover, we have
\begin{align*}
J_3&\leq \int_0^t\frac{1}{(1+\|\nabla\bfv^N\|^2_2)^\lambda} \int_{\mt}|\nabla\bfv^N|^3\dxs\\
&\leq \int_0^t\frac{1}{(1+\|\nabla\bfv^N\|^2_2)^\lambda} \int_{\mt}\big(1+|\nabla\bfv^N|^{\q}\big)\dxs
\end{align*}
using integration by parts. Finally, we obtain from \eqref{eq:phi}
\begin{align*}
J_5&=\sum_k\int_0^t\frac{1}{(1+\|\nabla\bfv^N\|^2_2)^\lambda}\bigg(\int_{\mt}\nabla g_k(\bfv^N)\dx\bigg)^2\dt\\
&\leq\,\sum_k\int_0^t\frac{1}{(1+\|\nabla\bfv^N\|^2_2)^\lambda}\int_{\mt}|\nabla g_k(\bfv^N)|^2\dx\dt\\
&\leq\,c\,\int_0^t\frac{1}{(1+\|\nabla\bfv^N\|^2_2)^\lambda}\int_{\mt}|\nabla\bfv^N|^2\dx\dt\\
&\leq\,c\,\int_0^t\frac{1}{(1+\|\nabla\bfv^N\|^2_2)^\lambda}\int_{\mt}\big(1+|\nabla\bfv^N|^{\q}\big)\dx\dt.
\end{align*}
Applying expectations (note that $\E[J_4]=0$) and choosing $\kappa$ small enough
we end up with
\begin{align}
\nonumber\E g_\lambda(\|\nabla\bfv^N(t)\|_2^2)&+\E\int_0^t\frac{1}{(1+\|\nabla\bfv^N\|^2_2)^\lambda}\int_{\mt}(1+|\ep(\bfv^N)|)^{p(\cdot)-2}|\nabla\ep(\bfv^N)|^2\dxs\\
\label{eq:final0}&\leq\,c\,\E\bigg[ g_\lambda(\|\nabla\bfv^N(0)\|_2^2)+\int_0^t\frac{1}{(1+\|\nabla\bfv^N\|^2_2)^\lambda}\int_{\mt}\big(1+|\nabla\bfv^N|^{\q}\big)\dxt\bigg].
\end{align}
The last term on the right-hand side cannot be controlled so far. In order to suitably bound $\|\nabla \vv^{N}\|_{\q}^{\q}$, let $2> q\geq n(\overline q-p^{-})/\overline q$, existence of which follows from \eqref{eq:p_new} and $\overline q>2$, and put
\begin{align}\label{eq:defalpha}
\alpha:=\frac{p^{-}(np^{-}+2q-\q n)}{2(np^{-}+\q q-\q n)}\;\;\text{so that}\;\;1-\alpha=\frac{(\q-p^{-})(np^{-}+2q-2n)}{2(np^{-}+\q q-\q n)}
\end{align}
so that, in particular, $np^{-}/(n-q)\geq \q$. By Lyapunov's interpolation inequality, we obtain 
\begin{align}
\begin{split}
\|\nabla\vv^{N}\|_{\q}\leq \|\nabla\vv^{N}\|_{2}^{\theta_{1}}\|\nabla\vv^{N}\|_{np^{-}/(n-q)}^{\theta_{2}}\\
\|\nabla\vv^{N}\|_{\q}\leq \|\nabla\vv^{N}\|_{p^-}^{\theta_{3}}\|\nabla\vv^{N}\|_{np^{-}/(n-q)}^{\theta_{4}},
\end{split}
\end{align}
where 
\begin{align*}
\theta_{1}:=\frac{2(np^{-}+\q q-\q n)}{\q(np^{-}+2q-2n)},\;\theta_{2}:=\frac{(\q-2)np^{-}}{\q(np^{-}+2q-2n)},\;\theta_{3}:=\frac{np^{-}+ \q q-\q n}{ \q q},\;\theta_{4}:=\frac{n(\q-p^{-})}{\q q}. 
\end{align*}
We then obtain 
\begin{align}\label{eq:intermediate}
\begin{split}
\|\nabla\vv^{N}\|_{\q}^{\q} & = \|\nabla \vv^{N}\|_{\q}^{\q(1-\alpha)}\|\nabla\vv^{N}\|_{\q}^{\q\alpha}\\
& \leq \|\nabla\vv^{N}\|_{2}^{\q(1-\alpha)\theta_{1}}\|\nabla\vv^{N}\|_{np^{-}/(n-q)}^{\q(1-\alpha)\theta_{2}+\q\alpha\theta_{4}}(1+\|\nabla\vv^{N}\|_{p^{-}})^{\q\alpha\theta_{3}}\\
& = \|\nabla\vv^{N}\|_{2}^{2q_{1}}(1+\|\nabla\vv^{N}\|_{p^{-}})^{q_{2}}(\|\nabla\vv^{N}\|_{np^{-}/(n-q)})^{q_{3}} =(*), 
\end{split}
\end{align}
where $q_{1},q_{2},q_{3}$ are defined in the obvious manner. To estimate $(*)$, we note that for $\mathbb{P}\otimes\mathscr{L}^{1}$-a.e. $(\omega,t)\in\Omega\times [0,T]$ there holds by Korn's inequality.
\begin{align*}
\|\nabla\uu(\omega,t,\cdot)\|_{\frac{np^{-}}{n-q}} & \leq  \,c\|\ep(\uu(\omega,t,\cdot))\|_{\frac{np^{-}}{n-q}}
\end{align*}

Next we claim that there exists a constant $C>0$ independent of $N\in\mathbb{N}$ such that 
\begin{align}\label{eq:franzwoistderbeweis?}
\begin{split}
\|\nabla\vv^{N}\|_{np^{-}/(n-q)}&\leq C\left(\int_{\mt} D_\bfxi\bfS(\cdot,\ep(\bfv^N))(\partial_{\gamma}\ep(\bfv^N),\partial_{\gamma}\ep(\bfv^N))\dx \right)^{\frac{q}{2p^{-}}}\times \\
& \times \big(1+\|\nabla\vv^{N}\|_{p^{-}})^{\frac{2-q}{2}}
\end{split}
\end{align}
holds $\p$--a.e. in $\Omega$. The estimate \eqref{eq:franzwoistderbeweis?}
is a consequence of the interpolation of $L^{\frac{np^-}{n-q}}(\mt)$ between $L^{p^-}(\mt)$ and $L^{\frac{np^-}{n-2}}(\mt)$, Sobolev's embedding $W^{1,2}(\mt)\hookrightarrow L^{\frac{2n}{n-2}}(\mt)$ (if $n=2$ we have to replace $\frac{n}{n-2}$ by an arbitrary finite exponent)
and the inequality
\begin{align*}
\big|\nabla (1+|\ep(\bfv^N)|)^{\frac{p^-}{2}}\big|^2&\leq\,c(1+|\ep(\bfv^N)|)^{\frac{p^--2}{2}}|\nabla\ep(\bfv^N)|^2\\
&\leq\,c D_\bfxi\bfS(\cdot,\ep(\bfv^N))(\partial_{\gamma}\ep(\bfv^N),\partial_{\gamma}\ep(\bfv^N)).
\end{align*}
Using \eqref{eq:franzwoistderbeweis?}, we further estimate \eqref{eq:intermediate} by use of Young's inequality for any $r>1$ and $\kappa>0$
\begin{align*}
(*) & \leq C\|\nabla\vv^{N}\|_{2}^{\q(1-\alpha)\theta_{1}}\big(1+\|\nabla\vv^{N}\|_{p^{-}})^{\frac{2-q}{2}(\q(1-\alpha)\theta_{2}+\q\alpha\theta_{4})+\q\alpha\theta_{3}}\\ & \times \left(\int_{\mt} D_\bfxi\bfS(\cdot,\ep(\bfv^N))(\partial_{\gamma}\ep(\bfv^N),\partial_{\gamma}\ep(\bfv^N))\dx \right)^{\frac{q}{2p^{-}}(\q(1-\alpha)\theta_{2}+\q\alpha\theta_{4})}\\
& \leq C(\kappa,r)\Big(\|\nabla\vv^{N}\|_{2}^{\q(1-\alpha)\theta_{1}}\big(1+\|\nabla\vv^{N}\|_{p^{-}})^{\frac{2-q}{2}(\q(1-\alpha)\theta_{2}+\q\alpha\theta_{4})+\q\alpha\theta_{3}}\Big)^{\frac{r}{r-1}}\\
& + \kappa\left(\int_{\mt} D_\bfxi\bfS(\cdot,\ep(\bfv^N))(\partial_{\gamma}\ep(\bfv^N),\partial_{\gamma}\ep(\bfv^N))\dx \right)^{\frac{q}{2p^{-}}(\q(1-\alpha)\theta_{2}+\q\alpha\theta_{4})r}.
\end{align*}
To determine the relevant parameters, we shall now require 
\begin{align}\label{eq:exprequirements}
\frac{q}{2p^{-}}(\q(1-\alpha)\theta_{2}+\q\alpha\theta_{4})r=1,\;\;\;\; \Big(\frac{2-q}{2}(\q(1-\alpha)\theta_{2}+\q\alpha\theta_{4})+\q\alpha\theta_{3}\Big)\frac{r}{r-1}=p^-.
\end{align}
Indeed \eqref{eq:exprequirements} is satisfied indeed provided $\alpha$ is defined by \eqref{eq:defalpha} and we have
\begin{align}\label{eq:determiner}
r=\frac{4}{\q n-np^{-}}\;\;\;\text{und}\;\;\;r'=\frac{4}{np^{-}-\q n+4}.
\end{align}
On the other hand, this implies 
\begin{align*}
\q(1-\alpha)\theta_{1}r' 
& = \frac{4(\q-p^{-})}{np^{-}-\q n+4}.
\end{align*}
We obtain 
\begin{align*}
(*) & \leq C(\kappa,r)\|\nabla\vv^{N}\|_{2}^{\q(1-\alpha)\theta_{1}\frac{r}{r-1}}\big(1+\|\nabla\vv^{N}\|_{p^{-}})^{p^{-}}\\
& + \kappa\int_{\mt} D_\bfxi\bfS(\cdot,\ep(\bfv^N))(\partial_{\gamma}\ep(\bfv^N),\partial_{\gamma}\ep(\bfv^N))\dx.
\end{align*}
Inserting this into \eqref{eq:final0}, choosing $\kappa$ small enough can recalling the definition of $\lambda$ yields by Korn's inequality
\begin{align}\label{eq:2211}
\begin{aligned}
\E g_\lambda(\|\nabla\bfv^N(t)\|_2^2)&+\E\int_0^t\frac{1}{(1+\|\nabla\bfv^N\|^2_2)^\lambda}\int_{\mt}(1+|\ep(\bfv^N)|)^{p(\cdot)-2}|\nabla\ep(\bfv^N)|^2\dxs\\
&\leq\,c\,\E\bigg[ g_\lambda(\|\nabla\bfv_0\|_2^2)+\int_0^t\int_{\mt}\big(1+|\nabla\bfv^N|^{p^-}\big)\dxt\bigg]\\
&\leq\,c\,\E\bigg[ g_\lambda(\|\nabla\bfv_0\|_2^2)+\int_0^t\int_{\mt}\big(1+|\ep(\bfv^N)|^{p^-}\big)\dxt\bigg]\\
&\leq\,c\,\E\bigg[ g_\lambda(\|\nabla\bfv_0\|_2^2)+\int_0^t\int_{\mt}\big(1+|\ep(\bfv^N)|^{p(\cdot)}\big)\dxt\bigg]
\end{aligned}
\end{align}
where the right-hand side is uniformly bounded by $C_1(\Lambda_0,\Lambda_\bff)$,
cp. Theorem \ref{thm:2.1'}.
 If $p^{-}\geq2$ the claim follows directly by Korn's inequality. 
If $p^-<2$ we estimate using again Korn's inequality
\begin{align*}
\|\nabla^2\bfv^N(t)\|_{p^{-}}^2&\leq\,c\bigg(\int_{\mt}|\nabla\ep(\bfv^N)|^{p^-}\dx\bigg)^{\frac{2}{p^-}}\\
&=\,c\,\bigg(\int_{\mt}(1+|\ep(\bfv^N)|)^{p^-\frac{p^--2}{2}}|\nabla\ep(\bfv^N)|^{p^-}(1+|\ep(\bfv^N)|)^{p^-\frac{2-p^-}{2}}\dx\bigg)^{\frac{2}{p^-}}\\
&\leq\,c\,\int_{\mt}(1+|\ep(\bfv^N)|)^{p^--2}|\nabla\ep(\bfv^N)|^{2}\dx\bigg(\int_{\mt}(1+|\nabla\bfv^N|)^{p^-}\dx\bigg)^{\frac{2-p^-}{p^-}}.
\end{align*}
So, the claim follows again from \eqref{eq:2211} and $p^-\leq p$.
\end{proof}

\begin{corollary}
\label{cor:frac}
Let assumptions of Theorem \ref{thm:2.1'''} be satisfied. Assume in addition that $p^->\frac{\q n}{n+2}$ if $p^-<2$. Then for any $\overline p<\min\{p^-,\frac{2n}{n-2}\}$ there is $\beta>0$ such that
\begin{align*}
\E&\bigg[\int_0^T\|\nabla\bfv^N\|^{ \overline p}_{\beta,\overline p}\dt\bigg]\leq \,C_1(\lambda_0,\Lambda_\bff)
\end{align*}
uniformly in $N$.
\end{corollary}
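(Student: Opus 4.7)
The overall strategy is to interpolate between the Lebesgue bound $\int_{0}^{T}\|\nabla\bfv^{N}\|_{\lebe^{p^{-}}}^{p^{-}}\dt$ available from Theorem \ref{thm:2.1'} together with Korn's inequality, and the weighted second-derivative control from Theorem \ref{thm:2.1'''}, in order to obtain a uniform fractional Sobolev bound.

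Concretely, I would start with the Gagliardo--Nirenberg type interpolation
\begin{align*}
\|u\|_{\sobo^{\beta,\overline p}(\mt)}\leq c\,\|u\|_{\lebe^{p^{-}}(\mt)}^{1-\beta}\,\|u\|_{\sobo^{1,q}(\mt)}^{\beta},\qquad \frac{1}{\overline p}=\frac{1-\beta}{p^{-}}+\frac{\beta}{q},
\end{align*}
valid for $\beta\in(0,1)$ with a suitable choice of upper endpoint: take $q=2$ when $p^{-}\geq 2$ (controlled by Theorem \ref{thm:2.1'''}(a)), and $q=p^{-}$ when $p^{-}<2$ (controlled by Theorem \ref{thm:2.1'''}(b), in which case $\overline p=p^{-}$ and the statement for any $\overline p<p^{-}$ follows from $\sobo^{\beta,p^{-}}\hookrightarrow\sobo^{\beta,\overline p}$ on the torus). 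For $p^{-}\geq 2$ the interpolation naturally yields the range $\overline p\in[2,p^{-})$, the remaining range $\overline p<2$ being recovered by the same embedding; the ceiling $\overline p<2n/(n-2)$ enters through the Sobolev embedding $\sobo^{1,2}\hookrightarrow\lebe^{2n/(n-2)}$, which is used to absorb $\|\nabla\bfv^{N}\|_{2}$ uniformly when the weight is opened up.

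Raising the interpolation inequality to the power $\overline p$ and integrating in $t$ gives
\begin{align*}
\int_{0}^{T}\|\nabla\bfv^{N}\|_{\sobo^{\beta,\overline p}(\mt)}^{\overline p}\dt\leq c\int_{0}^{T}\|\nabla\bfv^{N}\|_{\lebe^{p^{-}}}^{\overline p(1-\beta)}\|\nabla\bfv^{N}\|_{\sobo^{1,q}}^{\overline p\beta}\dt.
\end{align*}
Introducing the weight $W:=(1+\|\nabla\bfv^{N}\|_{2}^{2})^{\lambda}(1+\|\nabla\bfv^{N}\|_{p^{-}})^{2-p^{-}}$ (with $2-p^{-}$ replaced by $0$ when $p^{-}\geq 2$) and factorising $\|\nabla\bfv^{N}\|_{\sobo^{1,q}}^{\overline p\beta}=\big(\|\nabla\bfv^{N}\|_{\sobo^{1,q}}^{2}W^{-1}\big)^{\overline p\beta/2}\cdot W^{\overline p\beta/2}$, an application of H\"older's inequality in $t$ with the exponent pair $(2/(\overline p\beta),2/(2-\overline p\beta))$ (valid for $\overline p\beta<2$) splits the integral into the weighted integral appearing in Theorem \ref{thm:2.1'''} to the power $\overline p\beta/2$, multiplied by an integral involving only powers of $\|\nabla\bfv^{N}\|_{\lebe^{p^{-}}}$ and $\|\nabla\bfv^{N}\|_{2}$.

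The last step is to take expectations and apply H\"older's inequality in $\omega$ with the same exponents. The first factor is uniformly bounded by Theorem \ref{thm:2.1'''}, while the second factor reduces---after bounding $\|\nabla\bfv^{N}\|_{2}\leq c\|\nabla\bfv^{N}\|_{p^{-}}$ when $p^{-}\geq 2$, respectively by the Sobolev embedding $\sobo^{1,p^{-}}\hookrightarrow\lebe^{2}$ (available thanks to the additional hypothesis $p^{-}>\q n/(n+2)>2n/(n+2)$) when $p^{-}<2$---to an expression of the form $\E\int_{0}^{T}\|\nabla\bfv^{N}\|_{\lebe^{p^{-}}}^{\gamma(\beta)}\dt$ with $\gamma(\beta)\to\overline p<p^{-}$ as $\beta\to 0^{+}$; this is therefore uniformly bounded by Theorem \ref{thm:2.1'} provided $\beta$ is chosen small enough. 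The main obstacle is the careful bookkeeping of the interpolation exponents: one must simultaneously arrange $\overline p\beta<2$, $\gamma(\beta)\leq p^{-}$, and---in the delicate regime $p^{-}<2$---ensure that the extra weight $(1+\|\nabla\bfv^{N}\|_{p^{-}})^{2-p^{-}}$ is absorbed through the Sobolev step above without deteriorating the admissible range of $\overline p$.
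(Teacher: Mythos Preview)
Your strategy is sound for $p^{-}\geq 2$ and there it essentially coincides with the paper's argument: both split off the weighted quotient from Theorem~\ref{thm:2.1'''}(a) via H\"older, bound the remaining factor using $\|\nabla\bfv^{N}\|_{2}\leq c\|\nabla\bfv^{N}\|_{p^{-}}$, and finish with the energy estimate and an interpolation between $\sobo^{1,p^{-}}$ and $\sobo^{2,2}$.

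The gap is in the case $p^{-}<2$. After your H\"older split, the ``second factor'' contains the weight $W^{\overline p\beta/(2-\overline p\beta)}$, hence a positive power of $\|\nabla\bfv^{N}\|_{2}$. You propose to control this via the Sobolev embedding $\sobo^{1,p^{-}}(\mt)\hookrightarrow\lebe^{2}(\mt)$ applied to $\nabla\bfv^{N}$, but that gives
\[
\|\nabla\bfv^{N}\|_{2}\leq c\big(\|\nabla\bfv^{N}\|_{p^{-}}+\|\nabla^{2}\bfv^{N}\|_{p^{-}}\big),
\]
not an estimate purely in terms of $\|\nabla\bfv^{N}\|_{p^{-}}$. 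Consequently the second factor does \emph{not} reduce to an expression of the form $\E\int_{0}^{T}\|\nabla\bfv^{N}\|_{p^{-}}^{\gamma(\beta)}\dt$: it now contains $\|\nabla^{2}\bfv^{N}\|_{p^{-}}$, which is not bounded by Theorem~\ref{thm:2.1'} and, in your arrangement, is not absorbable either (your left-hand side is $\E\int\|\nabla\bfv^{N}\|_{\beta,\overline p}^{\overline p}$, not a second-derivative quantity).

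The paper circumvents this by reversing the order of the two steps. It first proves a closed estimate for $\E\int_{0}^{T}\|\nabla^{2}\bfv^{N}\|_{p^{-}}^{2\beta}\dt$ with a carefully chosen $\beta$: writing this as $\E\int W^{\beta}(\|\nabla^{2}\bfv^{N}\|_{p^{-}}^{2}/W)^{\beta}$ and applying H\"older yields a remainder containing $\|\nabla\bfv^{N}\|_{2}$; the latter is interpolated between $\|\nabla\bfv^{N}\|_{p^{-}}$ and $\|\nabla\bfv^{N}\|_{np^{-}/(n-p^{-})}\leq c\|\nabla^{2}\bfv^{N}\|_{p^{-}}$, and the specific choice of $\beta$ makes the resulting second-derivative contribution exactly $\kappa\,\E\int\|\nabla^{2}\bfv^{N}\|_{p^{-}}^{2\beta}$, which is absorbed into the left-hand side. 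Only \emph{after} this bound is in hand does the paper interpolate $\|\cdot\|_{1+\sigma,p^{-}}\leq\|\cdot\|_{1,p^{-}}^{1-\sigma}\|\cdot\|_{2,p^{-}}^{\sigma}$ to obtain the fractional estimate. The absorption step is the missing idea in your proposal.
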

\begin{proof}
If $p^-<2$ we set (recall that $p^->\frac{\q n}{n+2}$)
\begin{align*}
 \beta=\frac{((n+2)p^--\q n)p^-}{2((n+5)p^--\q n-(p^-)^2)}\in\Big(0,\frac{1}{2}\Big)
\end{align*}
and obtain
\begin{align}
\nonumber\E\bigg[\int_0^T\|\nabla^2\bfv^N(t)\|_{p^{-}}^{2\beta}\bigg]&=\E\bigg[\int_0^T\Big((1+\|\nabla\bfv^N(t)\|_2^2)^\lambda(1+\|\nabla\bfv^N(t)\|_{p^{-}})^{2-p^{-}}\Big)^\beta\\
\nonumber&\times\Big(\frac{\|\nabla^2\bfv^N(t)\|_{p^{-}}^2}{(1+\|\nabla\bfv^N(t)\|_2^2)^\lambda(1+\|\nabla\bfv^N(t)\|_{p^{-}})^{2-p^{-}}}\Big)^\beta\dt\bigg]\\
\nonumber&\leq\E\bigg[\int_0^T\Big((1+\|\nabla\bfv^N(t)\|_2^2)^{\frac{\lambda\beta}{1-\beta}}(1+\|\nabla\bfv^N(t)\|_{p^{-}})^{(2-p^{-})\frac{\beta}{1-\beta}}\dt\bigg]^{1-\beta}\\
\nonumber&\times\E\bigg[\int_0^T\frac{\|\nabla^2\bfv^N(t)\|_{p^{-}}^2}{(1+\|\nabla\bfv^N(t)\|_2^2)^{\lambda}(1+\|\nabla\bfv^N(t)\|_{p^{-}})^{2-p^{-}}}\dt\bigg]^\beta\\
\label{eq:frac1}&\leq \,C_1(\lambda_0,\Lambda_\bff)^\beta\E\big[I_1+I_2\big]^{1-\beta}
\end{align}
where 
\begin{align*}
I_1&=\int_0^T(1+\|\nabla\bfv^N(t)\|_{p^{-}})^{(2-p^{-})\frac{\beta}{1-\beta}}\dt,\\
I_2&=\int_0^T\|\nabla\bfv^N(t)\|_2^{\frac{2\lambda\beta}{1-\beta}}(1+\|\nabla\bfv^N(t)\|_{p^{-}})^{(2-p^{-})\frac{\beta}{1-\beta}}\dt.
\end{align*}
We can estimate $I_1$ by
\begin{align}\label{eq:frac2}
\begin{aligned}
\E[I_1]&\leq\,c\,\E\int_0^T\int_{\mt}\big(1+|\nabla\bfv^N(t)|^{p^{-}}\big)\dxt \\&\leq\,c\,\E\int_0^T\big(1+|\nabla\bfv^N(t)|^{p(\cdot)}\big)\dxt\leq\,C_1(\lambda_0,\Lambda_\bff)
\end{aligned}
\end{align}
using $(2-p^{-})\frac{\beta}{1-\beta}\leq p^-$ and Theorem \ref{thm:2.1'}.
For $I_2$ we use the interpolation
interpolation inequality
\begin{align*}
\|v\|_{2}\leq \|v\|_{p^-}^{\frac{(n+2)p^--2n}{2p^-}}\|v\|_{\frac{np^-}{n-p^-}}^{\frac{n(2-p^-)}{2p^-}},
\end{align*}
which holds for $p^-\in(\tfrac{2n}{n+2},2)$, and the continuous embedding 
\begin{align*}
W^{2,p^-}(\mt)\hookrightarrow W^{1,\frac{np^-}{n-p^-}}(\mt).
\end{align*}
As a consequence of Theorem \ref{thm:2.1'} (setting $\delta=\frac{2p^-}{n(2-p^-)}\frac{1-\beta}{\lambda}$) we can estimate $I_2$ by
\begin{align}
\nonumber
\E[I_2]&\leq\,c\,\E\int_0^T\|\nabla^2\bfv^N(t)\|_{p^-}^{\frac{n(2-p^-)}{p^-}\frac{\lambda\beta}{1-\beta}}(1+\|\nabla\bfv^N(t)\|_{p^-})^{\big[(2-p^-)+\frac{(n+2)p^--2n}{p^-}\lambda\big]\frac{\beta}{1-\beta}}\dt\\
\nonumber&\leq\,c\,\E\bigg(\int_0^T\|\nabla^2\bfv^N(t)\|_{p^-}^{2\beta}\bigg)^{\frac{1}{\delta}}\bigg(\int_0^T(1+\|\nabla\bfv^N(t)\|_{p^-})^{p^-}\dt\bigg)^{\frac{1}{\delta'}}\\
\nonumber&\leq\,\kappa\,\E\int_0^T\|\nabla^2\bfv^N(t)\|_{p^-}^{2\beta}\dt+c(\kappa)\,\E\int_0^T\int_{\mt}\big(1+|\nabla\bfv^N|^{p^-}\big)\dxt\\
\nonumber&\leq\,\kappa\,\E\int_0^T\|\nabla^2\bfv^N(t)\|_{p^-}^{2\beta}\dt+c(\kappa)\,\E\int_0^T\big(1+|\nabla\bfv^N|^{p(\cdot)}\big)\dxt\\
\label{eq:frac3}&\leq\,\kappa\,\E\int_0^T\|\nabla^2\bfv^N(t)\|_{p^-}^{2\beta}+C_1(\Lambda_0,\Lambda_\bff),
\end{align}
where $\kappa>0$ is arbitrary. Combining \eqref{eq:frac1}--\eqref{eq:frac3} and choosing $\kappa$ small enough we have shown
\begin{align}\label{eq:2311}
\begin{aligned}
\E\bigg[\int_0^T\|\nabla^2\bfv^N\|^{2\beta}_{p^-}\dt\bigg]
&\leq \,C_1(\Lambda_0,\Lambda_\bff).
\end{aligned}
\end{align}
In order to proceed we use the interpolation inequality
\begin{align*}
\|v\|_{1+\sigma,p^-}\leq \|v\|_{1,p^-}^{1-\sigma}\|v\|_{2,p^-}^{\sigma}
\end{align*}
for $\sigma=\frac{2\beta(p^--\overline p)}{\overline{p}(p^--2\beta)}$. We obtain
\begin{align*}
\E\int_0^T&\|\bfv^N\|_{1+\sigma,p^-}^{\overline{p}}\dt\leq \E\int_0^T\|\bfv^N\|_{1,p^-}^{(1-\sigma)\overline{p}}\|\bfv^N\|_{2,p^-}^{\sigma\overline{p}}\dt\\
&\leq \bigg(\E\int_0^T\|\bfv^N\|_{1,p^-}^{p^-}\dt\bigg)^{\frac{(1-\sigma)\overline{p}}{p^-}}\bigg(\E\int_0^T\|\bfv^N\|_{2,p^-}^{2\beta}\dt\bigg)^{1-\frac{(1-\sigma)\overline{p}}{p^-}}\leq\,C_1(\Lambda_0,\Lambda_\bff)
\end{align*}
as a consequence of Theorem \ref{thm:2.1'} and \eqref{eq:2311}.\\
If $p^-\geq2$ estimate \eqref{eq:2311} can be shown much easier. Indeed, we have by Theorems \ref{thm:2.1'} and \ref{thm:2.1'''}
\begin{align}\label{eq:2311}
\begin{aligned}
\E\bigg[&\int_0^T\|\nabla^2\bfv^N\|^{2\beta}_{2}\dt\bigg]=\E\bigg[\int_0^T(1+\|\nabla\bfv^N(t)\|_2^2)^{\lambda\beta}\frac{\|\nabla^2\bfv^N(t)\|_2^{2\beta}}{(1+\|\nabla\bfv^N(t)\|_2^2)^{\lambda\beta}}\dt\bigg]\\
&\leq \bigg[\E\int_0^T\frac{\|\nabla^2\bfv^N(t)\|_2^{2}}{(1+\|\nabla\bfv^N(t)\|_2^2)^{\lambda}}\dt\bigg]^\beta\bigg[\E\int_0^T(1+\|\nabla\bfv^N(t)\|_2^2)^{\frac{p^-}{2}}\dt\bigg]^{1-\beta}\\
&\leq \,C_1(\Lambda_0,\Lambda_\bff).
\end{aligned}
\end{align}
In order to proceed we use the interpolation inequality
\begin{align*}
\|v\|_{1+\sigma,\overline p}\leq \|v\|_{1,\overline p}^{1-\frac{\sigma}{s}}\|v\|_{1+s,\overline p}^{\frac{\sigma}{s}}
\end{align*}
which holds for any $0<\sigma<s$. Combining this with the embedding (recall that $\overline p<\frac{2n}{n-2}$)
\begin{align*}
W^{2,2}(\mt)\hookrightarrow W^{1+s,\overline p}(\mt),\quad s=\frac{2n-(n-2)\overline p}{2\overline p},
\end{align*}
we obtain for $\sigma=s\frac{2\beta(p^--\overline p)}{\overline p(p^--2\beta)}$
\begin{align*}
\E\int_0^T\|\bfv^N\|^{\overline p}_{1+\sigma,\overline p}\dt&\leq \E\int_0^T \|\bfv^N\|_{1,\overline p}^{\overline p(1-\frac{\sigma}{s})}\|\bfv^N\|_{2,2}^{\frac{\sigma\overline p}{s}}\dt\\
&\leq \bigg(\E\int_0^T\|\nabla\bfv^N(t)\|_{\overline p}^{p^-}\dt\bigg)^{\frac{\overline p}{p^-}(1-\frac{\sigma}{s})}\bigg(\E\int_0^T\|\nabla^2\bfv^N(t)\|_2^{2\beta}\dt\bigg)^{1-\frac{\overline p}{p^-}(1-\frac{\sigma}{s})}.
\end{align*}
The claim follows again from Theorem \ref{thm:2.1'} combined with Korn's inequality (recall that $\overline p<p^-$) and \eqref{eq:2311}.
\end{proof}

\subsection{Compactness}
\label{subsec:comp}
Before we can apply  the stochastic compactness method we need to gain some
information concerning the time regularity of $\bfv^N$. 
We go back to the system \eqref{eq:gal} and see that for any $\bfvarphi\in\hold_{\Div}^{\infty}(\tn)^{n}$ there holds
\begin{align}
\begin{split}
\int_{\tn} &\dd\bfv^N\cdot\mathcal P^N_\ell\bfvarphi\dx+\int_{\mt}\bfS(\cdot,\ep(\bfv^N)):\ep(\mathcal P^N_\ell\bfvarphi)\dx\dt\\&=\int_{\mt}\bfv^N\otimes\bfv^N:\nabla\mathcal P^N_\ell\bfvarphi\dx\dt\\&+\int_{\mt}\bff\cdot\mathcal P^N_\ell\bfvarphi\dx\dt+\int_{\mt} \Phi(\bfv^N)\,\dd W^N\cdot\mathcal P^N_\ell\bfvarphi\dx.
\end{split}
\end{align}
Here $\mathcal P^N_\ell$ denotes the orthogonal projection on $\mathcal X_N$ with respect to the $W^{\ell,2}(\mt)$ inner product, where $\ell$ is chosen such that $\sobo_{\Div}^{\ell,2}(\tn)\hookrightarrow
\sobo_{\Div}^{1,\infty}(\tn)$.
We now define for $t\in [0,T]$ the functionals $\mathcal{H}_{N}(t,\cdot)$ on $\hold_{\Div}^{\infty}(\tn)$ by 
\begin{align}\label{eq:defH}
\begin{split}
\mathcal{H}_{N}(t,\bfvarphi) & := -\int_{0}^{t}\int_{\tn}\mathbb{H}^{N}\colon \nabla\mathcal{P}^{N}\bfvarphi\dif x \dif\sigma,\qquad\bfvarphi\in\hold_{\Div}^{\infty}(\tn),
\end{split} 
\end{align}
where for $N\in\mathbb{N}$
\begin{align}\label{eq:defH1}
\mathbb{H}^{N}:= -\mathbf{S}_p(\cdot,\ep(\bfv^{N}))+\bfv^{N}\otimes\bfv^{N}-\nabla\Delta^{-1}\bff,
\end{align}
so that by Theorem~\ref{thm:2.1'} and the hypotheses collected in Definition~\ref{def:weak}, there holds 
\begin{align}\label{eq:811}
\mathbb{H}^{N}\in \lebe^{p_0}(\Omega,\mathscr{F},\mathbb{P};\lebe^{p_0}(0,T;\lebe^{p_0}(\tn)))
\end{align}
uniformly in $N\in\mathbb{N}$ for some $p_0>1$. Here, $\Delta^{-1}$ is the solution operator of the Poisson problem on the torus as has been recalled in section \ref{sec:functionspaces}. Now we claim that 
\begin{align}\label{eq:Hclaim1}
\sup_{N\in\mathbb{N}}\E\left[ \|\mathcal{H}^{N}\|_{\sobo^{1,p_{0}}([0,T];\sobo_{\Div}^{-\ell,p_{0}}(\tn))} \right]<\infty.
\end{align}
Recall that $\ell\in\mathbb{N}$ is chosen so large such that $\sobo_{\Div}^{\ell,2}(\tn)^{n}\hookrightarrow
\sobo_{\Div}^{1,\infty}(\tn)^{n}$. To see \eqref{eq:Hclaim1}, note that 
\begin{align*}
\lVert{\frac{\dif}{\dif t}\mathcal{H}^{N}(t,\cdot)\rVert}_{\lebe^{p_{0}}(0,T;\sobo_{\Div}^{-\ell,p_{0}}(\tn)))} & = \bigg\|\sup_{\|\bfvarphi\|_{\ell,p'_{0}}\leq 1}\frac{\dif}{\dif t}\mathcal{H}^{N}(t,\bfvarphi)\bigg\|_{\lebe^{p_{0}}(0,T)} \\
& = \bigg\|\sup_{ \|\bfvarphi\|_{\ell,p'_{0}}\leq 1}\int_{\tn}\mathbb{H}^{N}\colon\nabla\mathcal{P}^{N}_\ell\bfvarphi\dif x\bigg\|_{\lebe^{p_{0}}(0,T)} \\
& \leq \bigg\|\sup_{ \|\bfvarphi\|_{\ell,p'_{0}}\leq 1}\|\mathbb{H}^{N}(t,\cdot)\|_{\lebe^{p_{0}}}\|\nabla\mathcal{P}_\ell^{N}\bfvarphi\|_{\lebe^{p'_{0}}}\bigg\|_{\lebe^{p_{0}}(0,T)} \\
 & \leq C\left(\int_{0}^{T}\|\mathbb{H}^{N}(t,\cdot)\|_{\lebe^{p_{0}}}^{p_{0}}\dif\sigma\right)^{\frac{1}{p_{0}}}. 
\end{align*}
In consequence, raising the previous inequality to the $p_{0}$--th power and taking expectations in conjunction with \eqref{eq:811} gives \eqref{eq:Hclaim1}. On the other hand, we have for all $0\leq s<t\leq T$
\begin{align*}
\E\left[\lVert{\int_{0}^{t}\Phi(\bfv^{N})\dif W_{\sigma}^{N}  -\int_{0}^{s}\Phi(\bfv^{N})\dif W^{N}\rVert}_{\lebe^{2}(\tn)}^{\theta}\right]  & = \E\left[\bigg\|{\int_{s}^{t}\Phi(\bfv^{N})\dif W^{N}}_{\lebe^{2}(\tn)}^{\theta}\right]\\
& = \E\left[\lVert{\int_{s}^{t}\sum_{k=1}^{\infty}\Phi(\bfv^{N})e_{k}\dif\beta_{k}^{N}\rVert}_{\lebe^{2}(\tn)}^{\theta}\right] \\ & \leq \E\left[\lVert{\int_{s}^{t}\sum_{k=1}^{\infty}g_{k}(\bfv^{N})\dif\beta_{k}^{N}\rVert}_{\lebe^{2}(\tn)}^{2\cdot\frac{\theta}{2}}\right]\\
& =  \E\left[\Big(\int_{s}^{t}\sum_{k=1}^{\infty}\|g_{k}(\bfv^{N})\|_{\lebe^{2}(\tn)}^{2}\dif\sigma\Big)^{\frac{\theta}{2}}\right] \\ & \stackrel{\eqref{eq:phi}}{\leq} C\E\left[\Big(\int_{s}^{t}(1+\|\bfv^{N}\|_{\lebe^{2}(\tn)}^{2})\dif\sigma\Big)^{\frac{\theta}{2}}\right] \\
& =  C|t-s|^{\frac{\theta}{2}}\left(\E\left[\sup_{t\in (0,T)}(1+\|\bfv^{N}\|_{\lebe^{2}(\tn)}^{2}\right]\right)^\frac{\theta}{2} \\ &  \stackrel{\eqref{eq:regularityestimate1}}{\leq} C|t-s|^{\frac{\theta}{2}}. 
\end{align*}
At this point we are in position to apply the Kolmogorov continuity criterion to conclude that there exists $0<\kappa<1$ such that 
\begin{align}\label{eq:Hclaim2}
\sup_{N\in\mathbb{N}}\E\left[ \lVert{\int_{0}^{\cdot}\Phi(\bfv^{N})\dif W^{N}\rVert}_{\hold^{\kappa}([0,T];\lebe^{2}(\tn))} \right]<\infty. 
\end{align}
Let us note that since $\sobo_{\Div}^{\ell,p'_{0}}(\tn)\hookrightarrow \sobo_{\Div}^{1,2}(\tn)\hookrightarrow\lebe^{2}(\tn)$ and $1<p_{0}<\infty$ we have $\lebe^{2}(\tn)\hookrightarrow \sobo^{-\ell,p_{0}}(\tn)$. Hence \eqref{eq:Hclaim2} implies that $\E\left[ \lVert{\int_{0}^{\cdot}\Phi(\bfv^{N})\dif W^{N}\rVert}_{\hold^{\kappa}([0,T];\sobo^{-\ell,p_{0}}(\tn))}\right]$ is uniformly bounded in $N$. Combining this with \eqref{eq:Hclaim1}, a straightforward interpolation argument yields some $0<\mu<1$ such that 
\begin{align}\label{eq:Hclaim4}
\sup_{N\in\mathbb{N}}\E\left[\lVert{\bfv^{N}\rVert}_{C^\mu([0,T];\sobo_{\Div}^{-\ell,p_{0}}(\tn)^{n})}\right]<\infty. 
\end{align}
In view of compactness, let us now define the path space 
\begin{align}\label{eq:pathspace}
\begin{split}
\mathcal{X}&:=\mathcal X_{\bfv}\otimes \mathcal X_{p}\otimes \mathcal X_{\bff}\otimes \mathcal X_W,
\end{split}
\end{align}
where
\begin{align*}
\mathcal X_{\bfv}&:= C([0,T];\sobo_{\Div}^{-\ell,p_{0}}(\tn)^{n})\cap L^{\overline p}(0,T;W^{1,\overline p}_{\Div}(\mt)),\\
\mathcal X_{p}&:= \db{\hold^0([0,T]\times \mt)},\\
\mathcal X_{\bff}&:= L^2(0,T;W^{1,2}(\mt)),\\
\mathcal X_W&:=\hold([0,T];\mathfrak{U}_{0}).
\end{align*}
Here $\overline p$ is some fixed but arbitrary number in $\big(1,\min\{p^-,\frac{2n}{n-2}\}\big)$. 
We obtain the following.
\begin{proposition}\label{prop:bfutightness'}
The set $\{\mathcal{L}[\bfv^N,p,\bff,W];\,N\in\mathbb N\}$ is tight on $\mathcal{X}$.
\end{proposition}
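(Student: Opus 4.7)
The plan is to exploit the standard reduction that tightness of the joint law $\mathcal{L}[\bfv^N,p,\bff,W]$ on the product space $\mathcal X$ follows once each marginal is tight on its respective factor. For the triple $(p,\bff,W)$ this is trivial: by construction $p$ has law $\Lambda_p$ and $\bff$ has law $\Lambda_\bff$ independently of $N$, while $W$ is a fixed cylindrical Wiener process whose law on $C([0,T];\mathfrak{U}_0)$ does not depend on $N$ either; three single Borel probability measures on Polish spaces are automatically tight. Thus the entire burden is to show tightness of $\{\mathcal L[\bfv^N];N\in\mathbb N\}$ on $\mathcal X_{\bfv}= C([0,T];W^{-\ell,p_0}_{\Div}(\mt))\cap L^{\overline p}(0,T;W^{1,\overline p}_{\Div}(\mt))$.

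For this I would combine the two a priori estimates already available. From Corollary~\ref{cor:frac} we have the uniform-in-$N$ bound
\begin{align*}
\E\int_0^T\|\bfv^N\|_{1+\beta,\overline p}^{\overline p}\dt \le C
\end{align*}
for some $\beta>0$, and from \eqref{eq:Hclaim4} the uniform time-regularity bound
\begin{align*}
\E\|\bfv^N\|_{C^\mu([0,T];W^{-\ell,p_0}_{\Div}(\mt))} \le C
\end{align*}
for some $\mu>0$. Fix $R>0$ and consider the sublevel set
\begin{align*}
K_R := \Big\{\bfu : \|\bfu\|_{C^\mu([0,T];W^{-\ell,p_0}_{\Div}(\mt))} + \Big(\int_0^T\|\bfu\|_{1+\beta,\overline p}^{\overline p}\dt\Big)^{1/\overline p} \le R\Big\}.
\end{align*}
Markov's inequality applied to the two moment bounds above gives $\mathbb P(\bfv^N\notin K_R)\le C/R$ uniformly in $N$, so the proposition reduces to verifying that $K_R$ is relatively compact in $\mathcal X_{\bfv}$.

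The compactness of $K_R$ in $C([0,T];W^{-\ell,p_0}_{\Div}(\mt))$ is a direct Arzelà--Ascoli argument: equicontinuity follows from the $C^\mu$ bound, while equi-relative-compactness at each time $t\in[0,T]$ follows from the compact embedding $W^{1+\beta,\overline p}_{\Div}(\mt)\hookrightarrow\hookrightarrow W^{-\ell,p_0}_{\Div}(\mt)$ (which holds since the spatial domain is compact and the regularity gap is positive) together with the $L^{\overline p}_t$ bound on $\|\bfu\|_{1+\beta,\overline p}$, which via the Hölder-in-time control provides a uniform bound on $\|\bfu(t)\|_{1+\beta,\overline p}$ for a dense set of times. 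The compactness of $K_R$ in $L^{\overline p}(0,T;W^{1,\overline p}_{\Div}(\mt))$ is an instance of the Aubin--Lions--Simon lemma applied to the triple
\begin{align*}
W^{1+\beta,\overline p}_{\Div}(\mt)\;\hookrightarrow\hookrightarrow\; W^{1,\overline p}_{\Div}(\mt)\;\hookrightarrow\; W^{-\ell,p_0}_{\Div}(\mt),
\end{align*}
where the first embedding is compact (positive Bessel regularity gap) and the second is continuous (follows from $1<p_0<\infty$ and the choice of $\ell$). Combining these two compactness statements proves relative compactness of $K_R$ in $\mathcal X_{\bfv}$, hence tightness of $\{\mathcal L[\bfv^N]\}$ in $\mathcal X_{\bfv}$.

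The only subtle point, and the one I would check most carefully, is the interplay between the fractional Bessel spaces $W^{\kappa,q}$ (defined by Fourier coefficients) and the fractional Sobolev--Slobodeckij spaces typically used in Aubin--Lions. On the torus these two scales coincide for $1<q<\infty$ up to equivalent norms, so the Simon-type compactness result still applies; alternatively one can interpolate between the $C^\mu([0,T];W^{-\ell,p_0}_{\Div})$ and $L^{\overline p}(0,T;W^{1+\beta,\overline p}_{\Div})$ bounds to obtain an estimate in a Nikolski\v\i-type space $N^{s,\overline p}(0,T;W^{1,\overline p}_{\Div})$ with $s>0$, which compactly embeds into $L^{\overline p}(0,T;W^{1,\overline p}_{\Div})$.
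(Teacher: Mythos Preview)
Your argument is essentially the paper's: reduce to marginals, combine the $C^{\mu}$-in-time bound \eqref{eq:Hclaim4} with the fractional spatial bound of Corollary~\ref{cor:frac}, apply Chebyshev, and invoke an Aubin--Lions/Simon result for the $L^{\overline p}(0,T;W^{1,\overline p}_{\Div})$-component together with Arzel\`a--Ascoli for the $C([0,T];W^{-\ell,p_0}_{\Div})$-component; your treatment of $(p,\bff,W)$ as fixed laws on Polish spaces is in fact slightly cleaner than the paper's (which for $p$ instead uses the $\PPln$-moment \eqref{eq:lawp2} and the compact embedding $\PPln(Q)\hookrightarrow\hookrightarrow C^{0}$). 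One small correction: your justification of pointwise relative compactness in the Arzel\`a--Ascoli step does not work as written, since an $L^{\overline p}$-in-time bound on $\|\bfu(\cdot)\|_{1+\beta,\overline p}$ gives no \emph{uniform-in-$N$} control at any fixed $t$; the clean fix is either to use the $L^{\infty}(0,T;L^{2})$ energy bound from Theorem~\ref{thm:2.1'} (and $L^{2}(\mt)\hookrightarrow\hookrightarrow W^{-\ell,p_0}_{\Div}(\mt)$), or to first extract convergence in $L^{\overline p}(0,T;W^{1,\overline p}_{\Div})$ via Aubin--Lions and then upgrade the resulting a.e.-in-$t$ convergence to uniform convergence via the equicontinuity coming from the $C^{\mu}$ bound.
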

\begin{proof}
By a fractional version of Aubin--Lions theorem (see \cite[Thm. 5.1.22]{FlGa})
we have compactness of the embedding
\begin{align}\label{eq:cpctAmann}
\begin{aligned}
 C^\mu([0,T];W_{\Div}^{-\ell,p_0}(\mt))&\cap \lebe^{\overline p}(0,T;\sobo_{\Div}^{1+\beta,\overline p}(\tn)^{n})\\ &\hookrightarrow\hookrightarrow \lebe^{\overline p}(0,T;\sobo_{\Div}^{1,\overline p}(\tn)^{n}).
\end{aligned}
\end{align}
On the other Arcel\`a-Ascoli's theorem yields
\begin{align*}
C^\mu([0,T];W^{-\ell,p_0}_{\Div}(\mt))\hookrightarrow \hookrightarrow C([0,T];W_{\Div}^{-\ell,p_0}(\mt)).
\end{align*}
So, we obtain tightness of $\mathcal L[\bfv^N]$ on $\mathcal X_\bfv$ from \eqref{eq:Hclaim4},
Corollary \ref{cor:frac} and Tschebyscheff's inequality.
\db{Tightness of the law of $p$ on $\mathcal X_p$ follows by using \eqref{eq:lawp2} and the compact embedding
$$\PPln(Q)\hookrightarrow\hookrightarrow \hold^0([0,T]\times\mt).$$
The latter one is a simple consequence of Arcela-Ascoli's theorem.}
Finally the laws of $\bff$ and $W$ of on their corresponding path spaces are tight as being Radon measures on Polish spaces.
\end{proof}

Prokhorov's
Theorem (see \cite[Thm.~2.6]{IkWa}) implies that $\{\mathcal{L}[\bfv^N,p,\bff,W];\,N\in\mathbb N\}$ is also relatively weakly compact. This means we have a weakly convergent subsequence. 
Now we use Skorohod's
representation theorem \cite[Thm.~2.7]{IkWa}
to infer the following result.

\begin{proposition}\label{prop:skorokhod}
There exists a complete probability space $(\tilde\Omega,\tilde\mf,\tilde\prst)$ with $\mathcal{X}$-valued Borel measurable random variables $(\tilde\bfv^N,\tilde p^N,\tilde \bff^N,\tilde W^N)$, $N\in\N$, and $(\tilde\bfv,\tilde p,\tilde \bff,\tilde W)$ such that (up to a subsequence)
\begin{enumerate}
 \item the law of $(\tilde\bfv^N,\tilde p^N,\tilde \bff^N,\tilde W^N)$ on $\mathcal{X}$ is given by $\mathcal{L}[\bfv^N, p,\bff,W]$, $N\in\N$,
\item the law of $(\tilde\bfv,\tilde p,\bff,\tilde W)$ on $\mathcal{X}$ is a Radon measure,
 \item $(\tilde\bfv^N,\tilde p^N,\tilde \bff^N,\tilde W^N)$ converges $\,\tilde{\prst}$-almost surely to $(\tilde\bfv,\tilde p,\tilde \bff,\tilde W)$ in the topology of $\mathcal{X}$, i.e.
\begin{align} \label{wWS116}
\begin{aligned}
\tilde \bfv^N &\to \tilde\bfv \ \mbox{in}\ C([0,T];W_{\Div}^{-\ell,p_0}(\mt)) \ \tilde\p\mbox{-a.s.}, \\
\tilde \bfv^N &\to \tilde\bfv \ \mbox{in}\ L^{\overline p}(0,T;W^{1,\overline p }_{\Div}(\mt)) \ \tilde\p\mbox{-a.s.}, \\
\tilde p^N &\to \tilde p \ \mbox{in}\ C^0([0,T]\times\mt) \ \tilde\p\mbox{-a.s.}, \\
\tilde \bff^N &\to \tilde \bff \ \mbox{in}\ L^2(0,T;W^{1,2}(\mt)) \ \tilde\p\mbox{-a.s.}, \\
\tilde W^N &\to \tilde W \ \mbox{in}\ C([0,T]; \mathfrak{U}_0 )\ \tilde\p\mbox{-a.s.}
\end{aligned}
\end{align}
\end{enumerate}
\end{proposition}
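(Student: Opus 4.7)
The plan is to invoke the classical Prokhorov--Skorokhod machinery once tightness of the joint law has been secured in Proposition \ref{prop:bfutightness'}. First I would observe that the path space $\mathcal X$ introduced in \eqref{eq:pathspace} is a countable product of Polish spaces: each of $C([0,T];\sobo_{\Div}^{-\ell,p_{0}}(\tn))$, $\lebe^{\overline p}(0,T;\sobo_{\Div}^{1,\overline p}(\tn))$, $\hold^0([0,T]\times \mt)$, $\lebe^{2}(0,T;\sobo^{1,2}(\tn))$ and $\hold([0,T];\mathfrak U_{0})$ is a separable Banach or Fr\'echet space, hence Polish. Since $\mathcal X_\bfv$ is the intersection of two Polish spaces endowed with the sum metric, it is Polish as well, and therefore so is $\mathcal X$.

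Given tightness of $\{\mathcal L[\bfv^N,p,\bff,W]\}_{N\in\mathbb N}$ on $\mathcal X$, Prokhorov's theorem (see \cite[Thm.~2.6]{IkWa}) yields a subsequence -- still indexed by $N$ for notational convenience -- along which $\mathcal L[\bfv^N,p,\bff,W]$ converges weakly to some Borel probability measure $\Lambda_\infty$ on $\mathcal X$. Since $\mathcal X$ is Polish, $\Lambda_\infty$ is automatically a Radon measure, which gives assertion (b).

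Next I would apply Skorokhod's representation theorem in its Jakubowski--Skorokhod version on Polish spaces (as formulated in \cite[Thm.~2.7]{IkWa}). This produces a complete probability space $(\tilde\Omega,\tilde{\mathscr F},\tilde{\mathbb P})$ and $\mathcal X$-valued Borel measurable random variables $(\tilde\bfv^N,\tilde p^N,\tilde\bff^N,\tilde W^N)$ and $(\tilde\bfv,\tilde p,\tilde\bff,\tilde W)$ such that the laws coincide with those of the original quadruples -- giving (a) -- and such that the new sequence converges $\tilde{\mathbb P}$-almost surely in the topology of $\mathcal X$. Unwinding the product structure of $\mathcal X$ then yields precisely the four modes of convergence listed in \eqref{wWS116}, settling assertion (c).

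I do not anticipate any genuine obstacle here, since Proposition \ref{prop:bfutightness'} already provides the decisive input. The only technical point to be careful about is the Polish structure of the intersection space $\mathcal X_{\bfv}$: one should verify that the sum metric makes it into a separable completely metrisable space before appealing to the Skorokhod theorem, which is an elementary exercise once one recalls that both factors are themselves separable Banach spaces embedded continuously into a common larger Polish space.
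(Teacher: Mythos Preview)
Your proposal is correct and follows essentially the same route as the paper: the authors simply note that tightness from Proposition~\ref{prop:bfutightness'} together with Prokhorov's theorem \cite[Thm.~2.6]{IkWa} gives a weakly convergent subsequence, and then invoke Skorokhod's representation theorem \cite[Thm.~2.7]{IkWa} to obtain the new probability space and the a.s.\ convergent variables. Your additional remarks on the Polish structure of $\mathcal X_{\bfv}$ are a welcome clarification that the paper leaves implicit.
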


\subsection{Conclusion} 
\label{subsec:con}
The variables $\tilde\bfv,\tilde p,\tilde\bff,\tilde W$ are progressively measurable with respect to their canonical filtration, namely, 
$$\tilde\mf_t:=\sigma\bigg(\sigma[\tilde\bfv\db{|_{[0,t]}}]\cup\sigma[\tilde p\db{|_{[0,t]}}]\cup \sigma[\tilde\bff\db{|_{[0,t]}}]\cup\bigcup_{k=1}^\infty\sigma[\tilde  W_k\db{|_{[0,t]}}]\cup\db{\{N\in\tilde\mf:\tilde\p(N)=0\}}\bigg),\quad t\in[0,T].$$
In view of Lemma \cite[Chapter 2, Lemma 2.1.35]{BFHbook}, the process $\tilde W$ is a cylindrical Wiener processes with respect to its canonical filtration. It follows from Corollary \cite[Chapter 2, Corollary 2.1.36]{BFHbook} that $\tilde W$ is a cylindrical Wiener process with respect to $(\tilde\mf_t)_{t\geq 0}$.\\
Modifying slightly the proof, the result of \cite[Chapter 2, Theorem 2.9.1]{BFHbook} remains valid in the current setting. Hence, as a consequence of the equality of laws from Proposition~\ref{prop:skorokhod}, the approximate equation \eqref{eq:gal} is satisfied on the new probability space, i.e. we have
\begin{align*}
\int_{\mt} &\tilde\bfv^N\cdot\bfw_k\dx+\int_0^t\int_{\mt}\mu(1+|\ep(\tilde\bfv^N)|)^{\tilde p^N(\cdot)-2}\ep(\tilde\bfv^N):\ep(\bfw_k)\dx\dt\\&=\int_{\mt} \tilde\bfv^N(0)\cdot\bfw_k\dx+\int_0^t\int_{\mt}\tilde\bfv^N\otimes\tilde\bfv^N:\nabla\bfw_k\dx\dt\\&+\int_0^t\int_{\mt}\tilde\bff^N\cdot\bfw_k\dx\dt+\int_0^t\int_{\mt} \Phi(\tilde\bfv^N)\,\dd \tilde W^N\cdot\bfw_k\dx
\end{align*}
$\tilde\p$-a.s. for all $t\in[0,T]$. Using the convergence from \eqref{wWS116} it is easy to pass to the limit and we obtain
\begin{align}\label{eq:LIMIT}
\begin{aligned}
\int_{\mt}\tilde\bfv(t)\cdot\bfvarphi\dx &+\int_0^t\int_{\mt}\mu(1+|\ep(\tilde \bfv)|)^{\tilde p(\cdot)-2}\ep(\tilde\bfv):\ep(\bfphi)\dxs\\&=\int_{\mt}\tilde\bfv(0)\cdot\bfvarphi\dx+\int_0^t\int_{\mt}\tilde \bfv\otimes\tilde\bfv:\ep(\bfphi)\dxs\\
&+\int_{\mt}\int_0^t\tilde \bff\cdot\bfphi\dxs+\int_{\mt}\int_0^t\Phi(\tilde\bfv)\,\dd \tilde W\cdot \bfvarphi\dx
\end{aligned}
\end{align}
for all $\bfvarphi\in C^\infty_{\Div}(\mt)$ and all $t\in[0,T]$ $\tilde\p$-a.s. where, for the limit passage in the stochastic integral, we use \cite[Lem.~2.1]{debussche1}.

\subsection{\db{Stochastically strong solutions}}
\label{subsec:path}
Let us start by showing pathwise uniqueness. 
\begin{proposition}[Pathwise uniqueness]
\label{prop:unique}
Let the assumptions of Theorem \ref{thm:main3} be valid. In particular, we suppose $p^-\geq \frac{n+2}{2}$.
Let $\bfv^1$, $\bfv^2$ be two \db{weak stochastically strong solutions} to \eqref{0.4}--\eqref{0.3} in the sense of Definition \ref{def:strsol} defined on the same stochastic basis with the same Wiener process $W$, the same forcing $\bff$ and the same exponent $p$. If
$$\mathbb{P} \left[  \bfv^1(0) = \bfv^2(0) \right] = 1,$$
then
\[
\mathbb{P} \left[  \bfv^1(t) = \bfv^2(t),  \ \mbox{for all}\ t \in [0,T]\right] = 1.
\]
\end{proposition}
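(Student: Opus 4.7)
The plan is to derive an Itô-type energy identity for the difference $\bfw := \bfv^1-\bfv^2$ and to close a Gronwall estimate for $\|\bfw(t)\|_{\lebe^2(\mt)}^2$. The process $\bfw$ is divergence-free with $\bfw(0)=0$ $\p$-a.s., and (at least formally) satisfies
\begin{align*}
\dd\bfw &=\Div\big(\bfS_p(\cdot,\ep(\bfv^1))-\bfS_p(\cdot,\ep(\bfv^2))\big)\dt -\Div\big(\bfv^1\otimes\bfv^1-\bfv^2\otimes\bfv^2\big)\dt \\
&\quad -\nabla(\pi^1-\pi^2)\dt+\big(\Phi(\bfv^1)-\Phi(\bfv^2)\big)\dd W.
\end{align*}
Since the weak stochastically strong solutions only satisfy the equation tested against $\bfvarphi\in\hold^\infty_{\Div}(\mt)$, I would use the standard procedure of applying It\^o's formula to $\|\mathcal{P}^N\bfw(t)\|_2^2$ for the Galerkin-type projection and then pass to the limit $N\to\infty$, exploiting the extra regularity $\bfw\in\lebe^{p^-}(0,T;\sobo^{1,p^-}(\mt))$ available in the current assumption range.

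After the limit passage, the pressure term drops (divergence-freeness of test) and we arrive at
\begin{align*}
\|\bfw(t)\|_{2}^2+2\int_0^t\mathcal I_1\ds=-2\int_0^t\mathcal I_2\ds+\int_0^t\mathcal I_3\ds+M(t),
\end{align*}
where $\mathcal I_1:=\int_\mt[\bfS_p(\cdot,\ep(\bfv^1))-\bfS_p(\cdot,\ep(\bfv^2))]:\ep(\bfw)\dx\geq 0$ by monotonicity of $\bfS_p$; $\mathcal I_2:=\int_\mt(\bfw\cdot\nabla)\bfv^1\cdot\bfw\dx$ after rewriting the convective difference via $\Div\bfv^2=0$; $\mathcal I_3:=\sum_k\|g_k(\bfv^1)-g_k(\bfv^2)\|_2^2\leq L\|\bfw\|_2^2$ by \eqref{eq:phi}; and $M$ is a local martingale arising from the stochastic integral. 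Since $p^-\geq\frac{n+2}{2}\geq 2$ for $n\in\{2,3\}$, the standard monotonicity estimate sharpens to
\begin{align*}
\mathcal I_1\geq c\int_\mt\big(1+|\ep(\bfv^1)|+|\ep(\bfv^2)|\big)^{p(\cdot)-2}|\ep(\bfw)|^2\dx\geq c\|\nabla\bfw\|_2^2,
\end{align*}
using Korn's inequality in the last step.

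The main obstacle, and the step that really uses the condition $p^-\geq\frac{n+2}{2}$, is controlling $\mathcal I_2$. The strategy is Hölder followed by Gagliardo--Nirenberg interpolation: write $|\mathcal I_2|\leq \|\nabla\bfv^1\|_{p^-}\|\bfw\|_{2(p^-)'}^2$ with $(p^-)':=p^-/(p^--1)$, then interpolate
\begin{align*}
\|\bfw\|_{2(p^-)'}\leq c\|\bfw\|_2^{\alpha}\|\nabla\bfw\|_2^{1-\alpha}
\end{align*}
with $\alpha\in(0,1)$ dictated by scaling. The precise value of $\alpha$ and Young's inequality (tuned so that we can absorb a small multiple of $\|\nabla\bfw\|_2^2$ into the dissipation on the left) produce
\begin{align*}
|\mathcal I_2|\leq \tfrac{c}{2}\|\nabla\bfw\|_2^2+K(s)\|\bfw\|_2^2,\qquad K(s)=C\|\nabla\bfv^1(s)\|_{p^-}^{\gamma}
\end{align*}
for an exponent $\gamma$ with $\gamma\leq p^-$; the latter is equivalent to the algebraic inequality $p^2(n+2)\geq 2n(p+1)$ at $p=p^-=\tfrac{n+2}{2}$, which is checked directly for $n=2,3$. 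In particular, $K\in\lebe^1(0,T)$ $\p$-a.s. by the a priori bound $\nabla\bfv^1\in\lebe^{p^-}(0,T;\lebe^{p^-}(\mt))$.

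Combining the estimates yields the pathwise inequality
\begin{align*}
\|\bfw(t)\|_{2}^2\leq \int_0^t\big(L+K(s)\big)\|\bfw(s)\|_{2}^2\ds+M(t).
\end{align*}
Applying the stochastic Gronwall lemma (alternatively: a stopping-time argument combined with Burkholder--Davis--Gundy to kill $M$, then taking expectation and Gronwall on $t\mapsto\E\|\bfw(t\wedge\tau_R)\|_2^2$, letting $R\uparrow\infty$) gives $\|\bfw(t)\|_{2}^2=0$ simultaneously for all $t\in[0,T]$ with probability one. This is precisely the claimed pathwise uniqueness. The subtle point throughout is Step 3: the threshold $p^-\geq\frac{n+2}{2}$ is exactly the one guaranteeing that the convective term can be split between a piece absorbed into the dissipation and a piece controlled by $\|\bfw\|_2^2$ with an $\lebe^1_t$-integrable weight.
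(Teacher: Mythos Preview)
Your proof follows essentially the same route as the paper: It\^o's formula for $\|\bfw\|_2^2$, monotonicity of $\bfS_p$ (with $p^-\geq 2$) plus Korn to produce the $\|\nabla\bfw\|_2^2$ dissipation, H\"older plus Gagliardo--Nirenberg and Young on the convective term, and the Lipschitz bound on the noise correction. Two minor points: the paper closes via It\^o applied to $e^{-\int_0^t G\,ds}\|\bfw\|_2^2$ (the Schmalfuss trick) rather than a stochastic Gronwall lemma, and your algebraic characterisation of $\gamma\leq p^-$ is off --- the exponent emerging from the interpolation/Young step is $\gamma=\tfrac{2p^-}{2p^--n}$, and $\gamma\leq p^-$ is directly equivalent to $p^-\geq\tfrac{n+2}{2}$.
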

\begin{proof}
We set $\bfw=\bfv^1-\bfv^2$ and apply It\^{o}'s formula
to $\bfw\mapsto \frac{1}{2}\int_{\mt}|\bfw|^2\dx$ (recall that by our assumptions on $p^-$ the term $\int_{\mt}\bfv\otimes\bfv:\nabla\bfv\dx$ is well-defined).
\db{This procedure can be made rigorous by applying a regularization to the equation for $\bfw$. Eventually, the standard one-dimensional It\^o formula can be applied to $|\bfw_\varrho|^2$ pointwise in $x$, where $\varrho$ is the regularization parameter.
Smooth approximations converge in $L^p(Q)$ and $L^{p'}(Q)$ as we have $p\in\PPln(Q)$ $\p$-a.s. by assumption, cf. \cite[Thm. 9.1.7]{DiHaHaRu}}. We obtain
using $\bfw(0)=0$
\begin{align*}
\frac{1}{2}\|\bfw(t)\|_{\lebe^2(\mt)}^2
&=-\int_0^t\int_{\mt}\Big(\bfS_p(\cdot,\ep(\bfv^1))-\bfS_p(\cdot,\ep(\bfv^2))\Big):\ep(\bfv^1-\bfv^2)\dxs\\
&+\int_0^t\int \big((\nabla \bfv^1)\bfv^1-(\nabla \bfv^2)\bfv^2)\cdot\bfw\dx\\
&+\frac{1}{2}\int_{\mt}\int_0^t\,\dd\Big\langle\Big\langle\int_0^\cdot\big(\Phi(\bfv^1)-\Phi(\bfv^2)\big)\,\dd W\Big\rangle\Big\rangle\dx\\
&+\int_{\mt}\int_0^t\bfw\cdot \big(\Phi(\bfv^1)-\Phi(\bfv^2)\big)\,\dd W\dx.
\end{align*}
By monotonicity of $\bfS_p$ the first term on the right-hand side is non-negative and we have by Korn's inequality
\begin{align*}
\int_{\mt}\Big(\bfS_p(\cdot,\ep(\bfv^1))-\bfS_p(\cdot,\ep(\bfv^2))\Big):\ep(\bfv^1-\bfv^2)\dx\geq\,\mu\|\ep(\bfw)\|_2^2\geq\,\tfrac{\mu}{c}\|\nabla\bfw\|_2^2
\end{align*}
as $p^-\geq2$. The critical part is the term arising from the convective term. Here, we follow ideas of \cite{MNRR}[Thm. 4.29] and write
\begin{align*}
\int_{\mt} \big((\nabla \bfv^1)\bfv^1-(\nabla \bfv^2)\bfv^2)\cdot(\bfv^1-\bfv^2)\dx=\int_{\mt} (\nabla \bfv^1)\bfw\cdot \bfw\dx\leq\|\nabla\bfv^1\|_{p^-}\|\bfw\|^2_{\frac{2p^-}{p^--1}}.
\end{align*}
Now, we use the interpolation
$$\|v\|_q\leq\|v\|_2^\alpha\|\nabla v\|_2^{1-\alpha},\quad \alpha=\frac{2n-q(n-2)}{2q},$$
valid for all $q\in[2,\frac{2n}{n-2}]$ if $n\geq3$ and $q\in[2,\infty)$ if $n=2$, cp. \cite{MNRR}[Lemma 4.35]. Choosing $q=\frac{2p^-}{p^--1}$ we obtain
\begin{align*}
\int_{\mt} \big((\nabla \bfv^1)\bfv^1-(\nabla \bfv^2)\bfv^2)\cdot(\bfv^1-\bfv^2)\dx&\leq \|\nabla\bfv^1\|_{p^-}\|\bfw\|^{\frac{2p^--n}{p^-}}_2\|\nabla\bfw\|^{\frac{n}{p^-}}_2\\
&\leq\mu\|\nabla \bfw\|_2^2+c(\mu)\|\nabla\bfv^1\|_{p^-}^{\frac{2p^-}{2p^--n}}\|\bfw\|^2_2
\end{align*}
using also Young's inequality.
Finally, we estimate the correction term by
\begin{align*}
\int_{\mt}\int_0^t\,\dd\Big\langle\Big\langle\int_0^\cdot\big(\Phi(\bfv^1)-\Phi(\bfv^2)\big)\,\dd W\Big\rangle\Big\rangle\dx&=\sum_{k=1}^\infty\int_0^t\bigg(\int_{\mt}\big(g_k(\bfv^1)-g_k(\bfv^2)\big)\dx\bigg)^2\ds\\
&\leq\sum_{k=1}^\infty\int_0^t\int_{\mt}\big|g_k(\bfv^1)-g_k(\bfv^2)\big|^2\dx\ds\\
&\leq\,\int_0^t\int_{\mt}|\bfv^1-\bfv^2|^2\dxs
\end{align*}
using \eqref{eq:phi}.
Summarising, we obtain 
\begin{align}\label{eq:uniquenessbound}
\dif\|\bfw\|_{\lebe^{2}}^{2}\leq \,c\Big(\|\nabla \bfv^1\|_{p^-}^{\frac{2p^-}{2p^--n}}+1\Big)\|\bfw\|_{2}^{2}\dt+\int_{\mt}\bfw\cdot \big(\Phi(\bfv^1)-\Phi(\bfv^2)\big)\dif W\dx
\end{align}
 for some finite constant $c>0$. We now define $G\colon\Omega\times[0,T]\to\R$ by $$G(\omega,t):=c\Big(\|\nabla \bfv^1(\omega,t)\|_{p^-}^{\frac{2p^-}{2p^--n}}+1\Big)$$ so that in particular $G\in\lebe^{1}(0,T)$ for $\p$--a.e. $\omega\in\Omega$. This is a consequence of $\frac{2p^-}{2p^--n}\leq p^-$ (which follows from the assumption $p^-\geq\frac{n+2}{n}$) and $\nabla\bfv^1\in L^{p^-}(Q)$ $\p$-a.s. (which follows from $\ep(\bfv^1)\in L^{p(\cdot)}(Q)$ $\p$-a.s. and Korn's inequality). We then obtain by use of It\^{o}'s formula (similar to \cite{schm})
\begin{align}
\begin{split}
\dif\left(\e^{-\int_{0}^{t}G\dif s}\|\bfw\|_{L^{2}}^{2}\right) & \,\,\,= -G\e^{-\int_{0}^{t}G\dif s}\|\bfw\|_{L^{2}}^{2}\dif t + \e^{-\int_{0}^{t}G\dif s}\dif\|\bfw\|_{L^{2}}^{2}\\
&\stackrel{\eqref{eq:uniquenessbound}}{\leq}  \e^{-\int_{0}^{t}G\dif s}\int_{\mt}\bfw\cdot \big(\Phi(\bfv^1)-\Phi(\bfv^2)\big)\dif W\dx
\end{split}
\end{align}
by definition of $G$. Now we apply the expectation to both sides of the inequality and consequently obtain
\begin{align*}
\E\left[\e^{-\int_{0}^{t}G\dif s}\|\bfw\|_{L^{2}}^{2}\right]=0. 
\end{align*}
Consequently we obtain $\bfv^1=\bfv^2$ $\p$-a.s. and the proof of Proposition \ref{prop:unique} is complete.
\end{proof}
Based on the pathwise uniqueness we will employ 
the Gy\"{o}ngy--Krylov characterization of convergence in probability introduced in \cite{krylov}. It applies to situations when pathwise uniqueness and existence of a martingale solution are valid and allows to establish existence of a \db{stochastically strong solution}. We consider two sequences $(N_n),(N_m)\subset \N$ diverging to infinity. Let
$\bfv^n:=\bfv^{N_n}$ and $\bfv^m:=\bfv^{N_m}$.
We consider the collection of joint laws of $(\bfv^n,\bfv^m,p,\bff,W)$
on the extended path space
$$\mathcal{X}^J=\mathcal X_{\bfv}^2\otimes \mathcal X_{p}\otimes \mathcal X_{\bff}\db{\otimes}\mathcal{X}_W,$$ 
Similarly to Proposition \ref{prop:bfutightness'} we obtain the following result.

\begin{proposition}\label{prop:tight}
The set
$$\{\mathcal L[\bfv^n,\bfv^m,p,\bff,W];\,n,m\in\mn\}$$
is tight on $\mathcal{X}^J$.
\end{proposition}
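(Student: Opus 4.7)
The plan is to reduce Proposition~\ref{prop:tight} to Proposition~\ref{prop:bfutightness'} via the classical fact that a family of probability laws on a finite product of Polish spaces is tight if and only if each of its marginal families is tight. Since $\mathcal X_\bfv$, $\mathcal X_p$, $\mathcal X_\bff$ and $\mathcal X_W$ are Polish (each a separable Banach or Fréchet space, or a closed subspace thereof) so is their product $\mathcal X^J$, and the equivalence applies.

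Concretely, I would argue as follows. Fix $\varepsilon>0$. By Proposition~\ref{prop:bfutightness'}, the family $\{\mathcal L[\bfv^N]\}_{N\in\mathbb N}$ is tight on $\mathcal X_\bfv$; so there is a compact $K_\bfv\subset \mathcal X_\bfv$ with
\begin{align*}
\sup_{N\in\mathbb N}\mathbb P\bigl[\bfv^N\notin K_\bfv\bigr]\leq \varepsilon/5.
\end{align*}
Applying this bound separately to indices $N_n$ and $N_m$ gives
\begin{align*}
\mathbb P\bigl[\bfv^n\notin K_\bfv\bigr]+\mathbb P\bigl[\bfv^m\notin K_\bfv\bigr]\leq 2\varepsilon/5
\end{align*}
uniformly in $n,m$. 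The laws of $p$, $\bff$ and $W$ do not depend on $N$, so they are single Radon probability measures on the Polish spaces $\mathcal X_p$, $\mathcal X_\bff$, $\mathcal X_W$ respectively, and are therefore tight in the trivial sense; pick compact sets $K_p$, $K_\bff$, $K_W$ carrying mass at least $1-\varepsilon/5$. (In fact, tightness of $\mathcal L[p]$ and $\mathcal L[\bff]$ on these spaces was already used inside the proof of Proposition~\ref{prop:bfutightness'}.) Set
\begin{align*}
K := K_\bfv\times K_\bfv\times K_p\times K_\bff\times K_W\subset \mathcal X^J,
\end{align*}
which is compact as a finite product of compacts. A union bound then yields
\begin{align*}
\sup_{n,m\in\mathbb N}\mathbb P\bigl[(\bfv^n,\bfv^m,p,\bff,W)\notin K\bigr]\leq \varepsilon,
\end{align*}
which is the asserted tightness.

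There really is no serious obstacle here: the entire content is that tightness on a product is marginal-wise and that the marginal tightness for the velocity component has already been proved in Proposition~\ref{prop:bfutightness'} uniformly in the Galerkin index, while the remaining marginals do not move with $(n,m)$. The step that required work, namely the uniform fractional estimates of Corollary~\ref{cor:frac} together with the time-regularity bound \eqref{eq:Hclaim4}, is thus used only through the conclusion of Proposition~\ref{prop:bfutightness'} and does not need to be revisited.
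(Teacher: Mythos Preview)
Your argument is correct and is exactly what the paper has in mind: the paper gives no separate proof but simply records that Proposition~\ref{prop:tight} follows ``similarly to Proposition~\ref{prop:bfutightness'}'', and your product/marginal argument is the standard way to make that sentence precise. In particular, the only nontrivial marginal tightness---that of each velocity component on $\mathcal X_\bfv$---is already supplied uniformly in the Galerkin index by Proposition~\ref{prop:bfutightness'}, while the remaining marginals are fixed Radon laws, so nothing further is needed.
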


Let us take any subsequence $(\bfv^{n_k},\bfv^{m_k},p,\bff,W)$. By the Skorokhod representation theorem we infer (for a further subsequence but without loss of generality we keep the same notation) the existence of a probability space $(\bar{\Omega},\bar{\mf},\bar{\prst})$ with a sequence of random variables $(\hat\bfv^{n_k},\check\bfv^{m_k},\bar{p}_k,\bar{\bff}_k,\bar{W}_k)$
conver\-ging almost surely in $\mathcal{X}^J$ to a random variable $(\hat\bfv,\check\bfv,\bar p,\bar\bff,\bar{W})$.
Moreover,
$$\mathcal L[\hat\bfv^{n_k},\check\bfv^{m_k},\bar p^k,\bar\bff^k,\bar{W}^k]=\mathcal L[\bfv_{n_k},\bfv_{m_k},p,\bff,W]$$
on $\mathcal{X}^J$ for all $k\in\mathbb N$.
%
Observe that in particular, $\mathcal L[\bfv_{n_k},\bfv_{m_k},\bar p^k,\bar\bff^k,\bar{W}^k]$ converges weakly to the measure $\mathcal L[\hat\bfv,\check\bfv,\bar p,\bar\bff,\bar{W}]$.
As in \eqref{eq:LIMIT} we can
show that $(\hat\bfv,\bar p,\bar \bff,\bar W)$ and $(\check\bfv,\bar p,\bar \bff,\bar W)$ are weak martingale solutions to \eqref{0.4}--\eqref{0.3} defined on the same stochastic basis $(\bar{\Omega},\bar{\mf},(\bar{\mf}_t),\bar{\prst})$, where $(\bar{\mf}_t)_{t\geq0}$ is the $\bar\prst$-augmented canonical filtration of $(\hat\bfv,\check\bfv,\bar p,\bar \bff,\bar W)$.
We employ the pathwise uniqueness result from Proposition \ref{prop:unique}. Indeed, it follows from our assumptions on the approximate initial laws $\Lambda_0$ that $\hat{\bfv}(0)=\check{\bfv}(0)=1$ $\bar\p$-a.s. Therefore, the solutions $\hat\bfv$ and $\check\bfv$ coincide $\bar\p$-a.s. and we have
\begin{align*}
\mathcal L&[\hat\bfv,\check\bfv,\bar{W}]\Big((\bfv_1,\bfv_2,p,\bff,W)\in\mathcal X^J:\;\bfv_1=\bfv_2\Big)
=\bar{\prst}(\hat{\bfv}=\check{\bfv})=1.
\end{align*}
Now, we have all in hand to apply the Gy\"ongy--Krylov theorem.
It implies that the original sequence $\bfv^N$ defined on the initial probability space $(\Omega,\mf,\prst)$ converges in probability in the topology of $\mathcal{X}_{\bfv}$ to the random variable $\bfv$. Therefore, we finally deduce that $\bfv$ is a weak \db{stochastically strong} solution to \eqref{0.4}--\eqref{0.3}.
\hfill $\Box$

\section{Analytically Strong Solutions}
\label{sec:2d}

\subsection{A--Priori Bounds}
In this section we establish the existence result, Theorem \ref{thm:main}. We begin with a strengthening of the a--priori estimate given by Theorem~\ref{thm:2.1'}. Note that we work under the additional assumption that either we have
\begin{itemize}
\item[(i)] $n=2$ and $1<p^-\leq  p^+< 4$ or;
\item[(ii)] $n=3$ and $\frac{11}{5}<p^-\leq p^+\leq p^-+\frac{4}{5}$.
\end{itemize}
\begin{theorem}\label{thm:2.1''}
Let the assumptions of Theorem \ref{thm:main2} be satisfied. Let $\bfv^N$ be the Galerkin approximation constructed in Section \ref{sec:galerkin}.
Then there exists a constant $c>0$ such that
\begin{align}\label{eq:2Duniformest}
\begin{split}
\E&\bigg[\sup_{t\in(0,T)}\int_{\mt} |\nabla\bfv^N(t)|^2\dx+\int_{\Q} |\nabla_{\bfxi}\bfF_p(\cdot,\ep(\bfv^N))|^{2}\dxt\bigg]\\&\leq c\,\Big(
\int_{\lebe^2_{\Div}(\mt)}\big\|\bfu\big\|_{\lebe^2(\mt)}^{2}\,\dd\Lambda_0(\bfu),
\int_{\lebe^2(\Q)}\big\|\bfg\big\|_{\lebe^2(0,T;\sobo^{1,2}(\mt))}^{2}\,\dd\Lambda_\bff(\bfg)\Big)
\end{split}
\end{align}
uniformly in $N\in\mathbb{N}$. 
\end{theorem}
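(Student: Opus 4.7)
The strategy is a quantitative refinement of the argument behind Theorem~\ref{thm:2.1'''}: we now aim for an unweighted $\lebe^{2}$--bound on $\nabla\bfv^N$ together with an $\lebe^{2}$--bound on $\nabla\bfF_p(\cdot,\ep(\bfv^N))$, with no normalising denominator. Since the Galerkin field $\bfv^N$ lies in a finite--dimensional space, one applies It\^o's formula to the smooth quadratic functional $\bfC\mapsto \tfrac12\|\nabla(\bfC\cdot\bfomega^N)\|_{\lebe^{2}}^{2}$, producing the identity
\begin{align*}
\tfrac12\|\nabla\bfv^N(t)\|_2^2+\int_0^t\! A(\bfv^N)\,\ds
=\tfrac12\|\nabla\mathcal P^N\bfv_0\|_2^2+I_1+I_2+I_3+M(t)+\tfrac12 Q(t),
\end{align*}
where $A(\bfv^N):=\int_\mt D_\bfxi\bfS_p(\cdot,\ep(\bfv^N))(\partial_\gamma\ep(\bfv^N),\partial_\gamma\ep(\bfv^N))\dx$ dominates $\|\nabla\bfF_p(\cdot,\ep(\bfv^N))\|_2^{2}$ from below; $I_1$ is the error from differentiating the $x$--dependent exponent $p$; $I_2=\int_0^t\!\int_{\mt}\Div(\bfv^N\otimes\bfv^N)\cdot\Delta\bfv^N\dxs$ is the convective term; $I_3$ is the force coupling; and $M$, $Q$ are the stochastic integral and the It\^o correction.

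The terms $I_1$, $I_3$, $M$, $Q$ are handled exactly as in the proof of Theorem~\ref{thm:2.1'''}. Using $|D_x\bfS_p|\leq c\|\nabla p\|_\infty\ln(1+|\ep(\bfv^N)|)(1+|\ep(\bfv^N)|)^{p(\cdot)-1}$, inequality~\eqref{eq:varrho} and Young's inequality, one absorbs a small multiple of $A(\bfv^N)$ and leaves a residue of the form $\int(1+|\ep(\bfv^N)|)^{p(\cdot)+\varrho}\dx$, which is controlled by Theorem~\ref{thm:2.1'}. An integration by parts turns $I_3$ into $-\int\nabla\bff:\nabla\bfv^N\dx$, estimable via Young's inequality using $\bff\in \lebe^{2}(0,T;\sobo^{1,2})$. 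The correction $Q$ is dominated by $L\int|\nabla\bfv^N|^2\dx$ thanks to the second bound in~\eqref{eq:phi}, and Burkholder--Davis--Gundy applied to $M$ produces, after taking the supremum in $t$, an expression of the form $\delta\,\E\sup_t\|\nabla\bfv^N(t)\|_2^{2}+c(\delta)\,\E\!\int_0^T\!\|\nabla\bfv^N\|_2^{2}\dt$, the first summand being absorbed into the left--hand side.

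The essential obstacle is the convective term $I_2$, and this is where the dimensional restrictions (i) and (ii) enter. After one integration by parts, $|I_2|\leq c\int_0^t\!\int_\mt|\nabla\bfv^N|^3\dxs$, and the task is to absorb this against $A(\bfv^N)$ modulo terms controlled by Theorem~\ref{thm:2.1'} or closable by Gronwall. Via the pointwise identity $|\nabla(1+|\ep|)^{p/2}|^2\leq c(1+|\ep|)^{p-2}|\nabla\ep|^2$, the coercive quantity $A(\bfv^N)$ (modulo a logarithmic defect handled by~\eqref{eq:varrho}) controls $\|\nabla(1+|\ep(\bfv^N)|)^{p^-/2}\|_2^{2}$, which by Sobolev embedding gives control of $\nabla\bfv^N$ in an appropriate higher Lebesgue space. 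In case~(i) this, combined with Ladyzhenskaya's inequality $\|\bfv^N\|_{\lebe^4}^2\leq c\|\bfv^N\|_{\lebe^2}\|\nabla\bfv^N\|_{\lebe^2}$, closes the interpolation provided $p^+<4$. In case~(ii) one interpolates $\|\nabla\bfv^N\|_3$ between $\|\nabla\bfv^N\|_{p^-}$ and the Sobolev gain from $A(\bfv^N)$; the inequalities $p^->\tfrac{11}{5}$ and $p^+\leq p^-+\tfrac{4}{5}$ are precisely the numerical conditions that allow Young's inequality to absorb the principal term into $A(\bfv^N)$ while leaving only a bounded power of quantities already controlled by Theorem~\ref{thm:2.1'}. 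Taking expectations, then the supremum in $t$, and finally applying Gronwall's lemma with a weight coming from $\bff$ yields~\eqref{eq:2Duniformest}. The entire difficulty is concentrated in closing $I_2$, and the sharp exponent ranges in (i)--(ii) are the price for that closure.
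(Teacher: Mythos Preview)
Your proposal has a genuine gap in the treatment of the residue from the $D_x\bfS_p$--term. After Young's inequality, what remains is a quantity of the form $\int_Q(1+|\ep(\bfv^N)|)^{p(\cdot)+\varrho}\dxt$ (equivalently $\int_Q|\bfF_p(\cdot,\ep(\bfv^N))|^{q}\dxt$ with $q>2$), and this is \emph{not} controlled by Theorem~\ref{thm:2.1'}: the energy estimate only bounds $\int_Q|\ep(\bfv^N)|^{p(\cdot)}\dxt$, i.e.\ the $\lebe^{2}$--norm of $\bfF_p$, not a strictly higher power. The paper closes this gap by a parabolic bootstrap (see \eqref{eq:reg}--\eqref{eq:2202b}): one first observes that the left--hand side of the inequality being proved also controls $\|\bfF_p\|_{\lebe^\infty_t\lebe^\tau_x}$ with $\tau=4/p^+$ (via $|\bfF_p|^\tau\leq c(1+|\ep(\bfv^N)|^2)$ and Korn) and $\|\bfF_p\|_{\lebe^2_t\lebe^{2\sigma}_x}$ (via Sobolev on $\nabla\bfF_p$). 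Interpolating these gives a bound on $\|\bfF_p\|_{\lebe^r_{t,x}}^{\chi}$ with $r>q$; a second interpolation together with the \emph{high moment} estimates of Theorem~\ref{thm:2.1'} (this is precisely why $C_r(\Lambda_0,\Lambda_\bff)<\infty$ for all $r$ is assumed) then shows $\E\|\bfF_p\|_{\lebe^q}^q\leq c(1+\E\|\bfF_p\|_{\lebe^r}^{\beta q\gamma})^{1/\gamma}$ with $\beta q<\chi$, which closes. This step is entirely missing from your argument.

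Relatedly, you misidentify the source of the restriction $p^+<4$ in the two--dimensional case. In $n=2$ the convective term $I_2$ vanishes identically (by the standard cancellation $\int_{\mathbb T^2}\Div(\bfv^N\otimes\bfv^N)\cdot\Delta\bfv^N\dx=0$), so Ladyzhenskaya's inequality plays no role there. The condition $p^+<4$ is instead needed so that $\tau=4/p^+>1$, which is what allows the interpolation \eqref{intera} in the bootstrap to work. In $n=3$ the convective term does produce $\int|\nabla\bfv^N|^3$, but the paper treats this jointly with the $D_x\bfS_p$--residue: both are absorbed into $\int_Q|\ep(\bfv^N)|^{\overline q}\dxt$ with $\overline q=\max\{3,p^++\varrho\}$, and the same bootstrap machinery (now run for $\ep(\bfv^N)$ rather than $\bfF_p$) is applied. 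The numerical conditions $p^->\tfrac{11}{5}$ and $p^+\leq p^-+\tfrac{4}{5}$ are exactly what makes $\beta\overline q<\chi$ in the three--dimensional version of \eqref{eq:22023}, not a direct Young--absorption into $A(\bfv^N)$ as you suggest.
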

\begin{corollary}\label{cor:2.1''}
Under the assumptions of Theorem \ref{thm:2.1''} we have
\begin{align}\label{eq:2Duniformestcor}
\begin{split}
\E&\bigg[\int_{\Q} |\nabla^2\bfv^N|^{\min(p^-,2)}\dxt\bigg]\\&\leq c\,\Big(
\int_{\lebe^2_{\Div}(\mt)}\big\|\bfu\big\|_{\lebe^2(\mt)}^{2r}\,\dd\Lambda_0(\bfu),
\int_{\lebe^2(\Q)}\big\|\bfg\big\|_{\lebe^2(0,T;\sobo^{1,2}(\mt))}^{2r}\,\dd\Lambda_\bff(\bfg)\Big)
\end{split}
\end{align}
uniformly in $N\in\mathbb{N}$. 
\end{corollary}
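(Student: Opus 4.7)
My strategy is to split the proof on whether $p^{-}\ge 2$ or $1<p^{-}<2$, and in both cases to reduce the desired bound to the natural nonlinear quantity
\[
\mathcal{J}(\bfv^{N}):=\int_{Q}(1+|\ep(\bfv^{N})|)^{p-2}|\nabla\ep(\bfv^{N})|^{2}\dxt.
\]
I first observe that $\E[\mathcal{J}(\bfv^{N})]\le c(\Lambda_{0},\Lambda_{\bff})$ uniformly in $N$. Indeed, the chain rule applied to $\bfF_{p}(\cdot,\ep(\bfv^{N}))$ produces
\[
\nabla\bfF_{p}(\cdot,\ep(\bfv^{N}))=(\partial_{\bfeta}\bfF_{p})(\cdot,\ep(\bfv^{N}))\,\nabla\ep(\bfv^{N})+\tfrac{1}{2}\bfF_{p}(\cdot,\ep(\bfv^{N}))\,\log(1+|\ep(\bfv^{N})|)\,\nabla p,
\]
and together with the triangle inequality, the bound $\|\nabla p\|_{\infty}\le c_{p}$ and \eqref{eq:varrho} this yields the pointwise estimate
\[
(1+|\ep(\bfv^{N})|)^{p-2}|\nabla\ep(\bfv^{N})|^{2}\le c\bigl(|\nabla\bfF_{p}(\cdot,\ep(\bfv^{N}))|^{2}+(1+|\ep(\bfv^{N})|)^{p+2\varrho}\bigr).
\]
The first term is controlled by Theorem~\ref{thm:2.1''}, and the remainder $(1+|\ep|)^{p+2\varrho}$ is handled using the higher-moment energy estimate of Theorem~\ref{thm:2.1'} together with parabolic Sobolev/Ladyzhenskaya interpolation, which is where the dimensional restrictions $p^{+}<4$ when $n=2$ and $p^{+}\le p^{-}+\tfrac{4}{5}$ when $n=3$ come in.

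Once $\E[\mathcal{J}(\bfv^{N})]\le c$ is known, the case $p^{-}\ge 2$ (so $\min(p^{-},2)=2$) is immediate: the pointwise inequality $(1+|\ep|)^{p-2}\ge 1$ gives $|\nabla\ep(\bfv^{N})|^{2}$ dominated by the integrand of $\mathcal{J}$, and a componentwise application of Korn's inequality to the partial derivatives $\partial_{k}\bfv^{N}$ delivers $\|\nabla^{2}\bfv^{N}\|_{2}\le c\|\nabla\ep(\bfv^{N})\|_{2}$. Taking expectations yields \eqref{eq:2Duniformestcor}.

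In the remaining case $1<p^{-}<2$, which is relevant in the two-dimensional regime of Theorem~\ref{thm:main2}(i), I proceed via a single H\"older decomposition. From the elementary identity
\[
|\nabla\ep(\bfv^{N})|^{p^{-}}=\bigl[(1+|\ep(\bfv^{N})|)^{p-2}|\nabla\ep(\bfv^{N})|^{2}\bigr]^{p^{-}/2}(1+|\ep(\bfv^{N})|)^{(2-p)p^{-}/2}
\]
and H\"older's inequality with conjugate exponents $2/p^{-}$ and $2/(2-p^{-})$ applied in the space variable one derives
\[
\int_{\mt}|\nabla\ep(\bfv^{N})|^{p^{-}}\dx\le\bigl(\mathcal{J}_{t}(\bfv^{N})\bigr)^{p^{-}/2}\left(\int_{\mt}(1+|\ep(\bfv^{N})|)^{(2-p)p^{-}/(2-p^{-})}\dx\right)^{(2-p^{-})/2},
\]
where $\mathcal{J}_{t}(\bfv^{N})$ denotes the $x$-integral appearing inside $\mathcal{J}$. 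The exponent $(2-p)p^{-}/(2-p^{-})$ is maximised over $p\in[p^{-},p^{+}]$ at $p=p^{-}$, where it equals $p^{-}$, so the second factor is pointwise dominated by a multiple of $1+|\ep(\bfv^{N})|^{p(\cdot)}$ and hence integrable by Theorem~\ref{thm:2.1'}. Two successive applications of H\"older with the same pair of exponents, first in time and then in $\omega$, followed by Korn's inequality $\|\nabla^{2}\bfv^{N}\|_{p^{-}}\le c\|\nabla\ep(\bfv^{N})\|_{p^{-}}$ and the bound on $\E[\mathcal{J}]$, combine to give \eqref{eq:2Duniformestcor}.

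The main technical obstacle is the $\log(1+|\ep|)\nabla p$ error arising from the spatial dependence of the exponent, whose absorption demands higher-moment integrability of $|\ep(\bfv^{N})|^{p+2\varrho}$ and is precisely what forces the dimensional restrictions of Theorem~\ref{thm:main2}. Once that remainder is controlled, the rest of the proof is an entirely routine H\"older split.
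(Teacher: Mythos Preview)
Your argument is correct, and your overall split on $p^{-}\gtrless 2$ matches the paper. However, your preliminary step is a detour: in the paper's notation, $|\nabla_{\bfxi}\bfF_{p}(\cdot,\ep(\bfv^{N}))|^{2}$ already \emph{is} (up to constants) the integrand $(1+|\ep(\bfv^{N})|)^{p-2}|\nabla\ep(\bfv^{N})|^{2}$ of your $\mathcal{J}$, so $\E[\mathcal{J}(\bfv^{N})]\le c$ is literally the content of Theorem~\ref{thm:2.1''} and no chain-rule extraction of the $\log(1+|\ep|)\nabla p$ error is needed. For the case $p^{-}<2$ the paper takes a slightly more economical route than your three-fold H\"older split: it first passes from $|\nabla\ep|^{p^{-}}$ to $1+|\nabla\ep|^{p(\cdot)}$, then writes
\[
|\nabla\ep(\bfv^{N})|^{p}=(1+|\ep(\bfv^{N})|)^{p\frac{2-p}{2}}\cdot(1+|\ep(\bfv^{N})|)^{p\frac{p-2}{2}}|\nabla\ep(\bfv^{N})|^{p}
\]
and applies \emph{pointwise} Young's inequality with the variable exponents $2/p(\cdot)$ and $2/(2-p(\cdot))$, directly obtaining $(1+|\ep|)^{p}+(1+|\ep|)^{p-2}|\nabla\ep|^{2}$, each summand being integrable over $\Omega\times Q$ by Theorems~\ref{thm:2.1'} and~\ref{thm:2.1''}. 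This avoids the separate H\"older steps in $x$, $t$ and $\omega$, but your approach gives the same conclusion and is equally valid.
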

\begin{proof}[Proof of Corollary~\ref{cor:2.1''}]
If $p^-\geq 2$ the claim follows immediately from Theorem~\ref{thm:2.1''}, the definition of $\bfF_p$ and Korn's inequality. So, let us assume that $p^-<2$.
By Korn's and Young's inequality we obtain
\begin{align*}
\E\bigg[&\int_{\Q} |\nabla^2\bfv^N|^{p^-}\dxt\bigg]\leq\,c\,\E\bigg[\int_{\Q} |\nabla\ep(\bfv^N)|^{p^-}\dxt\bigg]\leq\,c\,\E\bigg[1+\int_{\Q} |\nabla\ep(\bfv^N)|^{p}\dxt\bigg] \\
&=\,c\,\E\bigg[1+\int_{\Q}(1+|\ep(\bfv^N)|)^{p\frac{2-p}{2}}(1+|\ep(\bfv^N)|)^{p\frac{p-2}{2}} |\nabla\ep(\bfv^N)|^{p}\dxt\bigg]\\
&\leq\,c\,\E\bigg[\int_{\Q}(1+|\ep(\bfv^N)|)^{p}\dxt+\int_{\Q}(1+|\ep(\bfv^N)|)^{p-2} |\nabla\ep(\bfv^N)|^{2}\dxt\bigg].
\end{align*}
Now, the first term is bounded by Theorem~\ref{thm:2.1'} and the second one by
Theorem~\ref{thm:2.1''}. Clearly, $c>0$ does not depend on $N$, and hence the statement of the corollary follows. 
\end{proof}
\begin{proof}[Proof of Theorem \ref{thm:2.1''}]
In a similar vein as for Theorem~\ref{thm:2.1'}, the core of the proof consists in a suitable It\^{o}--expansion. We  hereafter apply It\^{o}'s formula to the function $f_{\gamma}(\bfu):=\tfrac{1}{2}\|\partial_{\gamma}\bfu\|_{\lebe^2(\mt)^{n}}^2$ (with $\gamma\in\{1,2\}$ for $n=2$ and $\gamma\in\{1,2,3\}$ for $n=3$) and obtain
\begin{align}\nonumber
\frac{1}{2}\|\partial_{\gamma}\bfv^N(t)\|_{\lebe^2(\mt)}^2&=\frac{1}{2}\|\partial_{\gamma}\mathcal P^N\bfv_0\|_{\lebe^2(\mt)}^2+\int_0^t f'(\bfv^N)\,\dd\bfv^N_\sigma+\frac{1}{2}\int_0^t f''(\bfv^N)\,\dd\langle\bfv^N\rangle_\sigma\\
\label{eq:2Duniformest1}&=\frac{1}{2}\|\partial_{\gamma}\mathcal P^N\bfv_0\|_{\lebe^2(\mt)}^2+\int_{\mt}\int_0^t \partial_{\gamma}\bfv^N\cdot\dd\partial_{\gamma}\bfv^N_\sigma\dx\\&+\frac{1}{2}\int_{\mt}\int_0^t\dd\Big\langle\!\Big\langle\int_0^{\cdot}\partial_{\gamma} \big(\Phi(\bfv^N)\,\dd\bfW\big)\Big\rangle\!\Big\rangle_\sigma\dx
=:(I)+(II)+(III).
\nonumber
\end{align}
We consider the three integrals separately. \\
\noindent
\emph{1.} We begin with $(I)$. By continuity of the projection, we record the estimate
\begin{align*}
\|\partial_{\gamma}\mathcal P^N\bfv_0\|_{\lebe^2(\mt)}^2 & \leq \|\mathcal P^N\bfv_0\|_{\sobo^{1,2}(\mt)}^2 \leq C \| \bfv_0\|_{\sobo^{1,2}(\mt)}^2. 
\end{align*}
\emph{2.} Deferring the estimation of $(III)$ to the end of the proof, we turn to $(II)$. Summing over $\gamma$, we find  
\begin{align*}
(II)&=-(II)_1-(II)_2+(II)_3+(II)_{4}+(II)_{5},\\
(II)_1&:=\int_0^t\int_{\mt} D_\bfxi\bfS(\cdot,\ep(\bfv^N))(\partial_{\gamma}\ep(\bfv^N),\partial_{\gamma}\ep(\bfv^N))\dxs,\\
(II)_2&:=\int_0^t\int_{\mt} \partial_\gamma\bfS(\cdot,\ep(\bfv^N)):\partial_{\gamma}\ep(\bfv^N)\dxs,\\
(II)_3&:=\int_0^t\int_{\mt}\partial_{\gamma}\bfv^N
\cdot\partial_{\gamma}\Big(\Phi(\bfv^N)\,\dd W_\sigma\Big)\dx,\\
(II)_{4}&:=\int_{0}^{t}\int_{\mt}\partial_{\gamma}\vv^{N}\cdot\partial_{\gamma}\ff\dif x\dif\sigma,\\
(II)_{5}&:=\int_{\mt}\Div(\bfv^N\otimes\bfv^N)\cdot\partial_{\gamma}^{2}\bfv^N\dx.
\end{align*}
\emph{Ad $(II)_{1}$.} Using the assumptions for $\bfS$ in \eqref{0.3} we obtain
\begin{align}\label{eq:II1estimate}
\begin{split}
(II)_1
&\geq \widetilde{c}\int_0^t\int_{\mt}(1+|\ep(\bfv^N)|^{2})^{\frac{p(\cdot)-2}{2}}
|\partial_{\gamma}\ep(\bfv^N)|^2\dxs.
\end{split}
\end{align}
\emph{Ad $(II)_{2}$.} We now turn to the second term $(II)_{2}$. By uniform Lipschitz continuity of $p(\omega,\cdot)$ we obtain
\begin{align}\label{eq:coercivepotential}
|\partial_{\gamma}\mathbf{S}(\cdot,\ep(\bfv^{N}))| \leq c\ln(1+|\ep(\bfv^{N})|)(1+|\ep(\vv^{N})|)^{p(\cdot)-2}|\ep(\bfv^{N})|
\end{align}
with an absolute constant $c>0$ for all $N\in\mathbb{N}$. We find by virtue of Young's Inequality for arbitrary $\delta>0$
\begin{align*}
(II)_2&\leq \,c\,\bigg(1+ \int_0^t\int_{\mt}\ln(1+|\ep(\bfv^N)|)(1+|\ep(\bfv^N)|^{p(\cdot)-1})|\partial_{\gamma}\ep(\bfv^N)|\dxs\bigg)\\
&\leq \,c(\delta)\,\bigg(1+ \int_0^t\int_{\mt}\ln^{2}(1+|\ep(\bfv^N)|)(1+|\ep(\bfv^N)|^{p(\cdot)})\dxs\bigg)\\&+ \delta\,\bigg(\int_0^t\int_{\mt}(1+|\ep(\bfv^N)|^{p(\cdot)-2})|\partial_{\gamma}\ep(\bfv^N)|^2\dxs\bigg) = c(\delta)\mathbf{I}'+\delta\mathbf{II}'.
\end{align*}
Choosing $\delta>0$ sufficiently small, $\delta\mathbf{II}'$ may be absorbed into the left side of the overall inequality by the coercive estimation of $(II)_{1}$ (cp.~\eqref{eq:II1estimate}), and therefore it remains to give a suitable upper bound for $c(\delta)\mathbf{I}'$. It is easy to see that for every $2<\mu<3$ there exists a constant $C=C(\mu)>0$ such that for all $t>0$ there holds $t^{2}\log^{2}(1+t)\leq C(1+t^{\mu})$. Using the Gagliardo--Nirenberg interpolation inequality on the torus \cite[Thm 7.28]{GT}, we obtain for $1\leq q,r\leq \infty$, $0\leq\alpha\leq 1$ the implication 
\begin{align}\label{eq:GNS}
\frac{1}{p}=\Big(\frac{1}{r}-\frac{1}{n}\Big)\alpha+\frac{1-\alpha}{q} \Longrightarrow \|u\|_{\lebe^{p}}\leq C\|u\|_{\sobo^{1,r}}^{\alpha}\|u\|_{\lebe^{q}}^{1-\alpha}\;\;\text{for}\;u\in(\sobo^{1,r}\cap\lebe^{q})(\tn), 
\end{align}
where $C>0$ only depends on $q,r$ and $n$. 
%
Now set $p=\mu$, $q=2$ and $r=2$, so that the condition in \eqref{eq:GNS} is satisfied with $\alpha=\frac{\mu-2}{\mu}$. Then we have $1-\alpha=\frac{2}{\mu}$ and so by Young's inequality with $\delta>0$ to be fixed later
\begin{align}
\begin{split}
\|v\|_{\lebe^{\mu}(\tn)}^{\mu}
&  \leq C\|v\|_{\sobo^{1,2}}^{\mu-2}\|v\|_{\lebe^{2}}^{2} \\ & \leq C\Big(\delta\|v\|_{\sobo^{1,2}}^{2}+C_{\delta}\|v\|_{\lebe^{2}}^{\frac{4}{4-\mu}}\Big)\\
& = C\Big(\delta\big(\|v\|_{\lebe^{2}}^{2}+\|\nabla v\|_{\lebe^{2}}^{2}\big)\Big)+C_{\delta}\|v\|_{\lebe^{2}}^{\frac{4}{4-\mu}}\Big)\qquad\text{for every}\;v\in\sobo^{1,2}(\tn). 
\end{split}
\end{align}
This estimation is implicit in \cite[cp. Eq. (4.62)]{Di}. We apply the previous estimate to $v:=(1+|\ep(\vv^{N})|^{2})^{p(\cdot)/4}$ to find
\begin{align}
\begin{split}
\mathbf{I}' & \leq C\int_{0}^{t}\int_{\mt} (1+|\ep(\vv^{N})|^{2})^{\mu p(\cdot)/4}\dxs \\
& \leq C + C\delta\int_{0}^{t}\Big(\|(1+|\ep(\vv^{N})|^{2})^{p(\cdot)/4}\|_{\lebe^{2}}^{2}+\|\nabla (1+|\ep(\vv^{N})|^{2})^{p(\cdot)/4}\|_{\lebe^{2}}^{2}\Big)\dif\sigma\\ & +C_{\delta}\int_{0}^{t}\|(1+|\ep(\vv^{N})|^{2})^{p(\cdot)/4}\|_{\lebe^{2}}^{\frac{4}{4-\mu}}\Big)\dif\sigma\\
& \leq  C + C\delta\int_{Q_{T}}|\ep(\vv^{N})|^{p(\cdot)}\dxs +C\delta\int_{0}^{t}\|\nabla (1+|\ep(\vv^{N})|^{2})^{p(\cdot)/4}\|_{\lebe^{2}}^{2}\dif\sigma\\ & +C_{\delta}\int_{0}^{t}\|(1+|\ep(\vv^{N})|^{2})^{p(\cdot)/4}\|_{\lebe^{2}}^{\frac{4}{4-\mu}}\Big)\dif\sigma.
\end{split}
\end{align}
By \eqref{eq:coercivepotential}, we obtain
\begin{align*}
\int_{0}^{t}\|\nabla (1+|\ep(\vv^{N})|^{2})^{p(\cdot)/4}\|_{\lebe^{2}}^{2}\dif\sigma & \leq C \int_{0}^{t}\int_{\tn}(1+|\ep(\vv^{N})|^{2})^{\frac{p(\cdot)}{2}}\ln^{2}\big(1+|\ep(\vv^{N})|^{2}\big)\dxs\\
& + C\int_0^t\int_{\mt}(1+|\ep(\bfv^N)|^{2})^{\frac{p(\cdot)-2}{2}}
|\partial_{\gamma}\ep(\bfv^N)|^2\dxs.
\end{align*}
So that, choosing $\delta>0$ small enough and absorbing the first term of the right side of the previous inequality into $\mathbf{I}'$, we end up with (recall $2<\mu<3$ so that $4/(4-\mu)\leq 4$), 
\begin{align*}
\mathbf{I}' & \leq  C  + C\delta\int_{Q_{T}}|\ep(\vv^{N})|^{p(\cdot)}\dxs +C\delta\int_0^t\int_{\mt}(1+|\ep(\bfv^N)|^{2})^{\frac{p(\cdot)-2}{2}}
|\partial_{\gamma}\ep(\bfv^N)|^2\dxs\\ & +C_{\delta}\int_{0}^{t}\|(1+|\ep(\vv^{N})|^{2})^{p(\cdot)/4}\|_{\lebe^{2}}^{4}\dif\sigma\\
&\leq C+ C\,\int_{Q_{T}}|\ep(\vv^{N})|^{p(\cdot)}\dxs +C\delta\int_0^t\int_{\mt}(1+|\ep(\bfv^N)|^{2})^{\frac{p(\cdot)-2}{2}}
|\partial_{\gamma}\ep(\bfv^N)|^2\dxs\\ & +C_{\delta}\int_{0}^{t}\int_{\mt}|\ep(\vv^{N})|^{p(\cdot)\frac{2}{4-\mu}}\dx\dif\sigma = \mathbf{I}'_{1}+...+\mathbf{I}'_{4}. 
\end{align*}
The terms $\mathbf{I}'_{1}$ and $\mathbf{I}'_{2}$ are already in a convenient form. For $\delta$ small enough consequently may absorb $\mathbf{I}'_{3}$ into the right side of \eqref{eq:II1estimate}. 

\emph{Ad $(II)_{3}$.} We decompose
\begin{align*}
(II)_3&=\int_{\mt}\int_0^t\partial_\gamma\bfv^N\cdot\partial_\gamma\Big(\Phi(\bfv^N)e_k\,\dd\beta_k\Big)\dx\\
&=\sum_k\int_{\mt}\int_0^t\partial_\gamma\bfv^N\cdot\partial_\gamma\Big(g_k(\bfv^N)\,\dd\beta_k\Big)\dx\\
&=\sum_k\int_{\mt}\int_0^t \nabla_{\bfxi} g_k(\bfv^N)
(\partial_\gamma\bfv^N,\partial_\gamma\bfv^N)\,\dd\beta_k\dx\\
&+\sum_k\int_{\mt}\int_0^t\partial_\gamma\bfv^N\cdot \partial_\gamma g_k(\bfv^N)\,\dd\beta_k\dx\\
&=\int_{\mt}\int_0^t\mathcal G^\bfxi
(\partial_\gamma\bfv^N,\partial_\gamma\bfv^N)\,\dd\beta_k\dx.
\end{align*}
On account of assumption (\ref{eq:phi}), Burkholder-Davis-Gundy inequality and Young's inequality we obtain for arbitrary $\delta>0$
\begin{align*}
\E\bigg[\sup_{0<t<T}|(II)_3^1|\bigg]&\leq \E\bigg[\sup_{0<t<T}\Big|\int_0^t\sum_k\int_{\mt}\nabla g_k(\bfv_N)
(\partial_\gamma\bfv^N,\partial_\gamma\bfv^N)\dx\,\dd\beta_k\Big|\bigg]\\
&\leq c\,\E\bigg[\int_0^T\bigg(\int_{\mt}  \nabla g_k(\bfv^N)
(\partial_\gamma\bfv^N,\partial_\gamma\bfv^N)\dx\bigg)^2\dt\bigg]^{\frac 12}\\
&\leq c\,\E\bigg[\bigg(\int_0^T\bigg(\int_{\mt}
|\partial_\gamma\bfv^N|^2\dx\bigg)^2\dt\bigg]^{\frac 12}\\
&\leq \delta\,\E\bigg[\sup_{0<t<T}\int_{\mt}
|\partial_\gamma\bfv^N|^2\dx\bigg]+ c(\delta)\,\E\bigg[\int_\Q
|\partial_\gamma\bfv^N|^2\dxt\bigg].
\end{align*}
\emph{Ad $(II)_{4}$.} After we shall have passed to the supremum in the overall inequality, by Young's inequality we obtain for a finite constant $C_{\delta}>0$
\begin{align*}
(II)_{4}\leq C\delta \sup_{0<t<T}\int_{\tn}|\partial_{\gamma}\vv^{N}|^{2}\dif x + C_{\delta}\int_{0}^{t}\int_{\tn}|\nabla \ff|^{2}\dxs.
\end{align*}
We then may choose $\delta>0$ so small such that $\delta\|\partial_{\gamma}\vv^{N}\|_{\lebe^{2}(Q)}^{2}$ can be absorbed into \eqref{eq:2Duniformest1}.
\noindent
\emph{Ad $(III)$.} We have by (\ref{eq:phi})
\begin{align*}
(III)&=\frac{1}{2}\int_{\mt}\int_0^t \dd\Big\langle\Big\langle\int_0^{\cdot}\partial_{\gamma} \big(\Phi(\bfv^N)\,\dd W^{N}\big)\Big\rangle\Big\rangle_\sigma\dx\\
&\leq \frac{1}{2}\sum_{k}\int_{\mt}\int_0^t \,\dd\Big\langle\Big\langle\int_0^{\cdot}\partial_{\gamma}\big(\Phi(\bfv^N)\bfe_k\big) \,\dd\beta_k\Big\rangle\Big\rangle_\sigma\dx\\
&\leq \frac{1}{2}\sum_{k}\int_0^t\int_{\mt} \Big|\nabla_{\bfxi} g_k(\cdot,\bfv^N)\cdot\partial_{\gamma}\bfv^N\Big|^2\dxs\\
&\leq c\,\int_0^t\int_{\mt}|\partial_{\gamma}\bfv^N|^2\dxs+c\,\int_0^t\int_{\mt}|\bfv^N|^2\dxs.
\end{align*}

\subsection{The case $n=2$.}
\emph{Ad $(II)_{5}$.} The crucial impact of our assumption $n=2$ is that $(II)_{5}=0$ which can be established by elementary calculations.
Gathering estimates, we have shown
\begin{align*}
\E\bigg[\sup_{0<t<T}&\int_{\mt}|\nabla\bfv^N(t)|^2\dx+\int_\Q|\nabla_\bfxi\bfF_p(\cdot,\ep(\bfv^N))|^2\dxt\bigg]\\\leq\,&c\,\E\bigg(1+\int_{\mt}\big(|\bfv_0|^2+|\nabla\bfv_0|^2\big)\dx+\E\int_Q\big(|\bff|^2+|\nabla\bff|^2\big)\dxt\bigg)\\
&+ c\,\E\bigg(\int_Q|\ep(\bfv^N)|^{p(\cdot)}\dxt +\int_Q|\bfv^N|^{2}\dxt +\int_Q|\nabla\bfv^N|^{2}\dxt \bigg)\\
&+ c\,\E\int_Q\Big(|\ep(\bfv^N)|^{p(\cdot)}\Big)^{\frac{q}{2}}\dxt, 
\end{align*}
where $q:=\frac{4}{4-\mu}$. \\
The terms in the first line of the right hand side are bounded by assumption. The terms in the second line are bounded by the a priori estimates from Theorem \ref{thm:2.1'} except of the last one. It can, however, be handled by Gronwall's lemma leading to
\begin{align*}
\E\bigg[\sup_{0<t<T}&\int_{\mt}|\nabla\bfv^N|^2\dx+\bigg[\int_\Q|\nabla_{\bfxi}\bfF_p(\cdot,\ep(\bfv^N))|^2\dxt\bigg]\\
&\leq\,c\,\E\bigg(1+\int_Q|\bfF_p(\cdot,\ep(\bfv^N))|^{q}\dxt\bigg).
\end{align*}
By Lipschitz continuity of $p$ we obtain
\begin{align*}
|\nabla\bfF_p(\cdot,\ep(\bfv^N))|&\leq\, |\nabla_{\bfxi}\bfF_p(\cdot,\ep(\bfv^N))|+|\partial_\gamma\bfF_p(\cdot,\ep(\bfv^N))|\\
&\leq\, |\nabla_{\bfxi}\bfF_p(\cdot,\ep(\bfv^N))|+c\,\ln(1+| \ep(\bfv^N)|)(1+|\ep(\bfv^N)|)^{\frac{p(\cdot)}{2}}\\
&\leq\, |\nabla_{\bfxi}\bfF_p(\cdot,\ep(\bfv^N))|+c\,\big(|\bfF_p(\cdot,\ep(\bfv^N))|^\frac{q}{2}+1\big)
\end{align*}
such that
\begin{align}\label{eq:reg}
\begin{aligned}
\E\bigg[\sup_{0<t<T}&\int_{\mt}|\nabla\bfv^N|^2\dx+\bigg[\int_\Q|\nabla\bfF_p(\cdot,\ep(\bfv^N))|^2\dxt\bigg]\\
&\leq\,c\,\E\bigg(1+\int_Q|\bfF_p(\cdot,\ep(\bfv^N))|^{q}\dxt\bigg).
\end{aligned}
\end{align}
Note that $q$ can be chosen arbitrarily close to 2.
The objective of the following is to find a suitable bound for the remaining integral on the right hand side.

By Korn's inequality, $\int_{\mt}|\nabla\bfv^N|^2$ and $\int_{\mt}|\ep(\bfv^N)|^2$ are equivalent. Using the elementary inequality $|\bfF_{p}(\cdot,\bfxi)|^\tau\leq\,c(|\bfxi|^2+1)$ for $\tau=4/p^+$ and Sobolev's embedding
$W^{1,2}(\mt)\hookrightarrow L^{2\sigma}(\mt))$ (with $\sigma=\frac{n}{n-2}$
if $n\geq3$ and $\sigma$ arbitrary for $n=2$) we deduce from \eqref{eq:reg} that
\begin{align}\label{eq:reg'}
\begin{aligned}
\E\bigg[\sup_{0<t<T}&\int_{\mt}|\bfF_p(\cdot,\ep(\bfv^N))|^\tau\dx+\bigg[\int_0^T\bigg(\int_{\mt}|\bfF_p(\cdot,\ep(\bfv^N))|^{2\sigma}\dx\bigg)^{\frac{1}{\sigma}}\dt\bigg]\\
&\leq\,c\,\E\bigg(1+\int_Q|\bfF_p(\cdot,\ep(\bfv^N))|^{q}\dxt\bigg).
\end{aligned}
\end{align}
In order to proceed, we use the interpolation (recall that $\tau>1$ as $p^+<4$)
\begin{align*}
\Big(L^\infty(0,T&;L^\tau(\mt));L^2(0,T;L^{2\sigma}(\mt))\Big)_{\Theta}=L^r(0,T;L^r(\mt)),\\ r&=2+\tau-\frac{\tau}{\sigma},\quad \Theta=1-\frac{2}{r},
\end{align*}
and obtain for $\chi=\frac{2\tau}{2\Theta+\tau(1-\Theta)}$
\begin{align}\label{intera}
\|v\|_r^\chi\leq \|v\|_{L^\infty_tL^\tau_x}^{\chi\Theta}\|v\|_{L^2_tL^{2\sigma}_x}^{\chi(1-\Theta)}\leq\|v\|_{L^\infty_tL^\tau_x}^{\tau}+\|v\|_{L^2_tL^{2\sigma}_x}^{2}.
\end{align}
Combining \eqref{eq:reg'} and \eqref{intera} yields
\begin{align*}
\E\|\bfF_p(\cdot,\ep(\bfv^N))\|_{L^r_{t,x}}^\chi\leq\,c\,\Big(1+\E\|\bfF_p(\cdot,\ep(\bfv^N))\|^q_{L^q_{t,x}}\Big).
\end{align*}
We continue with the interpolation
\begin{align*}
\Big(L^r(Q);L^2(Q)\Big)_{\beta}=L^q(Q),\quad \beta=\frac{r}{q}\frac{q-2}{r-2},
\end{align*}
and obtain
\begin{align*}
\E\|\bfF_p(\cdot,\ep(\bfv^N))\|^q_{L^q_{t,x}}&\leq \E\Big(\|\bfF_p(\cdot,\ep(\bfv^N))\|_{L^r_{t,x}}^{\beta q}\|\bfF_p(\cdot,\ep(\bfv^N))\|_{L^2_{t,x}}^{(1-\beta) q}\Big)\\
&\leq \Big(\E\|\bfF_p(\cdot,\ep(\bfv^N))\|_{L^r_{t,x}}^{\beta q\gamma}\Big)^{\frac{1}{\gamma}}\Big(\E\|\bfF_p(\cdot,\ep(\bfv^N))\|_{L^2_{t,x}}^{(1-\beta) q\gamma'}\Big)^{\frac{1}{\gamma'}}
\end{align*}
using also H\"older's inequality for $\gamma\in(1,\infty)$ arbitrary. By Theorem
\ref{thm:2.1'}, the definition of $\bfF_p$ and the assumptions on the initial law we  find that the second term is uniformly bounded for any choice of $\gamma$. So, we obtain
\begin{align}\label{eq:2202}
\begin{aligned}
\E\|\bfF_p(\cdot,\ep(\bfv^N))\|^\chi_{L^r_{t,x}}&\leq\,c\,(1+\E\|\bfF_p(\cdot,\ep(\bfv^N))\|^q_{L^q_{t,x}})\\&
\leq \,c\,\Big(1+\E\|\bfF_p(\cdot,\ep(\bfv^N))\|_{L^r_{t,x}}^{\beta q\gamma}\Big)^{\frac{1}{\gamma}}.
\end{aligned}
\end{align}
If $\beta q<\chi$ (note that $\beta$ can be made arbitrarily small if we choose $q$ close enough to 2 and $\gamma$ can be chosen arbitrarily close to 1), we finally obtain
\begin{align*}
\E\|\bfF_p(\cdot,\ep(\bfv^N))\|^\chi_{L^r_{t,x}}\leq\,c
\end{align*}
uniformly in $N$. By \eqref{eq:2202} this implies
\begin{align}\label{eq:2202b}
\E\|\bfF_p(\cdot,\ep(\bfv^N))\|^q_{L^q_{t,x}}\leq\,c
\end{align}
uniformly. Inserting this into \eqref{eq:reg} yields the claim.

\subsection{The case $n=3$.}
If $n=3$, the convective term does not vanish. We have to estimate it which is only possible under a restrictive assumption on $p^-$. We have 
\begin{align*}
(II)_5\leq\,\int_0^t\int_{\mt}|\nabla\bfv^N|^3\dxs
\end{align*}
such that we end up with 
\begin{align}\label{eq:reg3}
\begin{aligned}
\E\bigg[\sup_{0<t<T}&\int_{\mt}|\nabla\bfv^N|^2\dx+\bigg[\int_\Q|\nabla\bfF_p(\cdot,\ep(\bfv^N))|^2\dxt\bigg]\\
&\leq\,c\,\E\bigg(1+\int_Q|\ep(\bfv^N)|^{\q}\dxt\bigg),
\end{aligned}
\end{align}
where $\q=\max\{p^++\varrho,3\}$ ($\varrho>0$ is arbitrary) as a counterpart to \eqref{eq:reg}. Using again Sobolev's embedding shows
\begin{align*}
\E\bigg[\sup_{0<t<T}&\int_{\mt}|\ep(\bfv^N)|^2\dx+\bigg[\int_0^T\bigg(\int_{\mt}|\bfF_p(\cdot,\ep(\bfv^N))|^{6}\dx\bigg)^{\frac{1}{3}}\dt\bigg]\\
&\leq\,c\,\E\bigg(1+\int_Q|\ep(\bfv^N)|^{\q}\dxt\bigg).
\end{align*}
We obtain finally
\begin{align}\label{eq:reg'3}
\begin{aligned}
\E\bigg[\sup_{0<t<T}&\int_{\mt}|\ep(\bfv^N)|^2\dx+\bigg[\int_0^T\bigg(\int_{\mt}|\ep(\bfv^N)|^{3p^-}\dx\bigg)^{\frac{1}{3}}\dt\bigg]\\
&\leq\,c\,\E\bigg(1+\int_Q|\ep(\bfv^N)|^{\q}\dxt\bigg).
\end{aligned}
\end{align}
Now we use an interpolation which is quite similar to the two-dimensional case. However, the quantity of interest is now $\ep(\bfv^N)$ instead of $\mathbf{F}_p(\cdot,\ep(\bfv^N))$. Using the interpolation
\begin{align*}
\Big(L^\infty(0,T&;L^2(\mt));L^{p^-}(0,T;L^{3p^-}(\mt))\Big)_{\Theta}=L^r(0,T;L^r(\mt)),\\ r&=\frac{4}{3}+p^-,\quad \Theta=1-\frac{p^-}{r},
\end{align*}
we obtain for $\chi=\frac{3}{5}r$
\begin{align}\label{inter}
\|v\|_r^\chi\leq \|v\|_{L^\infty_tL^2_x}^{\chi\Theta}\|v\|_{L^{p^-}_tL^{3p^-}_x}^{\chi(1-\Theta)}\leq\|v\|_{L^\infty_tL^2_x}^{2}+\|v\|_{L^{p^-}_tL^{3p^-}_x}^{p^-}
\end{align}
such that
\begin{align*}
\E\|\ep(\bfv^N)\|_{L^r_{t,x}}^\chi\leq\,c\,\Big(1+\E\|\ep(\bfv^N)\|^{\q}_{L^{\q}_{t,x}}\Big).
\end{align*}
On account of the interpolation
\begin{align*}
\Big(L^r(Q);L^{p^-}(Q)\Big)_{\beta}=L^{\q}(Q),\quad \beta=\frac{r}{\q}\frac{\q-p^-}{r-p^-},
\end{align*}
we gain similarly to the two-dimensional case
\begin{align*}
\E\|\ep(\bfv^N)\|^{\q}_{L^{\q}_{t,x}}&\leq \E\Big(\|\ep(\bfv^N)\|_{L^r_{t,x}}^{\beta \q}\|\ep(\bfv^N)\|_{L^{p^-}_{t,x}}^{(1-\beta) \q}\Big)\\
&\leq \Big(\E\|\ep(\bfv^N)\|_{L^r_{t,x}}^{\beta \q\gamma}\Big)^{\frac{1}{\gamma}}\Big(\E\|\ep(\bfv^N)\|_{L^{p^-}_{t,x}}^{(1-\beta) \q\gamma'}\Big)^{\frac{1}{\gamma'}}.
\end{align*}
By Theorem \ref{thm:2.1'}
the second term is uniformly bounded and hence
\begin{align}\label{eq:22023}
\E\|\ep(\bfv^N)\|^\chi_{L^r_{t,x}}\leq\,c\,\big(1+\E\|\ep(\bfv^N)\|^{\q}_{L^{\q}_{t,x}}\big)&
\leq \,c\,\Big(1+\E\|\ep(\bfv^N)\|_{L^r_{t,x}}^{\beta \q\gamma}\Big)^{\frac{1}{\gamma}}.
\end{align}
Now we have to check that $\beta \q< \chi$. This is equivalent to $\q<p^-+\frac{4}{5}$ which follows from our assumption $\frac{11}{5}<p^-\leq p^+\leq p^-+\frac{4}{5}$.
So, the proof can be finished as before if we chose $\gamma$ close enough to 1.
\end{proof}

\subsection{Compactness}
As in \eqref{eq:Hclaim4} we have again
\begin{align}\label{eq:Hclaim4b}
\sup_{N\in\mathbb{N}}\E\left[\lVert{\bfv^{N}\rVert}_{C^\mu([0,T];\sobo_{\Div}^{-\ell,p_{0}}(\tn)^{n})}\right]<\infty
\end{align}
for certain $\mu>0$, $\ell\in\mathbb N$ and $p_0>1$.
In view of compactness, let us now define the path space 
\begin{align}\label{eq:pathspace}
\begin{split}
\mathcal{X}&:=\mathcal X_{\bfv}\otimes \mathcal X_{\bfF}\otimes \mathcal X_{p}\otimes \mathcal X_{\bff}\otimes \mathcal X_W,
\end{split}
\end{align}
where\footnote{$(X,w)$ denotes a Banach space equipped with the weak topology.}
\begin{align*}
\mathcal X_{\bfv}&:= C([0,T];\sobo_{\Div}^{-\ell,p_{0}}(\tn))\cap L^{2}(0,T;W^{1,2}_{\Div}(\mt)),\\
\mathcal X_{\bfF}&:= \big(L^2(0,T;W^{1,2}(\mt)),w\big),\\
\mathcal X_{p}&:= \hold^0([0,T]\times \mt),\\
\mathcal X_{\bff}&:= L^2(0,T;W^{1,2}(\mt)),\\
\mathcal X_W&:=\hold([0,T];\mathfrak{U}_{0}).
\end{align*}
We obtain the following.
\begin{proposition}\label{prop:bfutightness''}
The set $\{\mathcal{L}[\bfv^N,\bfF_p(\cdot,\ep(\bfv^N)),p,\bff,W];\,N\in\mathbb N\}$ is tight on $\mathcal{X}$.
\end{proposition}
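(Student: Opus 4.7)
The plan is to establish tightness component by component. The marginals for $p$, $\bff$ and $W$ on $\mathcal{X}_p$, $\mathcal{X}_{\bff}$ and $\mathcal{X}_W$ are tight by the same arguments as in Proposition~\ref{prop:bfutightness'} (the laws of $p$ and $\bff$ are deterministic Radon measures on Polish spaces and thus tight by inner regularity, while $W$ is tight by classical Wiener arguments). So the new content is tightness of the laws of $\bfv^N$ on $\mathcal{X}_{\bfv}$ and of $\bfF_p(\cdot,\ep(\bfv^N))$ on $\mathcal{X}_\bfF$.

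For $\mathcal{X}_\bfF$: Theorem~\ref{thm:2.1''} provides the uniform bound
\begin{equation*}
\sup_{N}\E\bigl[\|\bfF_p(\cdot,\ep(\bfv^N))\|_{L^2(0,T;W^{1,2}(\mt))}^2\bigr]<\infty.
\end{equation*}
Since $L^2(0,T;W^{1,2}(\mt))$ is a separable Hilbert space, its closed balls are metrizable and compact in the weak topology. Tchebycheff's inequality applied to the above moment bound then yields weak relative compactness of the family of laws, that is, tightness on $\mathcal X_\bfF$.

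For $\mathcal{X}_\bfv$: I would mimic the scheme leading to Proposition~\ref{prop:bfutightness'}. The time--regularity statement \eqref{eq:Hclaim4b} still gives, for some $\mu>0$ and $\ell,p_0$,
\begin{equation*}
\sup_{N}\E\bigl[\|\bfv^N\|_{C^\mu([0,T];\sobo^{-\ell,p_0}_{\Div}(\tn))}\bigr]<\infty,
\end{equation*}
which, through Arzelà--Ascoli (the embedding $C^\mu\hookrightarrow\hookrightarrow C$ being compact), produces tightness in $C([0,T];\sobo^{-\ell,p_0}_{\Div}(\tn))$. For the additional component $L^2(0,T;W^{1,2}_{\Div}(\mt))$, I would invoke Corollary~\ref{cor:2.1''}, which delivers
\begin{equation*}
\sup_{N}\E\bigl[\|\bfv^N\|_{L^{\min(p^-,2)}(0,T;W^{2,\min(p^-,2)}(\mt))}^{\min(p^-,2)}\bigr]<\infty,
\end{equation*}
together with Theorem~\ref{thm:2.1''} providing $\sup_N\E[\sup_t\|\nabla\bfv^N(t)\|_2^2]<\infty$. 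Under the standing hypotheses $p^->1$ (in 2D) and $p^->11/5$ (in 3D), one has $\min(p^-,2)>2n/(n+2)$, hence the compact Sobolev embedding $W^{2,\min(p^-,2)}(\mt)\hookrightarrow\hookrightarrow W^{1,2}(\mt)$. Combined with the H\"older continuity in time into $W^{-\ell,p_0}_{\Div}$, a fractional Aubin--Lions theorem (as applied in \eqref{eq:cpctAmann}) yields the compact embedding
\begin{equation*}
C^\mu([0,T];\sobo^{-\ell,p_0}_{\Div}(\tn))\cap L^{\min(p^-,2)}(0,T;W^{2,\min(p^-,2)}(\mt))\hookrightarrow\hookrightarrow L^2(0,T;W^{1,2}_{\Div}(\mt)),
\end{equation*}
and Tchebycheff's inequality again concludes tightness in this stronger topology. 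Joint tightness of the full quintuple on $\mathcal X$ then follows from tightness of each marginal.

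The main technical point, and the reason for invoking the \emph{improved} moment estimates of Theorem~\ref{thm:2.1''} and Corollary~\ref{cor:2.1''} rather than just Theorem~\ref{thm:2.1'}, is the $L^2(0,T;W^{1,2})$--component in $\mathcal X_\bfv$: strong convergence of $\nabla\bfv^N$ in $L^2_{t,x}$ is what is needed in the next step to pass to the limit in the $p(\cdot)$--nonlinearity and in the convective term under the dimensional restrictions $(i)$, $(ii)$, and verifying that the second--order spatial regularity suffices for the compact embedding into $W^{1,2}$ is the step where the hypothesis $p^->11/5$ (in 3D) is genuinely used.
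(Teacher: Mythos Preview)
Your argument is essentially correct and takes a somewhat different route from the paper. The paper obtains tightness of $\bfv^N$ in $L^2(0,T;W^{1,2}_{\Div})$ by invoking Amann's interpolation-type compactness result, feeding in the $L^\infty(0,T;W^{1,2}_{\Div})$ bound from Theorem~\ref{thm:2.1''} together with the $C^\mu([0,T];W^{-\ell,p_0}_{\Div})$ bound \eqref{eq:Hclaim4b}. You instead exploit the second-order spatial regularity of Corollary~\ref{cor:2.1''} and run a standard Aubin--Lions argument via the compact Sobolev embedding $W^{2,\min(p^-,2)}\hookrightarrow\hookrightarrow W^{1,2}$. Your route is arguably more elementary, while the paper's avoids appealing to Corollary~\ref{cor:2.1''} altogether.

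There is one technical slip. The compact embedding you display,
\[
C^\mu([0,T];W^{-\ell,p_0}_{\Div})\cap L^{\min(p^-,2)}(0,T;W^{2,\min(p^-,2)})\hookrightarrow\hookrightarrow L^2(0,T;W^{1,2}_{\Div}),
\]
only yields $L^{\min(p^-,2)}$ integrability in time by Aubin--Lions. In case~(ii) this is harmless since $p^->2$, but in case~(i) with $p^-<2$ it falls short of the target $L^2$. The repair is immediate and uses exactly the ingredient you already list: intersect with the $L^\infty(0,T;W^{1,2})$ ball coming from Theorem~\ref{thm:2.1''} and interpolate $\|v\|_{L^2_tW^{1,2}_x}\leq \|v\|_{L^{p^-}_tW^{1,2}_x}^{\theta}\|v\|_{L^\infty_tW^{1,2}_x}^{1-\theta}$ to upgrade compactness from $L^{p^-}$ to $L^2$ in time. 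A minor point on your closing commentary: the compact embedding $W^{2,\min(p^-,2)}\hookrightarrow\hookrightarrow W^{1,2}$ in 3D only needs $p^->6/5$; the genuine use of $p^->11/5$ occurs earlier, in deriving the a~priori estimate of Theorem~\ref{thm:2.1''}.
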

\begin{proof}
We recall an interpolation result of Aubin--Lions--type due to Amann \cite{Am} to conclude that  
\begin{align}\label{eq:cpctAmann}
\begin{aligned}
\lebe^{\infty}(0,T;\sobo_{\Div}^{1,2}(\tn))&\cap C^\mu([0,T];W_{\Div}^{-\ell,p_0}(\mt))\cap \lebe^{p^-}(0,T;\sobo_{\Div}^{1,p^-}(\tn))\\ &\hookrightarrow\hookrightarrow \lebe^{2}(0,T;\sobo_{\Div}^{1,2}(\tn)).
\end{aligned}
\end{align}
On the other hand, Ascoli-Arzel\'{a}'s theorem yields
\begin{align*}
C^\mu([0,T];W^{-\ell,p_0}_{\Div}(\mt))\hookrightarrow \hookrightarrow C([0,T];W_{\Div}^{-\ell,p_0}(\mt)).
\end{align*}
So, we obtain tightness of $\bfv^N$ on $\mathcal X_\bfv$ from \eqref{eq:Hclaim4b},
Theorems \ref{thm:2.1'} and \ref{thm:2.1''} and Tschebyscheff's inequality.
Tightness of $\bfF_p(\cdot,\ep(\bfv^N))$ on $\mathcal X_\bfF$ follows immediately from
Theorem \ref{thm:2.1'} and \ref{thm:2.1''}.
Finally the laws of $p$, $\bff$ and $W$ are tight as before in Section \ref{subsec:comp}.
\end{proof}

Accordingly, we apply Jakubowski's extension of Skorokhod's
theorem (see \cite{jakubow}).
We infer the following result.

\begin{proposition}\label{prop:skorokhodb}
There exists a complete probability space $(\tilde\Omega,\tilde\mf,\tilde\prst)$ with $\mathcal{X}$-valued Borel measurable random variables $(\tilde\bfv^N,\tilde \bfF^N,\tilde p^N,\tilde \bff^N,\tilde W^N)$, $N\in\N$, and $(\tilde\bfv,\tilde \bfF,\tilde p,\tilde \bff,\tilde W)$ such that (up to a subsequence)
\begin{enumerate}
 \item the law of $(\tilde\bfv^N,\tilde \bfF^N,\tilde p^N,\tilde \bff^N,\tilde W^N)$ on $\mathcal{X}$ is given by $\mathcal{L}[\bfv^N,\bfF_p(\cdot,\ep(\bfv^N)), p,\bff,W]$, $N\in\N$,
\item the law of $(\tilde\bfv,\tilde \bfF,\tilde p,\bff,\tilde W)$ on $\mathcal{X}$ is a Radon measure,
 \item $(\tilde\bfv^N,\tilde \bfF^N,\tilde p^N,\tilde \bff^N,\tilde W^N)$ converges $\,\tilde{\prst}$-almost surely to $(\tilde\bfv,\tilde \bfF,\tilde p,\tilde \bff,\tilde W)$ in the topology of $\mathcal{X}$, i.e.
\begin{align} \label{wWS116b}
\begin{aligned}
\tilde \bfv^N &\to \tilde\bfv \ \mbox{in}\ \hold([0,T];W_{\Div}^{-\ell,p_0}(\mt)) \ \tilde\p\mbox{-a.s.}, \\
\tilde \bfv^N &\to \tilde\bfv \ \mbox{in}\ L^2(0,T;W^{1,2}_{\Div}(\mt)) \ \tilde\p\mbox{-a.s.}, \\
\tilde \bfF^N &\rightharpoonup \tilde \bfF \ \mbox{in}\ L^2(0,T;W^{1,2}(\mt)) \ \tilde\p\mbox{-a.s.}, \\
\tilde p^N &\to \tilde p \ \mbox{in}\ \hold^0([0,T]\times\mt) \ \tilde\p\mbox{-a.s.}, \\
\tilde \bff^N &\to \tilde \bff \ \mbox{in}\ L^2(0,T;W^{1,2}(\mt)) \ \tilde\p\mbox{-a.s.}, \\
\tilde W^N &\to \tilde W \ \mbox{in}\ \hold([0,T]; \mathfrak{U}_0 )\ \tilde\p\mbox{-a.s.}
\end{aligned}
\end{align}
\end{enumerate}
\end{proposition}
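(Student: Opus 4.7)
The plan is to combine the tightness result of Proposition \ref{prop:bfutightness''} with Jakubowski's extension of the Skorokhod representation theorem, which is the appropriate tool here since the path space $\mathcal{X}$ contains the non-metrizable component $\mathcal{X}_\bfF=(L^2(0,T;W^{1,2}(\mt)),w)$ and therefore the classical Skorokhod theorem does not directly apply.

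First, I would verify that $\mathcal{X}$ falls within the scope of Jakubowski's theorem, i.e.\ that it is a topological space on which there exists a countable family of real-valued continuous functions separating points (a so-called quasi-Polish space). The four factors $\mathcal{X}_\bfv$, $\mathcal{X}_p$, $\mathcal{X}_\bff$, $\mathcal{X}_W$ are separable Banach (or Fr\'echet) spaces, hence Polish, and so automatically admit such a separating family. For $\mathcal{X}_\bfF$ one uses that $L^2(0,T;W^{1,2}(\mt))$ is a separable reflexive Banach space, so that any countable norm-dense subset of its topological dual provides a countable family of weakly continuous functionals separating points of bounded sets; since the restriction to each ball $B_R$ of the weak topology is metrizable and $\sigma$-compact, the conditions of Jakubowski's theorem \cite{jakubow} are met on the product.

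Next, I would use Prokhorov's theorem together with Proposition \ref{prop:bfutightness''} to pass to a subsequence (not relabelled) for which the joint laws $\mathcal L[\bfv^N,\bfF_p(\cdot,\ep(\bfv^N)),p,\bff,W]$ converge weakly on $\mathcal X$ to some Borel probability measure. For the $\mathcal{X}_\bfF$--marginal, tightness is understood relative to the weak topology and follows directly from the uniform-in-$N$ bound on $\E\|\bfF_p(\cdot,\ep(\bfv^N))\|_{L^2(0,T;W^{1,2}(\mt))}^2$ provided by Theorems \ref{thm:2.1'} and \ref{thm:2.1''} together with Chebyshev's inequality, which places the sequence in a ball of $L^2(0,T;W^{1,2}(\mt))$ with arbitrarily high probability; such balls are compact (and metrizable) in $\mathcal{X}_\bfF$ by the Banach--Alaoglu theorem and reflexivity.

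Then Jakubowski's theorem delivers a probability space $(\tilde\Omega,\tilde{\mathcal F},\tilde\p)$, $\mathcal X$-valued Borel random variables $(\tilde\bfv^N,\tilde\bfF^N,\tilde p^N,\tilde\bff^N,\tilde W^N)$ with the prescribed laws, and an almost sure limit $(\tilde\bfv,\tilde\bfF,\tilde p,\tilde\bff,\tilde W)$ with the convergences in \eqref{wWS116b}. Property (ii), that the limit law is Radon, is automatic since we work on a countable product of standard spaces (or can be arranged by enlarging the probability space); property (i) is the defining property of the Jakubowski construction. The main obstacle is purely technical, namely the careful bookkeeping around the weak topology on $\mathcal{X}_\bfF$: one must ensure that the almost sure weak convergence $\tilde\bfF^N\rightharpoonup \tilde\bfF$ in $L^2(0,T;W^{1,2}(\mt))$ can be read off from the Jakubowski representation, which in turn requires that the weak topology on bounded sets is metrizable and that the $\mathcal X_\bfF$-marginal concentrates on such bounded sets. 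Both points are standard and follow from the uniform bounds already derived.
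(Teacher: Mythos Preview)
Your proposal is correct and follows essentially the same approach as the paper: the paper simply states that one applies Jakubowski's extension of Skorokhod's theorem \cite{jakubow} to the tight family from Proposition~\ref{prop:bfutightness''}, without spelling out the verification of the quasi-Polish hypotheses. Your write-up just makes explicit the routine checks (separability/reflexivity for $\mathcal{X}_\bfF$, metrizability on balls, etc.) that the paper leaves implicit.
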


The equality of laws from Proposition \ref{prop:skorokhod} implies immediately that \db{$\tilde\bfF^N=\bfF_{\tilde p^N}(\cdot,\ep(\bfv^N))$}. \db{Using the convergences from \eqref{wWS116b} we obtain
\begin{align}\label{eq:0701}
\tilde\bfF=\bfF_{\tilde p}(\cdot,\ep(\tilde\bfv)).
\end{align}
}
Also, the uniform estimates from Theorems \ref{thm:2.1'} and \ref{thm:2.1''} continue to holds on the new probability space. The proof of Theorem \ref{thm:main2} can now be completed as in Section \ref{sec:weak}.

\subsection{\db{Strong stochastically strong solutions}}
The existence of a strong pathwise solution follows now along the lines of the proof of Theorem \ref{thm:main2} with some minor modifications.
The most important change is that the classical Gy\"ongy-Krylov argument does not apply as the path space $\mathcal X$ is not Polish anymore due to the weak topology on $\mathcal X_{\bfF}$. A generalization which applies to the very general class of sub-Polish spaces (including Banach spaces with weak topologies) can be found in \cite[Chapter 2, Theorem 2.10.3]{BFHbook}.
We consider the collection of joint laws of $(\bfX^n,\bfX^m,p,\bff,W)$, where
\begin{align*}
\bfX^n&=(\bfv^{N_n},\bfF_p(\cdot,\ep(\bfv^{N_n}))),\quad
\bfX^m=(\bfv^{N_m},\bfF_p(\cdot,\ep(\bfv^{N_m}))),
\end{align*}
on the extended path space
$$\mathcal{X}^J=(\mathcal X_{\bfv}\times\mathcal X_\bfF)^2\otimes \mathcal X_{p}\otimes \mathcal X_{\bff}\times\mathcal{X}_W.$$ 
As in Proposition \ref{prop:tight}
we obtain tightness of the set
$$\{\mathcal L[\bfX^n,\bfX^m,p,\bff,W];\,n,m\in\mn\}$$
on $\mathcal{X}^J$.
Let $(\bfX^{n_k},\bfX^{m_k},p,\bff,W)$be an arbitrary subsequence. By the Jakubowski--Skorokhod theorem \cite{jakubow} we infer (for a further subsequence but without loss of generality we keep the same notation) the existence of a probability space $(\bar{\Omega},\bar{\mf},\bar{\prst})$ with a sequence of random variables $(\hat\bfX^{n_k},\check\bfX^{m_k},\bar{p}_k,\bar{\bff}_k,\bar{W}_k)$
 with
\begin{align*}
\hat\bfX^{n_k}&=(\hat\bfv^{{n_k,}},\hat\bfF^{{n_k}}),\quad k\in\mn,\\
\hat\bfX^{m_k}&=(\check\bfv^{{m_k,}},\check\bfF^{{m_k}}),\quad k\in\mn,
\end{align*}
converging almost surely in $\mathcal{X}^J$ to a random variable $(\hat\bfX,\check\bfX,\bar p,\bar\bff,\bar{W})$.
with
\begin{align*}
\hat\bfX&=(\hat\bfv,\hat\bfF),\quad
\check\bfX=(\hat\bfv,\hat\bfF).
\end{align*}
As before \db{in \eqref{eq:0701}} it follows that
\begin{align}\label{eq:911}
\hat\bfF=\bfF_{\bar p}(\cdot,\ep(\hat\bfv)),\quad \check\bfF=\bfF_{\bar p}(\cdot,\ep(\check\bfv)).
\end{align}
%
As in \eqref{eq:LIMIT} we can
show that $(\hat\bfv,\bar p,\bar \bff,\bar W)$ and $(\check\bfv,\bar p,\bar \bff,\bar W)$ are weak martingale solutions to \eqref{0.4}--\eqref{0.3} defined on the same stochastic basis $(\bar{\Omega},\bar{\mf},(\bar{\mf}_t),\bar{\prst})$.
We apply the pathwise uniqueness result from Proposition \ref{prop:unique} to conclude
\begin{align*}
\mathcal L&[\hat\bfX,\check\bfX,\bar{W}]\Big((\bfX_1,\bfX_2,p,\bff,W)\in\mathcal X^J:\;\bfX_1=\bfX_2\Big)\\
&\quad=\bar{\prst}\Big((\hat\bfv,\hat\bfF)=(\check\bfv,\check\bfF)\Big)
=\bar{\prst}(\hat{\bfv}=\check{\bfv})=1.
\end{align*}
Now, \cite[Chapter 2, Theorem 2.10.3]{BFHbook}
implies that the original sequence $\bfv^N$ defined on the initial probability space converges in probability in the topology of $\mathcal{X}_{\bfv}$ to the random variable $\bfv$. Therefore, we finally deduce that $\bfv$ is a \db{strong stochastically strong solution} to \eqref{0.4}--\eqref{0.3}. \db{Note that the pressure terms can be recovered as in \eqref{eq:pressure} (see the explanations below \eqref{eq:pressure} for the regularity of the pressure terms).}
 The proof of Corollary \ref{cor:strong} is hereby complete.
\hfill $\Box$

\end{document}